\newtheorem{theorem}{Theorem}[section]
\newtheorem{lemma}[theorem]{Lemma}
\newtheorem{claim}[theorem]{Claim}
\newtheorem{definition}[theorem]{Definition}
\newtheorem{proposition}[theorem]{Proposition}
\newtheorem{corollary}[theorem]{Corollary}
\newtheorem{remark}[theorem]{Remark}
\newtheorem{problem}[theorem]{Problem}
\def\er{\mathbb R}
\def\C{\mathcal C}
\def\H{\mathcal H}
\def\A{\mathcal A}
\def\cO{\mathcal O}
\newcommand{\graph}{\operatorname{graph}}
\newcommand{\intr}{\operatorname{int}}
\newcommand{\nor}{\operatorname{nor}}
\newcommand{\spt}{\operatorname{spt}}
\newcommand{\dist}{\operatorname{dist}}
\newcommand{\ep}{\varepsilon}
\newcommand{\eps}{\varepsilon}
\newcommand{\loc}{\operatorname{loc}}
\DeclareMathOperator*{\esslim}{ess\,lim}
\newcommand{\reach}{\operatorname{reach}}
\newcommand{\sph}{S^{d-1}}
\newcommand{\llc}{\;\halfsq\;}
\newcommand{\lrc}{\;\ihalfsq\;}
\def\halfsq{\hbox{\kern1pt\vrule height 7pt\vrule width6pt height 0.4pt depth0pt\kern1pt}}
\def\ihalfsq{\hbox{\kern1pt \vrule width6pt height 0.4pt depth0pt
                   \vrule height 7pt \kern1pt}}
\def\eqn#1$$#2$${\begin{equation}\label#1#2\end{equation}}
\begin{document}
\title[Normal cycles of sets with d.c.\ boundary]{Normal cycles and curvature measures of sets with d.c.\ boundary}
\author{Du\v san Pokorn\'y \and Jan Rataj}
\thanks{The authors were supported by a cooperation grant of the Czech and the German science foundation, GA\v CR project no.\ P201/10/J039}
\address{Charles University, Faculty of Mathematics and Physics, Sokolovsk\'a 83, 18675 Pra\-ha~8, Czech Republic}
\email{dpokorny@karlin.mff.cuni.cz, rataj@karlin.mff.cuni.cz}

\begin{abstract}
We show that for every compact domain in a Euclidean space with d.c. (delta-convex) boundary there exists a unique Legendrian cycle such that the associated curvature measures fulfil a local version of the Gauss-Bonnet formula. This was known in dimensions two and three and was open in higher dimensions. In fact, we show this property for a larger class of sets including also lower-dimensional sets. We also describe the local index function of the Legendrian cycles and we show that the associated curvature measures fulfill the Crofton formula.
\end{abstract}

\keywords{d.c.\ function, subgradient, Monge-Amp\`ere function, aura, curvature measure, Legendrian cycle, Gauss-Bonnet formula}
\subjclass[2000]{26B25, 53C65}
\maketitle
\section{Introduction}

The goal of extending the notion of curvature to non-smooth sets (with singularities) belongs to important tasks of geometry for decades. We consider here only subsets of the Euclidean space $\er^d$, though some approaches can be transferred to the Riemannian setting. It turned out that curvature measures can be derived from a more complex structure called normal cycle; this idea can be found by Sulanke \& Wintgen \cite{SW} for smooth sets, Z\"ahle \cite{Z86} for sets with positive reach, Fu \cite{Fu1} for more general sets, and later developed by others.

To describe the basic idea, consider a full-dimensional compact subset $A$ of $\er^d$ with $C^2$-smooth boundary, and let $\nor A$ be its unit normal bundle, i.e., $\nor A$ consists of pairs $(x,n)$, where $x$ is a boundary point of $A$ and $n$ is the unit outer normal vector to $A$ at $x$. The normal cycle $N_A$ of $A$ is the $(d-1)$-dimensional current which is given by integrating over the oriented manifold $\nor A$, i.e.,
$$N_A(\phi)=\int_{\nor A}\phi=\int_{\nor A}\langle\xi_A,\phi\rangle\, d\H^{d-1}$$
for any smooth $(d-1)$-form $\phi$ on $\er^{2d}$ (here $\xi_A$ is a prescribed unit simple $(d-1)$-vectorfield orienting $\nor A$ and $\H^{d-1}$ denotes the $(d-1)$-dimensional Hausdorff measure).

Given $k\in\{ 0,\ldots,d-1\}$, let $\varphi_k$ be the $k$th Lipschitz-Killing differential $(d-1)$-form on $\er^{2d}$ which can be described by
\begin{eqnarray*}
\lefteqn{\langle a^1\wedge\cdots\wedge a^{d-1},\varphi_k(x,n)\rangle}\\
&=&\cO_{d-k-1}^{-1}\sum_{\sum_i\sigma(i)=d-1-k}\langle\pi_{\sigma(1)}a^1\wedge\cdots\wedge\pi_{\sigma(d-1)}a^{d-1}\wedge n,\Omega_d\rangle,
\end{eqnarray*}
where $a^i$ are vectors from $\er^{2d}$, $\pi_0(x,n)=x$ and $\pi_1(x,n)=n$ are coordinate projections, the sum is taken over finite sequences $\sigma$ of values from $\{0,1\}$, $\Omega_d$ denotes the volume form in $\er^d$ and $\cO_{d-1}=\H^{d-1}(\sph)=2\pi^{d/2}/\Gamma(\frac d2)$. 
Note that, in particular, $\varphi_0$ is an $\cO_{d-1}^{-1}$-multiple of the $\pi_1$-pull-back of the volume form $n\lrc\Omega_d$ on $\sph$:
$$\varphi_0=\cO_{d-1}^{-1}\pi_1^{\#}(n\lrc\Omega_d).$$

Integrating $\varphi_k$ over $\nor A$ yields the $k$th (total) curvature of $A$, which can also be expressed as the integral of the $k$th symmetric function of principal curvatures of $A$:
$$N_A(\varphi_k)=\int_{\nor A}\varphi_k=C_k(A).$$
For completeness, we define also $C_d(A)=\H^d(A)$.
The $k$th curvature measure of $A$, $C_k(A,\cdot)$, is obtained by localizing with a Borel set $F\subset\er^d$
$$C_k(A,F)=(N_A\llc(F\times\er^d))(\varphi_k)=\int_{\nor A}({\bf 1}_F\circ\pi_0)\varphi_k.$$

In case of sets with singularities, the normal direction need not be determined uniquely. A useful and well tractable set class containing both smooth sets and closed convex sets is the family of sets with positive reach (i.e., sets for which any point within a certain distance apart has its unique nearest point in the set). Federer \cite{Fe59} introduced curvature measures for sets with positive reach by means of a local Steiner formula, and Z\"ahle \cite{Z86} defined normal cycles for these sets.

Fu \cite{Fu1} observed that the normal cycle of a set has a tangential property called later Legendrian, and he called {\it Legendrian cycle} any closed rectifiable $(d-1)$-dimensional current in $\er^d\times\sph$ with this property (see Section~\ref{Legendrian} for exact definition). Fu also showed that the restriction of a Legendrian cycle $T$ to the Gauss curvatures form $\varphi_0$, $T\llc\varphi_0$, determines $T$ uniquely. Later \cite{Fu94} he introduced a condition on the Legendrian cycle forcing the validity of the Gauss-Bonnet formula, not only in the global version ($C_0(A,\er^d)=\chi(A)$), but also for the set $A$ intersected with halfspaces, for almost all halfspaces of $\er^d$. He showed that subanalytic sets admit such Legendrian cycles. We formulate an equivalent condition in Definition~\ref{D-NC} and call a Legendrian cycle {\it normal cycle} if this condition is satisfied.

The particular case of (full-dimensional) sets whose boundary can be represented locally as graph of a Lipschitz function (Lipschitz domain for short) was treated in \cite{RZ05}. Of course, an additional condition has to be imposed, in order that the total boundary curvature is bounded. The normal cycle was obtained by approximation with parallel sets. In a recent paper \cite{Fu2} Fu showed that if a function is {\it strongly approximable} (i.e., can be approximated well by $C^2$-smooth functions in certain sense concerning second derivatives, see Definition~\ref{strong_approx}) then it admits a second order Taylor expansion almost everywhere (which is a property close the existence of a normal cycle for the subgraph). In particular, every strongly approximable function is Monge-Amp\`ere, see Definition~\ref{MA}. Fu asked whether, in particular, delta-convex (d.c.) functions (differences of two convex functions) are strongly approximable. This is easy to see for functions of one variable and known for functions of two variables, see \cite{Fu00}. 

In Section~\ref{S_WDC} we introduce WDC sets as sublevel sets of d.c.\ functions at weakly regular values. These can be considered as natural generalizations of sets with positive reach which can be equivalently characterized as sublevel sets of semiconcave functions at weakly regular values (see \cite{Kl}). The class of locally WDC sets contain full-dimensional domains with d.c.\ boundary, as well as lower-dimensional d.c.\ surfaces or transversal unions of sets with positive reach.

We present in Section~\ref{SA} an answer the problem formulated in slightly different wording in \cite[Problem~7.3, p.~458]{AA}, \cite{Fu1} and \cite{Fu2}:

\begin{theorem} \label{T1}
Every d.c.\ function on $\er^d$ is strongly approximable and, hence, also Monge-Amp\`ere.
\end{theorem}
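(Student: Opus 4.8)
The plan is to produce the required $C^\infty$ approximants by mollifying the two convex constituents separately, and to extract from the convexity of each piece all the curvature-type estimates that the non-smooth difference no longer satisfies directly. Write $f=g-h$ with $g,h\colon\er^d\to\er$ convex, fix a standard mollifier $(\rho_\eps)_{\eps>0}$, and put $g_\eps:=g*\rho_\eps$, $h_\eps:=h*\rho_\eps$, $f_\eps:=g_\eps-h_\eps$. Then $g_\eps,h_\eps$ are smooth and convex, $f_\eps$ is smooth, $g_\eps\to g$ and $h_\eps\to h$ locally uniformly (so $f_\eps\to f$ locally uniformly and in $W^{1,1}_{\loc}$), and the Hessians $D^2g_\eps=(D^2g)*\rho_\eps$ of the smooth functions $g_\eps$ satisfy $D^2g_\eps\,\mathcal L^d\rightharpoonup D^2g$ weakly$^*$ as matrix-valued Radon measures, and likewise for $h$, whence $D^2f_\eps\,\mathcal L^d\rightharpoonup D^2g-D^2h$. (Since the clauses of Definition~\ref{strong_approx} are verified on compacta, any convention one adopts for ``d.c.\ on $\er^d$'' is harmless.)

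The first quantitative input is a uniform local bound on $\int\|D^2f_\eps\|$, and here the one-sided (convexity) information on each piece is exactly what is needed. Since $D^2g_\eps\succeq0$ pointwise, $\|D^2g_\eps(x)\|\le\operatorname{tr}D^2g_\eps(x)=(\Delta g*\rho_\eps)(x)$, and as $\Delta g$ is a nonnegative Radon measure this gives $\int_K\|D^2g_\eps\|\le(\Delta g)(K^{\eps})$, bounded uniformly for $\eps\le 1$; the same holds for $h$. Hence $\sup_\eps\int_K\|D^2f_\eps\|<\infty$, and in fact $\|D^2f_\eps\|\,\mathcal L^d\le(\Delta g+\Delta h)*\rho_\eps\,\mathcal L^d$, so the family $\{\|D^2f_\eps\|\,\mathcal L^d\}$ is tight.

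The core of the proof is to control the \emph{nonlinear} Hessian quantities --- the elementary symmetric functions $\sigma_k(D^2f_\eps)$ of the eigenvalues of $D^2f_\eps$, from which the area factor $\sqrt{\det(I+(D^2f_\eps)^2)}$ of the gradient map $x\mapsto\nabla f_\eps(x)$ is assembled --- and to prove that they converge. The device is multilinear expansion: using $\sqrt{\det(I+(D^2f_\eps)^2)}=|\det(I+iD^2f_\eps)|$ and expanding $\det(I+i(D^2g_\eps-D^2h_\eps))$ multilinearly, one writes it as a finite combination, with bounded coefficients, of mixed discriminants of the identity matrix and of the two \emph{convex} Hessians $D^2g_\eps,D^2h_\eps$, so that $|\det(I+iD^2f_\eps)|$ and each $\sigma_k(D^2f_\eps)$ are dominated on compacta by a finite sum of \emph{nonnegative} mixed discriminants of these convex Hessians. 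Each such mixed discriminant has, up to normalisation, the mass of a mixed Monge-Amp\`ere measure of the convex mollifications; this mass is bounded uniformly in $\eps$ and the measures converge weakly$^*$, by the classical weak continuity of Monge-Amp\`ere measures of locally uniformly convergent convex functions, applied to the nonnegative combinations $t_1g_\eps+t_2h_\eps+\tfrac{t_3}{2}|\cdot|^2$ and read off coefficient by coefficient (polarisation). Hence the currents $\graph(\nabla f_\eps)$ in $\er^d\times\er^d$ are closed, have uniformly bounded mass over compacta, and carry Hessian-minor measures that converge weakly$^*$; the Federer-Fleming closure theorem then yields a subsequence converging to a locally integral cycle whose action is determined by these weak limits. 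Checking the clauses of Definition~\ref{strong_approx} against these facts, and then invoking the implication ``strongly approximable $\Rightarrow$ Monge-Amp\`ere'' of \cite{Fu2} (see Definition~\ref{MA}), completes the argument.

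I expect the main obstacle to be precisely this third step in dimension $d\ge3$. In the plane only the single nonlinear invariant $\det D^2f_\eps$ occurs and its weak continuity is elementary, which is why the case $d\le2$ was already known \cite{Fu00}; for general $d$ one must handle the whole hierarchy of mixed discriminants of convex Hessians and assemble the corresponding weak-continuity and mass-boundedness statements for mixed Monge-Amp\`ere measures of convex functions. It is essential to organise the multilinear bookkeeping so that the cancellation in $f=g-h$ is respected: the crude estimate $\|D^2f_\eps\|\le\Delta g_\eps+\Delta h_\eps$ is useless for the higher minors, because powers of $\Delta g_\eps$ are not uniformly integrable near an atom of $\Delta g$, whereas in the mixed-discriminant expansion every term is individually a nonnegative, uniformly bounded, weakly$^*$ convergent Monge-Amp\`ere-type measure.
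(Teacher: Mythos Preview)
Your underlying idea --- express the Hessian minors of $f=g-h$ by polarisation in terms of Hessians of nonnegative combinations of $g$ and $h$, and then invoke the known strong approximability of convex functions --- is exactly the idea behind the paper's proof. But your execution aims at the wrong targets, and this creates a genuine gap.

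Definition~\ref{strong_approx} asks for uniform $L^1$ bounds on \emph{every} minor $\det\bigl((D^2f_\eps)_{i\in I,\,j\in J}\bigr)$ with $|I|=|J|$, including the non-principal ones with $I\neq J$. Your argument controls only the symmetric invariants $\sigma_k(D^2f_\eps)$ and the area factor $\sqrt{\det(I+(D^2f_\eps)^2)}$. For a principal minor one can indeed use that $(D^2g_\eps)_{I,I}$ and $(D^2h_\eps)_{I,I}$ are positive semidefinite and that mixed discriminants of psd matrices are nonnegative, obtaining $|\det((A-B)_{I,I})|\le\det((A+B)_{I,I})$. But for $I\neq J$ the submatrices are not symmetric, the mixed-discriminant machinery does not apply, and nothing in your outline bounds those terms. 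The detour through Federer--Fleming compactness and weak continuity of Monge--Amp\`ere measures is aimed at constructing $[df]$ directly; it does not supply the missing minor bounds, and strong approximability is what you must prove first.

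The paper closes exactly this gap with a single algebraic identity (Lemma~\ref{KL}): for \emph{arbitrary} $m\times m$ matrices $A,B$,
\[
\det(A-B)=\frac{1}{m!}\sum_{k=0}^{m}(-1)^k\binom{m}{k}\det\bigl((m-k)A+kB\bigr),
\]
obtained by reading $\det\bigl(mB+t(A-B)\bigr)$ as a degree-$m$ polynomial in $t$ and taking its $m$th finite difference. Applied with $A=(D^2g_\eps)_{I,J}$ and $B=(D^2h_\eps)_{I,J}$ (no symmetry needed), this bounds $\bigl|\det((D^2f_\eps)_{I,J})\bigr|$ by a finite sum of $\bigl|\det((D^2((m-k)g_\eps+kh_\eps))_{I,J})\bigr|$; since each $(m-k)g+kh$ is convex, these are uniformly $L^1_{\loc}$-bounded by the known strong approximability of convex functions. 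That is the whole proof: no mass estimates, no weak continuity, no compactness theorem --- just the finite-difference formula and the convex case. Your polarisation instinct was right; you only needed to apply it to each minor rather than to $\sigma_k$.
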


This rather simple observation is the cornerstone of the paper. The proof is based on a formula for determinants (Lemma~\ref{KL}) which makes it possible to find upper bounds for minors of differences of two matrices by means of those of convex combinations. 

After that, we apply the theory of auras due to Fu (\cite{Fu94}) to d.c.\ functions and we show:

\begin{theorem} \label{T2}
Any compact WDC set in $\er^d$ admits a normal cycle (in the sense of Definition~\ref{D-NC}).
\end{theorem}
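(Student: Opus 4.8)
The plan is to reduce Theorem~\ref{T2} to Fu's theory of auras \cite{Fu94}, which produces a normal cycle for any compact set possessing a Monge-Amp\`ere \emph{aura}, with Theorem~\ref{T1} supplying the Monge-Amp\`ere property that d.c.\ defining functions would otherwise lack. Let $A$ be a compact WDC set, so by definition $A=\{g\le c\}$ for some d.c.\ function $g$ on $\er^d$ and some weakly regular value $c$ of $g$; subtracting $c$ we may assume $c=0$, and we write $Z=g^{-1}(0)$ (which is $\partial A$ when $A$ is full-dimensional, and all of $A$ in the lower-dimensional case).

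The first step is to build an aura of $A$ out of $g$. Since the class of d.c.\ functions is closed under $\max$, $\min$ and addition of $C^2$ functions, one can pass from $g$ to a d.c.\ function $f$ which coincides with $\max(g,0)$ near $Z$, satisfies $f\ge 0$ and $f^{-1}(0)=A$, and is constant outside a ball containing $A$; thus the sublevel sets $A_\ep:=\{f\le\ep\}$ are compact and $\bigcap_{\ep>0}A_\ep=A$. By Theorem~\ref{T1}, $f$ is strongly approximable, hence Monge-Amp\`ere; in particular $f$ has an approximate second-order Taylor expansion $\H^d$-a.e.\ and the current carried by its gradient graph has locally finite mass. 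The weak regularity of the value $0$ provides, on a neighbourhood $U$ of $Z$, a uniform lower bound on the subgradients of $g$ (a Lipschitz escape direction); this transfers to $f$ and ensures that every sufficiently small $\ep>0$ is a regular value of $f$, so that $A_\ep$ has a well-defined normal cycle $N_{A_\ep}$ (read off from the gradient graph of $f$ over $g^{-1}(\ep)$ after normalising gradients to unit length), and that the masses of the $N_{A_\ep}$ remain uniformly bounded as $\ep\downarrow 0$. These are precisely the hypotheses of Fu's aura construction: the weak-regularity condition is what replaces, for a d.c.\ defining function, the o-minimal tameness automatically available in the subanalytic setting.

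With $f$ recognised as a Monge-Amp\`ere aura of $A$, it remains to invoke \cite{Fu94}: the $N_{A_\ep}$ converge as $\ep\downarrow 0$ to a Legendrian $(d-1)$-cycle $N_A$ in $\er^d\times\sph$, and Fu's stratified Morse argument shows that $N_A$ validates the Gauss--Bonnet formula for $A$ intersected with almost every halfspace, i.e.\ the condition of Definition~\ref{D-NC}; hence $N_A$ is a normal cycle of $A$. (Uniqueness plays no role for existence and in any case follows from the fact that $T\llc\varphi_0$ determines a Legendrian cycle $T$.) I expect the main obstacle to lie in the middle step --- verifying that a weakly regular sublevel set of a d.c.\ function genuinely fits Fu's aura framework: one must control the gradient-graph current of $f$ near $Z$, ruling out escape of mass in the gradient variable and checking the correct jump of the index across the critical value $0$, using only the weak-regularity bound together with the a.e.\ Taylor expansion granted by Theorem~\ref{T1}. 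Once this is secured, the remainder of Fu's aura theory applies essentially verbatim.
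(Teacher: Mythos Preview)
Your construction of the aura is essentially correct and matches the paper: setting $f=\max(g-c,0)$ gives a nonnegative d.c.\ function with $f^{-1}\{0\}=A$, Theorem~\ref{T1} makes it Monge--Amp\`ere, and the weak regularity of $c$ makes it a \emph{nondegenerate} aura in Fu's sense. Proposition~\ref{P_aura} then yields a Legendrian cycle $N(f,0)$ with $N(f,0)(\varphi_0)=\chi(A)$. Up to here you are on solid ground, and the ``middle step'' you flagged as the obstacle is in fact already handled by Fu's general aura machinery once nondegeneracy is in place.

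The actual gap is downstream. Definition~\ref{D-NC} asks for two further ingredients beyond being a Legendrian cycle: condition~\eqref{th} (almost all halfspaces do not touch) and the local Gauss--Bonnet relation~\eqref{E-NC}. Neither comes for free from Fu's aura theory for a general Monge--Amp\`ere aura; what you call ``Fu's stratified Morse argument'' in \cite{Fu94} is carried out specifically for subanalytic auras and relies on stratification properties that a d.c.\ function does not possess. In the paper, condition~\eqref{th} is obtained via Proposition~\ref{P_th}, which rests on the Pavlica--Zaj\'\i\v cek theorem (itself built on the Ewald--Larman--Rogers result) that the set of hyperplanes tangent to the graph of a d.c.\ function has measure zero; you do not mention this input, and there is no way around it. For~\eqref{E-NC}, the paper does not adapt any Morse-theoretic argument but instead observes that for almost every halfspace $H_{v,t}$ the function $f+g_{v,t}$ is again a nondegenerate d.c.\ aura, now for $A\cap H_{v,t}$ (Lemma~\ref{L_aura_cap_H}), so the \emph{global} Gauss--Bonnet formula from Proposition~\ref{P_aura} applied to this new aura gives $\chi(A\cap H_{v,t})$, and a slice computation (Lemma~\ref{L_tech}) identifies this with $\langle N(f,0),\pi_1,-v\rangle(H_{v,t}\times\sph)$. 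That reduction of the local statement to repeated applications of the global one is the genuine idea you are missing.
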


In particular, curvature measures can be introduced for WDC sets. An important step in the proof is the fact that the set of tangent hyperplanes to the graph of a d.c.\ function has measure zero, which was shown by Pavlica and Zaj\' i\v cek \cite{PZ} (their result is an application of some duality ideas to a deep result on directions of line segments on the boundary of a convex body by Ewald, Larman and Rogers \cite{ELR}).

The normal cycles of WDC sets fulfil (by definition) the Gauss-Bonnet formula for intersection with almost all halfspaces. We also show a local formula for the index function of the normal cycle which imply the additivity (with respect to unions and intersections). 

Finally, we show in Section~\ref{S-CF} the Crofton formula for WDC sets. The set $\A^d_m$ of affine $m$-subspaces of $\er^d$ with invariant measure $\mu^d_m$ are introduced in Section~\ref{S-prel}.

\begin{theorem}[Crofton formula] \label{T3}
Let $A$ be a compact WDC set. Then, for any integers $0\leq k\leq m\leq d$,
$$\int_{\A^d_m}C_k(A\cap E)\,\mu^d_m(dE)=\beta^d_{d+k-m,m}C_{d+k-m}(A),$$
where
$$\beta^d_{i,j}=\frac{\Gamma(\tfrac{i+1}2)\Gamma(\tfrac{j+1}2)}{\Gamma(\tfrac{d+1}2)\Gamma(\tfrac{i+j-d+1}{2})}.$$
\end{theorem}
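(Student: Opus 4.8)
The plan is to deduce the Crofton formula for WDC sets from the classical Crofton formula for sets with positive reach (or for smooth sets) by an approximation argument, combined with the structure of the normal cycle established in Theorem~\ref{T2}. First I would recall the integral-geometric interpretation of the curvature measures via the normal cycle: for a WDC set $A$ and a Borel set $F$, $C_k(A,F)=(N_A\llc(F\times\er^d))(\varphi_k)$. The key observation is that intersecting with an affine subspace $E\in\A^d_m$ is, for almost all $E$, a ``transversal'' operation: the intersection $A\cap E$ is again a WDC set (one restricts the defining d.c.\ function to $E$, and Sard-type arguments for d.c.\ functions — already used in Section~\ref{S_WDC} — guarantee that the restricted value stays weakly regular for $\mu^d_m$-a.e.\ $E$), and moreover the normal cycle $N_{A\cap E}$ can be obtained from $N_A$ by a slicing/projection formula analogous to the smooth case. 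This is the technical heart of the argument.

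Second, I would set up the computation on the level of currents. For a fixed subspace direction, write $E=L+t$ with $L\in G^d_m$ a linear $m$-subspace and $t\in L^\perp$. The normal cycle $N_A$ lives in $\er^d\times\sph$; the normal cycle of the slice $N_{A\cap E}$ lives in $E\times S_L$, where $S_L$ is the unit sphere of the directions in $L$. One expects a formula expressing $N_{A\cap E}$ as the pushforward under the map $(x,n)\mapsto(x,\pi_L n/|\pi_L n|)$ of a slice of $N_A$ by the ``height'' function $x\mapsto\pi_{L^\perp}(x)$ at the value $t$, at least for a.e.\ $(L,t)$; this kind of slicing formula for normal cycles is standard for positive reach sets and extends to Legendrian cycles because the Legendrian condition is preserved under slicing and the resulting current is still a cycle (boundaryless) for a.e.\ slice. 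Having this, the integral $\int_{\A^d_m}C_k(A\cap E)\,\mu^d_m(dE)$ becomes, after unfolding the invariant measure as $\int_{G^d_m}\int_{L^\perp}(\cdots)\,d\H^{d-m}(t)\,dL$ and using the coarea/slicing identity, an integral over $\nor A$ of a certain $(d-1)$-form obtained by averaging the pulled-back Lipschitz–Killing forms $\varphi_k$ over $G^d_m$. The pointwise computation of this average — a purely linear-algebraic identity on $\Lambda_{d-1}(\er^d\times\er^d)$ — produces exactly the constant $\beta^d_{d+k-m,m}$ times $\varphi_{d+k-m}$; this constant-identification step is where the Gamma-function factors come from, and it is the same computation that appears in the classical Crofton formula (e.g.\ in Federer's or Schneider--Weil's treatment), so I would cite it rather than redo it.

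Alternatively, and perhaps more cleanly, I would avoid the slicing-of-currents machinery and instead argue by approximation: approximate the defining d.c.\ function $f$ by the $C^2$ functions $f_\eps$ furnished by strong approximability (Theorem~\ref{T1}), so that the sublevel sets $A_\eps$ are (for a.e.\ level, generically) $C^2$ domains with $N_{A_\eps}\to N_A$ in the flat/weak sense of currents, with uniformly bounded masses (this convergence is exactly what underlies the proof of Theorem~\ref{T2}). The Crofton formula holds for each $A_\eps$ by the smooth classical result. One then passes to the limit on both sides: the right-hand side $C_{d+k-m}(A_\eps)\to C_{d+k-m}(A)$ by weak convergence of normal cycles (pairing against the fixed form $\varphi_{d+k-m}$, using that $\varphi_{d+k-m}$ is a bounded smooth form and the masses are uniformly bounded near the compact carrier), and on the left-hand side one uses dominated convergence in $E$ together with the fact that for $\mu^d_m$-a.e.\ $E$ one has $C_k(A_\eps\cap E)\to C_k(A\cap E)$ — again because the slices $A_\eps\cap E$ are generic smooth approximants of the WDC set $A\cap E$ and their normal cycles converge with locally uniformly bounded mass. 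The uniform mass bound needed for dominated convergence comes from the global bound $\int_{\A^d_m}\mathbf{M}(N_{A_\eps\cap E})\,\mu^d_m(dE)\le c\,\mathbf{M}(N_{A_\eps})\le C<\infty$, which is itself an instance of a Crofton-type inequality for the mass.

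The main obstacle will be the measurability and a.e.-transversality bookkeeping: making precise that for $\mu^d_m$-a.e.\ $E$ the intersection $A\cap E$ is WDC (so that $C_k(A\cap E)$ is even defined), that $E\mapsto C_k(A\cap E)$ is $\mu^d_m$-measurable, that the normal cycle of the slice coincides with the slice of the normal cycle, and that the approximating slices $A_\eps\cap E$ behave well for a.e.\ $E$ simultaneously in $\eps$. I expect to handle transversality via a Sard-type theorem for d.c.\ (or Monge--Ampère) functions on the product domain, noting that the set of ``bad'' pairs $(E,$ value$)$ is null — here the Pavlica--Zaj\'{\i}\v{c}ek result on the measure-zero set of tangent hyperplanes, already invoked for Theorem~\ref{T2}, is the right tool, applied to $f$ restricted to subspaces. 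Once transversality and measurability are in place, both the slicing approach and the approximation approach reduce the theorem to the classical pointwise linear-algebra identity that yields $\beta^d_{i,j}$, which I would quote from the positive-reach literature. I would present the approximation version as the main line of proof, since it reuses infrastructure already built in Sections~\ref{SA}--\ref{S_WDC}, and mention the slicing interpretation as a remark.
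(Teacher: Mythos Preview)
Your slicing approach is essentially what the paper does. The paper proves the case $m=d-1$ by showing (Lemma~\ref{CL1}) that for almost all hyperplanes $E_{v,t}$ one has $N(f|E_{v,t},0)=(-1)^d(f_v)_\#\langle N(f,0),g_v,t\rangle$, exactly the ``pushforward of a slice by the height function'' you describe, and then (Lemma~\ref{CL2}) computes the rotational average of the pulled-back forms $f_v^\#(g_v^\#\Omega_1\wedge\varphi_k^{(v)})$ pointwise, obtaining $\beta^d_{k+1,d-1}\varphi_{k+1}$; the general $m$ follows by induction via the flag formula~\eqref{int_geom}. So your first outline is correct and matches the paper, except that the paper carries out the linear-algebra computation explicitly rather than citing it, and reduces to codimension one rather than handling all $m$ at once.

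The approximation approach you intend as the main line has a real gap. Strong approximability (Theorem~\ref{T1}) gives $C^2$ functions $f_\eps\to f$ in $L^1_{\loc}$ with uniform bounds on Hessian minors; this is used in the paper only to conclude that $f$ is Monge--Amp\`ere, so that Fu's aura machinery (Proposition~\ref{P_aura}) applies. It does \emph{not} establish that normal cycles of sublevel sets of $f_\eps$ converge flatly to $N_A$, and the proof of Theorem~\ref{T2} nowhere uses such a convergence---it proceeds via slices $N(f,r)$ of the single current $[df]$, not via smooth approximants. Making your approximation route work would require proving $N_{A_\eps}\to N_A$ and, worse, the slicewise convergence $N_{A_\eps\cap E}\to N_{A\cap E}$ for a.e.\ $E$ with an integrable mass dominator; none of this is available from the paper's infrastructure, and the ``Crofton-type inequality for the mass'' you invoke is itself an unproved statement of roughly the same strength as the theorem. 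Make the slicing argument the main line; the approximation sketch, as it stands, is not a proof.
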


Again, we use the fact that the set of hyperplanes tangent to the graph of a d.c.\ function has $d$-dimensional measure zero.

One of the problems when constructing the normal cycle for WDC sets is that we do not know whether the corresponding unit normal bundle is a rectifiable set. The positive result on rectifiability would imply, for example, the validity of the principal kinematic formula. It seems that rectifiability of the unit normal bundle is equivalent to the rectifiability of the set of segments on the boundary of a convex body studied in \cite{ELR} and this appears to be an interesting open problem. 

Analysing the proof of the Principal kinematic formula in \cite[Corollary~2.2.2]{Fu94}, one sees that a weaker property than rectifiability suffices. Let $A,B$ be two compact WDC sets with d.c.\ auras $f,g$, respectively, and let $\nor(f,0), \nor(g,0)$ be the two unit normal bundles (see Definition~\ref{aura}). We conjecture that $\H^{2d-1}(\nor(f,0)\times\nor(g,0))=0$. This would already imply the validity of the Principal kinematic formula for $A$ and $B$.

\section{Preliminaries}  \label{S-prel}

The basic setting will be the $d$-dimensional Euclidean space $\er^d$ with scalar product ``$\cdot$'' and norm $|\cdot|$, $\sph$ stands for the unit sphere. By $B(x,r)$ we denote the closed ball of centre $x$ and radius $r$. The $k$-dimensional Hausdorff measure will be denoted by $\H^k$.

If $v\in\sph$ and $t\in\er$, let $H_{v,t}$ denote the halfspace $\{ y\in\er^d:\, y\cdot v\leq t\}$.
$\H^{d}\llc(\sph\times\er)$ induces a natural measure on the family of halfspaces (through the mapping $(v,t)\mapsto H_{v,t}$) and under $H_{v,t}\mapsto\partial H_{v,t}$ we obtain a natural measure on the family of hyperplanes in $\er^d$. These measures should be always understood when speaking about ``almost all halfspaces'' or ``almost all hyperplanes''. 

Let $G(d,i)$ be the Grassmannian of $i$-dimensional linear subspaces of $\er^d$ with unique invariant probability measure $\nu_i^d$, and let ${\mathcal A}_i^d$ denote the set of all $i$-dimen\-si\-onal affine subspaces of $\er^d$ with invariant measure $\mu_i^d$ given by
$$\mu_i^d(U)=\int_{G(d,i)}\lambda^{d-i}\{z\in L^\perp:\, L+z\in U\}\, \nu_i^d(dL)$$
for any Borel subset $U$ of ${\mathcal A}_i^d$.

We shall use the fact that the measure $\mu_i^d$ can equivalently be given as
\begin{equation}  \label{int_geom}
\mu_i^d(U)=\int_{{\mathcal A}_{j}^d}\int_{{\mathcal A}_i^{j}(E)}
\mu_i^{j}(U\cap{\mathcal A}_i^{j}(E))\, \mu_{j}^d(dE)
\end{equation}
whenever $i\leq j\leq d-1$, 
where ${\mathcal A}_i^{j}(E)$ is the set of all affine $i$-subspaces of $E\in{\mathcal A}_{j}^d$
This is a well-known fact from integral geometry and follows e.g.\ from \cite[Theorem~7.1.2]{SnWe}.

Let $U\subset\er^d$ be open and $f:U\to\er$ locally Lipschitz. By Rademacher's theorem, the differential $df(x)$ (and gradient $\nabla f(x)$) exists at $\H^d$-almost all $x\in U$. At any $x\in U$, the (Clarke) subdifferential of $f$ at $x$, $\partial^*f(x)$, is defined to be the closed convex hull of the set of all accumulation points of gradients of $f$ at regular points converging to $x$. Clearly, the graph of the subdifferential 
$$\graph\partial^*f = \{ (x,u):\, x\in U,\, u\in\partial^*f(x)\}$$
is a closed subset of $\er^d\times\er^d$.

\section{Delta convex functions and WDC sets}  \label{S_WDC}

A real function $f$ defined on a convex set is called d.c.\ (or delta-convex) when it can be expressed as a difference of two convex (concave) functions.
A function $f$ defined on an open set $U$ is said to be locally d.c., if for every $x\in U$ there is a convex set $K_x\subset U$ such that $f|_{K_{x}}$ is d.c.
Note that every d.c.\ function is locally Lipschitz and that every semi-convex (or semi-concave) function is also d.c.
A mapping $F:\er^d\to\er^k$ is called a {\it d.c.\ mapping} if every component of $F$ is a d.c.\ function.

It is well known that for two d.c.\ functions $f,g$, not only $f+g$, but also $fg$, $\max(f,g),$ $\min(f,g)$ are d.c.
Also, if $F,G$ are two d.c. mappings and $F\circ G$ makes sense then $F\circ G$ is a d.c. mapping as well.
For more details see \cite{VZ} or \cite{Har}.

Let $f$ be a Lipschitz function on $\er^d.$ A real number $c$ is called a {\it weakly regular value} if there is an $\eps>0$ such that for every $c<f(x)<c+\eps$ and every $v\in\partial^*f(x)$ the inequality $|v|\geq\eps$ holds. 

\begin{definition}\label{WDC} \rm
A compact set $A\subset\er^d$ is called {\it WDC} ({\it weakly delta-convex}) if there is a d.c.\ function $f:\er^d\to\er$ with a weakly regular value $c$ such that $A=f^{-1}((-\infty,c])$.
\end{definition}

\begin{remark} \rm
\begin{enumerate}
\item[(i)] Any compact set $A\subset\er^d$ with positive reach \cite{Fe59} is WDC. Indeed, the distance function $d_A(x)=\dist(x,A)$ is semiconcave (cf.\   \cite[Satz~(2.8)]{Kl}), hence, also d.c., and $0$ is a weakly regular value of $d_A$ since $d_A$ has unit gradient at all points $x$ with $0<d_A(x)<\reach(A)$, see \cite[Theorem~4.8]{Fe59}. In particular, compact convex sets are WDC.
\item[(ii)] If a compact set $A\subset\er^d$ is represented as a finite union of sets with positive reach which intersect transversally (see \cite{Z87}) then $A$ is WDC. This follows from the fact that, given two sets $A,B$ with positive reach which intersect transversally, their distance functions $d_A,d_B$ are nondegenerate auras (see Definition~\ref{aura}) that do not touch and Theorem~\ref{T_additivity} implies that $A\cup B$ is WDC.
\item[(iii)]
If $\Psi$ is a $\C^2$ diffeomorphism and $A$ a WDC set with witnessing function $f$, then
$\Psi(A)$ is WDC as well, with witnessing function $f\circ\Psi^{-1}$ (see also Theorem~\ref{T-diff}).
\end{enumerate}
\end{remark}
We call a set $A\subset\er^d$ {\it locally WDC} if for every $x\in A$ there is $U_x$, an open neighborhood of $x$, and a WDC set $A_x$ such that $A\cap U_x=A_x\cap U_x.$
It is not clear whether a locally WDC set is immediately WDC (cf. Problem~\ref{locwdc}), but due to the local character of the notion "being normal cycle", we can prove that compact locally WDC sets admit normal cycles just from the fact that compact WDC sets do (see Theorem~\ref{locth}).

\subsection{Delta-convex domains and surfaces}
A {\it d.c.\ domain} in $\er^d$ is a set which can be locally represented as subgraph of a d.c.\ function.
Following \cite[Definiton 2.11]{RZaj} we will call a set $M\subset\er^d$ a {\it $k$-dimensional d.c. surface} if for every $x\in M$ there is an open set $U_x$, a subspace $A_x\in G(d,k)$ and a d.c.\ mapping $\phi_x:A_x\to A_x^{\bot}$ such that 
$$
M\cap U_x=\{u+\phi_x(u):u\in A_x\}\cap U_x.
$$

\begin{proposition}
\begin{enumerate}
\item[{\rm (i)}] Each d.c.\ domain in $\er^d$ is a locally WDC set.
\item[{\rm (ii)}] Every $k$-dimensional d.c.\ surface in $\er^d$ is a locally WDC set.
\end{enumerate}
\end{proposition}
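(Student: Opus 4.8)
The plan is to verify the ``locally WDC'' property pointwise: for each point of the set I will produce an open neighbourhood and a compact WDC set agreeing with our set on it. In every case the witnessing d.c.\ function is a maximum of a few explicit d.c.\ functions -- one describing the set locally, one or two serving as ``caps'' forcing the sublevel set to be bounded -- and the real work is to check that $0$ is a weakly regular value of this maximum. For part (i), let $D$ be a d.c.\ domain and $x\in D$. If $x\in\intr D$, pick a closed ball $A_x\subset\intr D$ with $x$ in its interior; it is convex, hence WDC, and $D\cap U_x=U_x=A_x\cap U_x$ for $U_x:=\intr A_x$. If $x\in\partial D$, then after a rigid motion $D$ coincides, on a neighbourhood $W$ of $x=(y_0,g(y_0))$, with the closed subgraph $\{(y,t)\in\er^{d-1}\times\er:\ t\le g(y)\}$, where $g$ is d.c.\ near $y_0$. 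Write $g=g_1-g_2$ with $g_1,g_2$ convex and Lipschitz on a small closed ball $\bar B(y_0,\rho)$, and extend each $g_i$ to a finite Lipschitz convex function $\hat g_i$ on $\er^{d-1}$ (e.g.\ as the supremum of its affine minorants of slope at most $\lip(g_i)$ over $\bar B(y_0,\rho)$, which is finite and equals $g_i$ on $B(y_0,\rho)$; if $g$ is already d.c.\ on $\er^{d-1}$ this is superfluous), and set $\tilde g:=\hat g_1-\hat g_2$. With $m:=\min_{\bar B(y_0,\rho)}g-1$, put
$$f(y,t)=\max\bigl(t-\tilde g(y),\ |y-y_0|-\tfrac{\rho}{2},\ m-t\bigr).$$
By stability of d.c.\ functions under $\max$ and addition (and since $|\cdot|$ is convex), $f$ is d.c.\ on $\er^d$; the set $A_x:=f^{-1}((-\infty,0])$ is closed and bounded, hence compact; and for $U_x$ a sufficiently small neighbourhood of $x$ (so that both caps are inactive on $U_x$ and $\tilde g=g$ there), $A_x\cap U_x=D\cap U_x$.

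For part (ii), let $M$ be a $k$-dimensional d.c.\ surface and $x\in M$. After a rigid motion, $M$ coincides on a neighbourhood $W$ of $x=(u_0,\phi(u_0))$ with $\graph\phi=\{(u,\phi(u)):u\in\er^k\}\subset\er^k\times\er^{d-k}$ for a d.c.\ map $\phi$. Fix $r>0$ and set $f(u,z)=\max\bigl(|z-\phi(u)|,\ |u-u_0|-r\bigr)$. Because the Euclidean norm is convex, hence d.c., and $(u,z)\mapsto z-\phi(u)$ is a d.c.\ mapping, their composition $|z-\phi(u)|$ is d.c.; hence so is $f$. Then $A_x:=f^{-1}((-\infty,0])=\{(u,\phi(u)):|u-u_0|\le r\}$ is compact, and $A_x\cap U_x=M\cap U_x$ for $U_x$ a sufficiently small neighbourhood of $x$.

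It remains to check that $0$ is a weakly regular value of each $f$ above; this is the crux. I will use the standard Clarke subdifferential calculus: $\partial^*(\max_i f_i)(z)\subseteq\co\bigcup_{i\in I(z)}\partial^*f_i(z)$ over the active indices $I(z)$; $\partial^*$ of a $C^1$ function is its gradient; and $\partial^*$ of a function not depending on certain coordinates has vanishing component in those. In (i), the branch $t-\tilde g(y)$ has Clarke subdifferential inside $(-\partial^*\tilde g(y))\times\{1\}$, so its $t$-component is always $1$; the branch $m-t$ has gradient $(0,-1)$; and where $|y-y_0|-\tfrac\rho2$ is active it is positive, hence smooth with unit gradient $(\tfrac{y-y_0}{|y-y_0|},0)$, of zero $t$-component. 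One checks that the first and third branches cannot be simultaneously active on $\{0<f<\eps\}$ for small $\eps$ (this would force $\tilde g(y)<m$ at a point with $|y-y_0|<\rho$, where $\tilde g=g>m$); so on $\{0<f<\eps\}$ at most two branches are active, at most one of which moves in the $t$-direction, and an elementary estimate (bounding $|\partial^*\tilde g|$ by $\lip(\tilde g)$) then gives a positive lower bound for $|v|$, $v\in\partial^*f(y,t)$.

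In (ii), the key observation is that on the open set $\{z\ne\phi(u)\}$ the unit vector $\tfrac{z-\phi(u)}{|z-\phi(u)|}$ is locally constant, so every accumulation point of gradients of $|z-\phi(u)|$, hence every element of its Clarke subdifferential there, has $z$-component equal to this fixed unit vector; combining this with the branch $|u-u_0|-r$ (gradient of zero $z$-component) and a bound $\lip(\phi)\le L$ on the relevant ball, the same type of elementary estimate yields a positive lower bound for $|v|$, $v\in\partial^*f(u,z)$, on $\{0<f<\eps\}$. Thus $0$ is a weakly regular value in both cases, $A_x$ is WDC, and the proposition follows. Everything besides this last estimate is routine, relying only on the elementary calculus of d.c.\ functions and the local Lipschitz continuity they carry; the single point needing care is the choice of caps ensuring that the Clarke subdifferential of the maximum cannot approach $0$ near the zero level set, i.e.\ controlling convex combinations of subgradients coming from distinct active branches.
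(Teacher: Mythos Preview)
Your argument is correct and follows the same strategy as the paper: exhibit a d.c.\ function vanishing exactly on the set and check that $0$ is a weakly regular value. The paper's version is shorter on two counts. First, it omits the caps, taking simply $h(x)=\max\bigl(0,\,x\cdot e-f(x-(x\cdot e)e)\bigr)$ (in your coordinates, $\max(0,\,t-g(y))$) and leaving the passage to a \emph{compact} WDC set implicit; your explicit compactification with the two extra branches is a legitimate way to fill that in. Second, for weak regularity the paper simply observes that on $\{h>0\}$ the directional derivative of $h$ in the direction $e$ is identically $1$, so every Clarke subgradient there has $e$-component $1$ and hence norm at least $1$; this one-line observation replaces your case analysis of active branches and their convex combinations, though your analysis is also valid.

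One small slip in part~(ii): the unit vector $\dfrac{z-\phi(u)}{|z-\phi(u)|}$ is not ``locally constant'' on $\{z\neq\phi(u)\}$, only continuous there. Continuity is what you actually need: at a differentiability point $(u',z')$ near $(u_0,z_0)$ the $z$-component of $\nabla|z-\phi(u)|$ equals $\dfrac{z'-\phi(u')}{|z'-\phi(u')|}$, and this converges to $\dfrac{z_0-\phi(u_0)}{|z_0-\phi(u_0)|}$, so every element of $\partial^*|z-\phi(u)|(u_0,z_0)$ has that fixed unit vector as its $z$-component. Your conclusion is correct; only the stated reason should be amended.
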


\begin{proof}
Let $E$ be a linear subspace of dimension $d-1$ in $\er^d$ and let $f:E\to\er$ be a d.c. function. 
Without any loss of generality we can suppose that $f$ is Lipschitz.
Let $e\in S^{d-1}$ be a fixed vector orthogonal to $E$. 
To prove (i) we need to prove that the set $A:=\{u+ve:u\in E,\,v\leq f(u)\}$ is locally WDC.
Consider the function $h:\er^d\to\er$ defined as 
$$
h(x):=\max\left(0,x\cdot e-f(x-(x\cdot e)e)\right).
$$
Then $h$ is d.c. and also $h(x)=0$ if and only if $x\in A$.
Moreover, the directional derivative of $h$ in the direction $e$ is equal to $1$ and therefore (using Lipschitzness of $f$) we obtain that $0$ is a weakly regular value of $h.$

Part (ii) can be proved similarly. 
\end{proof}

\section{Legendrian and normal cycles}  \label{Legendrian}
We follow the notation and terminology from the Federer's book \cite{Fe69}.
Given an open subset $U$ of $\er^d$ (or, more generally, of a $d$-dimensional smooth submanifold of a Euclidean space) and $0\leq k\leq d$ an integer, let ${\bf I}_k(U)$ denote the space of $k$-dimensional integer multiplicity rectifiable currents in $U$. Each current $T\in{\bf I}_k(U)$ can be represented by integration as
\begin{equation} \label{current}
T=(\H^{k}\llc W(T))\wedge \iota_Ta_T,
\end{equation}
where $W(T)$ is a $(\H^{k},k)$-rectifiable subset of $U$ (``carrier'' of $T$), $a_T$ is a unit simple tangent $k$-vectorfield of $W(T)$ and $\iota_T$ is an integer-valued integrable function over $W(T)$ (``index function'') associated with $T$. Of course, the carrier $W(T)$ is not uniquely determined and need not be closed, in contrast with the support $\spt T$ which is closed by definition, but it is not clear whether it retains the rectifiability property.

The mass norm ${\bf M}(T)$ of a current $T$ is defined as the supremum of values $T(\phi)$ over all differential forms $\phi$ with $|\phi|\leq 1$. For a compact set $K\subset U$, the flat seminorm ${\bf F}_K(T)$ of $T$ is the supremum of $T(\phi)$ over all forms $\phi$ with $\spt\phi\subset K$, $|\phi|\leq 1$ and $|d\phi|\leq 1$.
The topology of flat convergence, i.e., convergence in flat seminorms ${\bf F}_K$ (denoted $T=(F)\lim_i T_i$) is often used for currents, see \cite{RZ01}. This implies the weak convergence ($T_i(\phi)\to T(\phi)$ for any smooth form $\phi$) and is, in fact, equivalent to it if $T_i,T$ are cycles with uniformly bounded mass norms, see \cite[Theorem~8.2.1]{KP08}. 

\begin{definition} \rm
A {\it Legendrian cycle} is an integer multiplicity rectifiable $(d-1)$-current $T\in{\bf I}_{d-1}(\er^d\times S^{d-1})$ with the properties:
\begin{eqnarray}
&&\partial T=0\quad (T\text{ is a cycle}),\\
&&T\llc\alpha=0\quad (T\text{ is Legendrian}),
\end{eqnarray}
where $\alpha$ is the contact $1$-form in $\er^d$ acting as 
$\langle (u,v),\alpha(x,n)\rangle=u\cdot n$ (cf.\ \cite{Fu94}). 
\end{definition}

By the fundamental uniqueness theorem due to Fu \cite[Theorem~4.1]{Fu1}, a compactly supported Legendrian current $T$ is uniquely determined by its restriction to the Gauss form $T\llc\varphi_0$ and this can be described as follows. For any $\phi\in C^\infty_c(\er^d\times\sph)$,
\begin{eqnarray}   
T(\phi\varphi_0)&=&\cO_{d-1}^{-1}\int_{\sph}\sum_{x\in\er^d}\phi(x,w)\iota_T(x,w)\, \H^{d-1}(dw)\quad  \label{Tg}  \\
&=&\sum_{x\in\er^d}\phi(x,w)\iota_T(x,n)\quad \text{for a.a.\ }w\in\sph , \label{Tg2}
\end{eqnarray}
where $\iota_T$ is the {\it index function} of $T$ (a locally $\H^{d-1}$-integrable integer-valued function on $:\er^d\times\sph$), cf.\ \cite[Proposition~4, Theorem~4]{RZ05}.

For the definition of the slice $\langle T,\pi_1,v\rangle$ of $T$ by the coordinate projection $\pi_1:(x,n)\mapsto n$ at point $v$, see \cite[Section~4.3]{Fe69}, in view of \S4.3.13. (We apply the $(d-1)$-form $n\lrc\Omega_d$ on $\sph$ defining the orientation.) The slice
$\langle T,\pi_1,v\rangle\in{\bf I}_0(\er^d\times\sph)$ is a zero-dimensional integral current (hence, a signed counting measure) for $\H^{d-1}$-almost all $v\in\sph$. The following description of the slice follows from \eqref{Tg}:
$$\langle T,\pi_1,v\rangle = (\H^0\llc (\pi_1)^{-1}\{v\})\wedge \iota_T,$$
hence,
\begin{equation}  \label{E_slice}
\langle T,\pi_1,v\rangle (\phi)=\sum_x\iota_T(x,v)\phi(x,v),\quad \phi\in C^0_c(\er^d\times\sph),
\end{equation}
for $\H^{d-1}$-almost all $v\in\sph$.

Let $v\in\sph$ and $t\in\er$ be given. We shall say that the current $T$ {\it touches} the halfspace $H_{v,t}$ (or, equivalently, that $H_{v,t}$ touches $T$) if there exists a point $x\in\er^d$ such that $(x,-v)\in\spt T$ and $x\cdot v=t$. 

Let $\chi$ denote the Euler-Poincar\'e characteristic.

\begin{definition}[Fu]  \label{D-NC}  \rm
We say that a compact set $A\subset\er^d$ admits a {\it normal cycle} $T$ if $T$ is a Legendrian cycle satisfying \begin{equation}  \label{th}
\text{almost all halfspaces do not touch }T
\end{equation} 
and
\begin{equation}  \label{E-NC}
\langle T,\pi_1,-v\rangle (H_{v,t}\times\sph)=\chi (A\cap H_{v,t})\text{ for }\H^d\text{-almost all }(v,t)\in S^{d-1}\times\er.
\end{equation}
Such a $T$ is then unique (see Remark~\ref{rem-uniq}), we write $T=N_A$ and call it the {\it normal cycle associated with} $A$.
\end{definition}

\begin{remark} \rm  \label{rem-uniq}
\begin{enumerate}
\item[(i)] The uniqueness of $T$ in \eqref{E-NC} follows from \cite[Theorem~3.2]{Fu94} and Lemma~\ref{L_equiv} below.
\item[(ii)] There are various classes of sets known to admit a normal cycle, as (compact) sets with positive reach \cite{Z86}, ${\mathcal U}_{\operatorname{PR}}$ sets \cite{RZ01} or subanalytic sets \cite{Fu94}.
\end{enumerate}
\end{remark}

Given a compactly supported Legendrian cycle $T$ satisfying \eqref{th},
the $(d-1)$-current ${\mathcal J}(T,v,t)$ was defined in \cite[p.~145]{RZ05} for almost all $(v,t)\in S^{d-1}\times\er$ (see also \cite[Theorem~3.1]{Fu94} where the current ${\mathcal I}(T,v,t)$ differs only by a constant multiple). Note that if, in particular, $T=N_A$ is the normal cycle of a compact set $A$ with positive reach, then ${\mathcal J}(N_A,v,t)$ is the restriction of the normal cycle of $A\cap H_{v,t}$. Assuming that $H_{v,t}$ does not touch $T$, we get from the definition
\begin{equation}  \label{def_J}
{\mathcal J}(T,v,t)=T\llc(\intr H_{v,t}\times\sph)+S
\end{equation}
with a current $S$ with $\spt S\subset\partial H_{v,t}\times(\sph\setminus\{-v\})$. By compactness, there exists a $\delta>0$ (possibly depending on $v,t$) such that
\begin{equation} \label{delta}
\spt S\subset\partial H_{v,t}\times\{ w\in\sph:\, v\cdot w\geq -1+\delta\}.
\end{equation}
Fu established in \cite{Fu94} the condition
\begin{equation}   \label{LGB}
{\mathcal J}(T,v,t)(\varphi_0)=\chi(A\cap H_{v,t})\text{ for }\H^d\text{-almost all }(v,t)\in S^{d-1}\times\er
\end{equation} 
relating a Legendrian cycle $T$ to a compact set $A\subset\er^d$ as a ``local Gauss-Bonnet formula''. In fact, under \eqref{th}, this condition is equivalent to \eqref{E-NC}, which is shown in the following lemma due to Joseph Fu.

\begin{lemma}  \label{L_equiv}
Let $T$ be a compactly supported Legendrian cycle satisfying \eqref{th}. Then, conditions \eqref{E-NC} and \eqref{LGB} are equivalent.
\end{lemma}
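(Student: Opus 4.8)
The plan is to show both implications by expressing the two quantities in \eqref{E-NC} and \eqref{LGB} through the index function $\iota_T$ and the geometry of the slicing, using the decomposition \eqref{def_J} together with the slice formula \eqref{E_slice}. The crucial observation is that under \eqref{th} the ``boundary part'' $S$ of ${\mathcal J}(T,v,t)$ is supported away from $\sph\setminus\{-v\}$ in the normal coordinate, so it does not interfere with the slice at $-v$.

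First I would compute $\langle T,\pi_1,-v\rangle (H_{v,t}\times\sph)$. By \eqref{E_slice}, for $\H^{d-1}$-a.a.\ $v$ this equals $\sum_{x:\,x\cdot v\le t}\iota_T(x,-v)$, i.e.\ the sum of the index values at those points of $\spt T$ lying over $-v$ whose base point is in the halfspace. Next I would compute ${\mathcal J}(T,v,t)(\varphi_0)$. Using \eqref{def_J}, ${\mathcal J}(T,v,t)(\varphi_0) = (T\llc(\intr H_{v,t}\times\sph))(\varphi_0) + S(\varphi_0)$. For the first term, since $\varphi_0=\cO_{d-1}^{-1}\pi_1^{\#}(n\lrc\Omega_d)$, the value $(T\llc(\intr H_{v,t}\times\sph))(\phi\varphi_0)$ is governed by \eqref{Tg}--\eqref{Tg2}: evaluating against the characteristic-type form that picks out $\intr H_{v,t}$ in the base and integrating over $\sph$, one obtains $\cO_{d-1}^{-1}\int_{\sph}\bigl(\sum_{x\in\intr H_{v,t}}\iota_T(x,w)\bigr)\H^{d-1}(dw)$. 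The point is that the inner sum is, for a.a.\ $w$, exactly the slice mass $\langle T,\pi_1,w\rangle(\intr H_{v,t}\times\sph)$, and a short argument (constancy of this integer-valued quantity on connected components of the set of ``good'' directions, using that $T$ is a cycle so its slices are cycles, hence their total mass over any fixed open set with boundary avoided is locally constant in $w$) shows this equals its value at $w=-v$ up to the contribution of $\partial H_{v,t}$, which vanishes in the interior. For the term $S(\varphi_0)$: by \eqref{delta}, $\spt S$ avoids $\{-v\}$ in the sphere factor, so its slice at $-v$ is zero; dually, one checks that $S(\varphi_0)$ together with the $\partial H_{v,t}$-contribution of the first term reassembles the missing boundary points, so that altogether ${\mathcal J}(T,v,t)(\varphi_0)=\langle T,\pi_1,-v\rangle(H_{v,t}\times\sph)$ for $\H^d$-a.a.\ $(v,t)$. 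Once this identity between the two \emph{left-hand sides} is established, the equivalence of \eqref{E-NC} and \eqref{LGB} is immediate since they have the same right-hand side $\chi(A\cap H_{v,t})$.

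I would organize the argument so that the real content is a single lemma: \emph{for a compactly supported Legendrian cycle $T$ satisfying \eqref{th}, and for $\H^d$-a.a.\ $(v,t)$,}
\[
{\mathcal J}(T,v,t)(\varphi_0)=\langle T,\pi_1,-v\rangle(H_{v,t}\times\sph).
\]
The bulk of the work is relating the integral $\cO_{d-1}^{-1}\int_{\sph}(\cdots)\,\H^{d-1}(dw)$ arising from $(T\llc(\cdot))(\varphi_0)$ to the single slice at $-v$; this is where one invokes the coarea/slicing formalism of \cite[Section~4.3]{Fe69} and the fact that for a cycle the slice is a cycle, so the counting function $w\mapsto \langle T,\pi_1,w\rangle(\intr H_{v,t}\times\sph)$ is (for fixed $v,t$ with $H_{v,t}$ not touching $T$) constant in $w$ on a neighborhood of $-v$ determined by the $\delta$ of \eqref{delta}, and the average over $\sph$ collapses to this constant because the defect is concentrated where the slice ``crosses'' $\partial H_{v,t}$, an $\H^d$-null set of $(v,t)$.

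The main obstacle I anticipate is the careful bookkeeping of the $\partial H_{v,t}$ (boundary) contributions: one must verify that the part of $T\llc(\intr H_{v,t}\times\sph)$ lost by replacing the closed halfspace with its interior is exactly compensated by $S(\varphi_0)$ when evaluated against $\varphi_0$, and that both the slicing and the ${\mathcal J}$-construction commute with this decomposition for $\H^d$-a.a.\ $(v,t)$ (excluding the null set where $\spt T$ meets $\partial H_{v,t}\times\{-v\}$, which is precisely ruled out by \eqref{th}). Handling this cleanly will likely require re-deriving the defining property of ${\mathcal J}(T,v,t)$ from \cite[Theorem~3.1]{Fu94} / \cite[p.~145]{RZ05} in a form adapted to applying $\varphi_0$, rather than using it as a black box.
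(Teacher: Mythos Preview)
Your overall target identity ${\mathcal J}(T,v,t)(\varphi_0)=\langle T,\pi_1,-v\rangle(H_{v,t}\times\sph)$ for a.a.\ $(v,t)$ is exactly what the paper proves, and your use of \eqref{E_slice} for the right-hand side matches the paper. But your route to the left-hand side has a real gap.

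You split ${\mathcal J}(T,v,t)(\varphi_0)$ as $(T\llc(\intr H_{v,t}\times\sph))(\varphi_0)+S(\varphi_0)$ and then apply \eqref{Tg} to the first piece, obtaining an \emph{average} $\cO_{d-1}^{-1}\int_{\sph}\bigl(\sum_{x\cdot v<t}\iota_T(x,w)\bigr)\H^{d-1}(dw)$. The integrand here is a genuine nonconstant function of $w$ over the whole sphere (it is only locally constant near $-v$), so this average is \emph{not} equal to its value at $w=-v$; your claim that ``the average over $\sph$ collapses to this constant'' is unjustified. To make your decomposition work you would need to show that $S(\varphi_0)$ exactly accounts for the discrepancy between the spherical average and the pointwise value at $-v$, and nothing in \eqref{delta} gives you that---\eqref{delta} only tells you $S$ contributes nothing to slices at $w$ near $-v$, not what $S(\varphi_0)$ is.

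The paper sidesteps this entirely by the following observation you are missing: ${\mathcal J}(T,v,t)$ is itself a Legendrian cycle, so \eqref{Tg2} applies to it directly and gives, for a.a.\ $w$, ${\mathcal J}(T,v,t)(\varphi_0)=\sum_x\iota_{{\mathcal J}(T,v,t)}(x,w)$ (a pointwise identity in $w$, not an average). Now \eqref{def_J}--\eqref{delta} say that for $w$ with $w\cdot v<-1+\delta$ the current $S$ does not meet $\pi_1^{-1}\{w\}$, so for such $w$ one has $\iota_{{\mathcal J}(T,v,t)}(x,w)=\iota_T(x,w)$ for $x\in\intr H_{v,t}$ and $=0$ otherwise; hence ${\mathcal J}(T,v,t)(\varphi_0)=\sum_{x\cdot v<t}\iota_T(x,w)$ for a.a.\ $w$ near $-v$. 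The remaining step---passing from ``for a.a.\ $w$ near $-v$'' to ``$w=-v$''---uses that the right side is locally constant in $(v,t)$ off the touching set, and a Fubini-type matching argument lets one take $w=-v$ for a.a.\ $(v,t)$. With this approach $S(\varphi_0)$ is never computed and no boundary bookkeeping is needed.
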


\begin{proof}
Applying \eqref{E_slice}, we obtain that 
\begin{equation}  
\langle T,\pi_1,-v\rangle (H_{v,t}\times\sph)=\sum_{x:\, x\cdot v\leq t}\iota_T(x,-v)
\end{equation}
for almost all $(v,t)\in\sph\times\er$.

Let $(v,t)$ be such that $T$ does not touch $H_{v,t}$ and ${\mathcal J}(T,v,t)$ is defined. Since ${\mathcal J}(T,v,t)$ is a Legendrian cycle, we get from \eqref{Tg2}
$${\mathcal J}(T,v,t)(\varphi_0)=\sum_x\iota_{{\mathcal J}(T,v,t)}(x,w)$$
for almost all $w\in\sph$. If $\delta=\delta(v,t)>0$ is such that \eqref{delta} holds then, due to \eqref{def_J},
$${\mathcal J}(T,v,t)(\varphi_0)=\sum_{x:\, x\cdot v<t}\iota_T(x,w)$$
for almost all $w\in\sph\cap\{ w:\, w\cdot v<-1+\delta\}$. 
Note that, for any $w\in\sph\cap\{ w:\, w\cdot v<-1+\delta\}$ for which the right-hand side is determined (which is the case up to a $\H^{d-1}$ zero set), the right-hand side of the last equation (and, hence, also the left-hand side) is constant on some neighbourhood of $(v,t)\in\sph\times\er$ whenever $H_{v,t}$ does not touch $T$. A standard argument yields that we can match $v$ and $w$ choosing $w=-v$ almost everywhere, i.e.,
$${\mathcal J}(T,v,t)(\varphi_0)=\sum_{x:\, x\cdot v<t}\iota_T(x,-v)=\sum_{x:\, x\cdot v\leq t}\iota_T(x,-v)$$
for almost all $(v,t)\in\sph\times\er$. A comparison with \eqref{E_slice} completes the proof.
\end{proof}

\begin{remark} \rm
It is often convenient to replace \eqref{E-NC} or \eqref{LGB} with a condition prescribing a particular form of the index function $\iota_T$ of $T$. Fu \cite{Fu1} considered the following natural form 
\begin{equation} \label{index}
\iota_T(x,n):=\lim_{r\searrow0}\lim_{s\searrow0}[\chi(A\cap B(x,r)\cap\{p:(p-x)\cdot n\leq t\})|^{t=s}_{t=-s}].
\end{equation}
We obtain in Proposition~\ref{P_local} a slightly different expression where the limits will be replaced by certain weaker forms.
\end{remark}

\section{Monge-Amp\`ere functions and auras}

We will start with the definition of Monge-Amp\`ere functions. 

\begin{definition}[Fu \cite{Fu1}]  \label{MA}  \rm
Let $f:\er^d\to\er$ be locally Lipschitz. 
We say that $f$ is {\it Monge-Amp\`ere} if there exists a (necessarily unique) cycle $[df]\in{\bf I}_d(\er^d\times\er^d)$ such that
\begin{enumerate}
\item[(i)] $[df]$ is Lagrangian, i.e., $[df]\llc\omega=0$, where $\omega$ denotes the symplectic $2$-form in $\er^d\times\er^d$ defined as $\omega=d\alpha$ ($\omega(x,y)=\sum_idx_i\wedge dy_i$ in canonical coordinates),
\item[(ii)] $\pi_0|\spt [df]$ is proper,
\item[(iii)] for any function $\phi\in C^\infty_c(\er^d\times\er^d)$,
$$[df](\phi\cdot(\pi_0)^{\#}\Omega_d)=\int_{\er^d}\phi(u,\nabla f(u))\,\H^{d}(du)$$
(note that the gradient $\nabla f$ is defined $\H^d$-almost everywhere since $f$ is locally Lipschitz).
\end{enumerate}
\end{definition}

\begin{remark} \label{prop}  \rm
\begin{enumerate}
\item Jerrard \cite{Jerr} extended the definition to functions from the Sobolev space $W^{1,1}_{\loc}$. For our purpose, the Lipschitz setting is sufficient.
\item By \cite[Theorem~2.2]{Fu1}, the support of $[df]$ is contained in the graph of the subdifferential of $f$:
\begin{equation} \label{subdif}
\spt [df]\subset \graph \partial^*f.
\end{equation}
\end{enumerate}
\end{remark}

\begin{definition}[{Fu \cite[\S1.1.1]{Fu94}}] \label{aura}\rm
Let $A\subset \er^d$ be compact. An {\it aura} for $A$ is a proper Monge-Amp\`ere function $f:\er^d\to[0,\infty)$ such that $f^{-1}\{0\}=A$. We say that the aura $f$ for $A$ is {\it nondegenerate} if there exists an open set $U\subset\er^d$ containing $A$ and such that
$$\inf\{ |u|:\, x\in U\setminus A,\,u\in\partial^*f(x)\}>0.$$
\end{definition}

Let us denote the mapping
$$\nu: (x,y)\mapsto (x,y/|y|),\quad (x,y)\in \er^d\times(\er^d\setminus\{0\}).$$
If $f$ is a nondegenerate aura for $A$, the support of $[df]\llc(U\setminus A)$ is contained in the domain of $\nu$. Define
\begin{eqnarray*}
\nor(f,0)&:=&\nu(\overline{\graph (\partial^*f|U\setminus A)}\cap (A\times\er^d)),\\
N(f,0)&:=&\nu_{\#}\Big(-\partial ([df]\llc \pi_0^{-1}(U\setminus A))\llc\pi_0^{-1}(U)\Big).
\end{eqnarray*}
Note that none of $\nor(f,0)$, $N(f,0)$ depend on the choice of the neighbourhood $U$, and that
$$\nor(f,0)\subset \nu\Big(\graph(\partial^*f|\partial A)\Big).$$
We shall call $\nor(f,0)$ {\it unit normal bundle} of $f$.

For almost all $0<r<r_0:=\inf\{ f(x):\, x\in \er^d\setminus U\}$ we can define the slices
$$N(f,r):=\nu_{\#}\langle [df],\pi_0\circ f,r\rangle.$$
The following results were shown in \cite[\S1.1]{Fu94}.

\begin{proposition}  \label{P_aura}
If $A\subset\er^d$ is compact and $f$ a nondegenerate aura for $A$ then
\begin{enumerate}
\item $N(f,0)$ is a Legendrian cycle in $\er^d\times\sph$,
\item $\spt N(f,0)\subset \nor(f,0)\subset\partial A\times\sph$,
\item $N(f,0)=(F)\esslim_{r\to 0_+}N(f,r)$,
\item $N(f,0)(\varphi_0)=\chi(A)$.
\end{enumerate}
\end{proposition}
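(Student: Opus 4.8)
The plan is to derive all four assertions of Proposition~\ref{P_aura} from the properties of the Monge--Amp\`ere current $[df]$ together with the nondegeneracy of the aura $f$, essentially following Fu \cite{Fu94}. The key observation is that the nondegeneracy hypothesis guarantees that $\spt([df]\llc\pi_0^{-1}(U\setminus A))$ lies in the domain of $\nu$, so that the pushforward $N(f,0)$ is well defined, and moreover that this support stays a positive distance away from the zero section $\er^d\times\{0\}$; this is what makes $\nu$ behave like a proper smooth map on the relevant set and lets us commute it with boundary and slicing operators.

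First I would prove (1): $N(f,0)$ is an integer multiplicity rectifiable $(d-1)$-current because it is the pushforward under the (locally Lipschitz, proper on the relevant support) map $\nu$ of the boundary of a piece of $[df]$, and boundaries of integral currents are integral; it is a cycle since $\partial\partial = 0$ kills the outer boundary and the restriction to $\pi_0^{-1}(U)$ together with the support condition $\spt N(f,0)\subset A\times\sph$ (which has no boundary escaping $U$) ensures $\partial N(f,0)=0$. The Legendrian property $N(f,0)\llc\alpha = 0$ should follow from the Lagrangian property $[df]\llc\omega=0$ of $[df]$: since $\omega = d\alpha$, the current $[df]$ restricted to $U\setminus A$ has a boundary whose pairing with $\alpha$ is controlled, and under the radial normalization $\nu$ the contact form $\alpha$ pulls back compatibly with $\alpha$ on $\er^d\times\sph$ (the radial direction is annihilated). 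This is the standard mechanism by which Lagrangian cycles in $\er^d\times\er^d$ produce Legendrian cycles in $\er^d\times\sph$, and I would invoke the computation from \cite[\S1.1]{Fu94}.

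For (2), the support inclusion $\spt N(f,0)\subset\nor(f,0)$ is immediate from the definition of $N(f,0)$ as a pushforward under $\nu$ of a current supported in $\overline{\graph(\partial^*f|U\setminus A)}\cap(A\times\er^d)$, combined with $\spt[df]\subset\graph\partial^*f$ from \eqref{subdif}; and $\nor(f,0)\subset\partial A\times\sph$ holds because points of $\nor(f,0)$ arise as limits of $(x_i,\nabla f(x_i)/|\nabla f(x_i)|)$ with $x_i\notin A$ converging to a point of $A$, forcing that limit point to lie on $\partial A$. For (3), the flat essential-limit statement $N(f,0) = (F)\esslim_{r\to 0_+}N(f,r)$ follows by applying $\nu_\#$ to the analogous slicing statement for $[df]$: by the slicing theory in \cite[\S4.3]{Fe69}, $\langle[df],\pi_0\circ f,r\rangle$ converges in the flat topology, as $r\to 0_+$ through almost all values, to (minus) the boundary of $[df]\llc\{f < r\}$ piece, and continuity of $\nu_\#$ in flat seminorms on sets with support bounded away from the bad locus transfers this. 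For (4), the Gauss--Bonnet normalization $N(f,0)(\varphi_0)=\chi(A)$, I would use part (3) to reduce to computing $N(f,r)(\varphi_0)$ for small regular $r$: the sublevel set $\{f\le r\}$ is, for a.a.\ small $r$, a set whose Gauss curvature integral is $\chi(\{f\le r\})$, and $\{f\le r\}$ deformation retracts onto $A = \{f=0\}$ for small $r$ (since $f$ is a proper nonnegative Monge--Amp\`ere function with no critical values in $(0,r_0)$ in the relevant sense), so $\chi(\{f\le r\}) = \chi(A)$; then pass to the limit.

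The main obstacle, I expect, is the rigorous justification that $\nu_\#$ commutes with the boundary operator and with slicing on the current $[df]\llc\pi_0^{-1}(U\setminus A)$ — i.e., the functoriality that underlies (1) and (3). The map $\nu$ is only locally Lipschitz and is singular along $\er^d\times\{0\}$, so one must verify carefully that the support of the relevant currents is bounded away from that singular set; this is exactly where nondegeneracy of the aura (the hypothesis $\inf\{|u| : x\in U\setminus A,\ u\in\partial^*f(x)\} > 0$) is used, and getting the quantitative statement right — that the support stays in a region where $\nu$ restricts to a proper smooth (or at least Lipschitz with the needed pushforward properties) map — is the delicate point. The Legendrian property in (1) is the other place requiring care, since it is not merely formal but uses the specific relationship $\omega = d\alpha$ and the radial structure of $\nu$; I would lean on Fu's computation rather than reproving it. All of the topological input for (4) (the retraction of sublevel sets) is standard Morse-theoretic reasoning adapted to the Monge--Amp\`ere / weakly-regular-value setting and should not present real difficulty.
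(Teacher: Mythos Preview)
The paper does not give its own proof of this proposition: it is stated with the attribution ``The following results were shown in \cite[\S1.1]{Fu94}'' and no argument is supplied. Your sketch is a faithful outline of Fu's construction there, so in that sense you are following exactly the route the paper points to; there is nothing to compare.
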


\section{Strong approximability of delta-convex functions}  \label{SA}

\begin{definition}[Fu \cite{Fu2}]  \label{strong_approx}  \rm
Let $U\subset\er^{d}$ be open and $f:U\to\er.$
We say that $f$ is {\it strongly approximable} if there is a sequence $f_{1},f_{2},...\in C^{2}(U)$ that converges to $f$ in $L_{\loc}^1(U)$ and such that
$$
\int_{K}\biggl|\det\biggl(\frac{\partial^{2}f_{k}}{\partial x_{i}\partial x_{j}}\biggr)_{i\in I,j\in J}\biggr|\leq C(f,K)
$$
for every $K\subset U$ compact and $I,J\subset\{1,\ldots,d\}$ of the same cardinality. (If $I=J=\emptyset$, the determinant should be understood as $1$.) 
\end{definition}

It is known that every locally Lipschitz and strongly approximable function is Monge-Amp\`ere and that every concave function on $\er^{d}$ is strongly approximable and therefore Monge-Amp\`ere.
(For the proofs of these facts and other information, see \cite{Fu1}.)

Fu \cite{Fu1} asked whether all d.c.\ functions are strongly approximable (and therefore Monge-Amp\`ere).
The positive answer is based on the following simple formula.

\begin{lemma}  \label{KL}
Let $A,B$ be matrices $d\times d.$
Then 
\begin{equation}\label{det}
\det(A-B)=\frac{1}{d!}\sum_{k=0}^{d}(-1)^{k}\binom{d}{k}\det((d-k)A+kB).
\end{equation}
\end{lemma}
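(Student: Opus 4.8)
The plan is to prove the identity
$$\det(A-B)=\frac{1}{d!}\sum_{k=0}^{d}(-1)^{k}\binom{d}{k}\det((d-k)A+kB)$$
by viewing both sides as polynomial expressions in the entries of $A$ and $B$ and comparing coefficients. First I would expand $\det(A-B)$ using multilinearity of the determinant in its rows: writing the $i$th row of $A-B$ as $a_i-b_i$, one obtains $\det(A-B)=\sum_{S\subseteq\{1,\dots,d\}}(-1)^{|S|}D_S$, where $D_S$ is the determinant of the matrix whose $i$th row is $b_i$ if $i\in S$ and $a_i$ if $i\notin S$. This sum has $2^d$ terms, grouped according to $|S|=j$.

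Next I would do the same expansion on the right-hand side. For fixed $k$, the matrix $(d-k)A+kB$ has $i$th row $(d-k)a_i+kb_i$, so by multilinearity $\det((d-k)A+kB)=\sum_{S\subseteq\{1,\dots,d\}}(d-k)^{d-|S|}k^{|S|}D_S$, with the same $D_S$ as above. Summing against $(-1)^k\binom{d}{k}$ and interchanging the order of summation, the coefficient of $D_S$ (with $|S|=j$) on the right-hand side becomes
$$\frac{1}{d!}\sum_{k=0}^{d}(-1)^{k}\binom{d}{k}(d-k)^{d-j}k^{j}.$$
So the identity reduces to the purely combinatorial claim that this coefficient equals $(-1)^j$ for every $j\in\{0,\dots,d\}$.

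To settle that, I would interpret $\sum_{k=0}^{d}(-1)^{k}\binom{d}{k}(d-k)^{d-j}k^{j}$ via finite differences: up to sign it is a $d$th finite difference of the function $m\mapsto m^{d-j}(d-m)^{j}$ (or of $x^{d-j}$ composed appropriately), evaluated as the standard formula $\sum_k(-1)^k\binom{d}{k}P(k)=(-1)^d d!\,[\text{leading coeff of }P]$ when $\deg P\le d$. Here $P(k)=(d-k)^{d-j}k^{j}$ is a polynomial in $k$ of degree exactly $d$ with leading coefficient $(-1)^{d-j}$, so the sum equals $(-1)^d d!\,(-1)^{d-j}=d!\,(-1)^{j}$; dividing by $d!$ gives $(-1)^j$, as required. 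Alternatively one can argue by inclusion–exclusion counting surjections, but the finite-difference route is cleanest and avoids casework; the degenerate cases $j=0$ and $j=d$ (where one factor is constant) are covered by the same formula since the degree is still $d$.

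The only mild obstacle is bookkeeping: making sure the multilinear expansions on the two sides use literally the same family of "mixed" determinants $D_S$ indexed by subsets $S$ (so that matching coefficients is legitimate), and correctly tracking the sign $(-1)^{d-j}$ in the leading coefficient of $P(k)=(d-k)^{d-j}k^j$. Neither is deep. An even slicker alternative, which I would mention, is to observe that both sides are polynomial (indeed, restrictions of polynomial maps) in the pair $(A,B)$, and to reduce to the case where $A-B$ is invertible and then to $B=0$ by the substitution $A\mapsto A(A-B)^{-1}$-type manipulation; but since the finite-difference computation is short and self-contained, I would present that as the main proof.
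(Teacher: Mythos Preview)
Your proof is correct. Both you and the paper use the finite-difference identity $\Delta^d[P](0)=d!\cdot(\text{leading coefficient of }P)$ for a polynomial $P$ of degree at most $d$, but you organize the argument differently. The paper applies the finite-difference formula directly to the single polynomial $\tilde P(t)=\det(dB+t(A-B))$: since $\tilde P$ has degree at most $d$ in $t$ with leading coefficient $\det(A-B)$, one line gives
\[
\det(A-B)=\frac{1}{d!}\Delta^d[\tilde P](0)=\frac{1}{d!}\sum_{k=0}^d(-1)^k\binom{d}{k}\tilde P(d-k)=\frac{1}{d!}\sum_{k=0}^d(-1)^k\binom{d}{k}\det((d-k)A+kB).
\]
You instead first expand both sides multilinearly into the $2^d$ mixed determinants $D_S$ and then apply the finite-difference identity to each scalar polynomial $P(k)=(d-k)^{d-j}k^j$. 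This works perfectly well and makes the bookkeeping explicit, but the paper's route bypasses the multilinear expansion altogether, so it is shorter and avoids the sign-tracking you flagged as the main obstacle.
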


\begin{proof}
The formula is a direct consequence of the formula for computing finite differences (see \cite[p. 9]{Jor})
\footnote{This fact was told to us independently by Noam D. Elkies, Darij Grinberg and Jan Mal\'y and made it possible to reduce our original proof from the first version of the paper.}
$$
\Delta^m [P](t)=\sum_{k=0}^{m}(-1)^{k}\binom{m}{k}P(t+m-k),
$$
Indeed, if we consider the polynomial $\tilde P(t)= \det(dB+t(A-B))$, then
$$
\Delta^d [\tilde P](0)=\sum_{k=0}^{d}(-1)^{k}\binom{d}{k}\tilde P(d-k)=\sum_{k=0}^{d}(-1)^{k}\binom{d}{k}\det((d-k)A+kB).
$$
On the other hand, since $\tilde P$ has degree at most $d$, the $d$th difference is in fact a constant equal to $d!a_d$, where $a_d$ is a coefficient of $\tilde P$ corresponding to $t^d$. 
To finish the proof it is now sufficient to observe that $a_d=\det (A-B).$
\end{proof}

Using formula (\ref{det}) we get the following corollary.

\begin{corollary}\label{vector}
Let $U\subset\er^{d}$ be open and $f,h$ real functions on $U$.
Suppose that every convex combination of $f$ and $g$ is strongly approximable.
Then every linear combination of $f$ and $g$ is strongly approximable as well.
\end{corollary}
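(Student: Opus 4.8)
The plan is to reduce the claim about arbitrary linear combinations to the hypothesis about convex combinations by means of the determinant identity in Lemma~\ref{KL}. Suppose $g=\lambda f+\mu h$ is a linear combination with $\lambda,\mu\in\er$; by Definition~\ref{strong_approx} I need to produce $C^2$ approximations $g_k\to g$ in $L^1_{\loc}(U)$ whose minors have locally bounded $L^1$ norm. The first step is to pick, for each convex combination $c=t f+(1-t)h$ that actually occurs, a sequence of $C^2$ functions witnessing its strong approximability. The subtlety is that the identity \eqref{det} mixes several convex combinations at once, so I want to choose the approximating sequences consistently; the clean way is to take the \emph{same} mollification parameter, i.e.\ set $f_k=f*\rho_{1/k}$ and $h_k=h*\rho_{1/k}$ with a fixed mollifier $\rho$, and observe that then $(tf+(1-t)h)_k = t f_k+(1-t)h_k$, so a single family of mollifications simultaneously witnesses strong approximability of all convex combinations once we know each one is strongly approximable (here one uses that strong approximability of a function is witnessed by \emph{its own} mollifications, a fact implicit in \cite{Fu1} and easy to extract).

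Next I treat the affine combination $\lambda f+\mu h$. Write it as a scaling of a genuine convex combination: if $\lambda,\mu$ have the same sign, say $\lambda,\mu\ge 0$ with $s:=\lambda+\mu>0$, then $\lambda f+\mu h = s\big(\tfrac{\lambda}{s}f+\tfrac{\mu}{s}h\big)$ is $s$ times a convex combination, hence strongly approximable (scaling a function by a constant scales its minors by powers of that constant, leaving the $L^1$ bounds finite). The genuinely new case is when $\lambda$ and $\mu$ have opposite signs, i.e.\ essentially a difference $f-h$ after rescaling; this is exactly where Lemma~\ref{KL} enters. Apply \eqref{det} with $A=D^2 f_k(x)$ and $B=D^2 h_k(x)$ (restricted to index sets $I,J$): this expresses $\det\big((D^2(f_k-h_k))_{I,J}\big)$ as a fixed finite linear combination, with binomial coefficients, of the determinants $\det\big(((d-j)D^2 f_k+jD^2 h_k)_{I,J}\big)=\det\big(D^2((d-j)f_k+jh_k)_{I,J}\big)$ for $j=0,\dots,d$. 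Each matrix $(d-j)f+jh = (d)\cdot\big(\tfrac{d-j}{d}f+\tfrac{j}{d}h\big)$ is $d$ times a convex combination of $f$ and $h$, hence strongly approximable with the mollified witnesses $(d-j)f_k+jh_k$; integrating the absolute value over a compact $K$ and using the triangle inequality together with Lemma~\ref{KL} gives
$$\int_K\Big|\det\big(D^2(f_k-h_k)\big)_{I,J}\Big|\le\frac{1}{d!}\sum_{j=0}^{d}\binom{d}{j}\int_K\Big|\det\big(D^2((d-j)f_k+jh_k)\big)_{I,J}\Big|\le C(f,h,K),$$
a bound independent of $k$. Since $f_k-h_k\to f-h$ in $L^1_{\loc}$, this shows $f-h$ is strongly approximable, and then the general opposite-sign combination $\lambda f+\mu h$ follows by the same constant-scaling remark applied to $f-\tfrac{-\mu}{\lambda}h$ (or symmetrically).

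The step I expect to be the main obstacle is the bookkeeping in the opposite-sign case: one must check that the minors of order $<d$ are also controlled, not just the full $d\times d$ determinant. For a minor indexed by $I=J$ of size $r<d$, one applies Lemma~\ref{KL} in dimension $r$ (to the $r\times r$ submatrices), which is legitimate since the identity holds for matrices of any size; the binomials then run to $r$ and the convex combinations involved are $\tfrac{r-j}{r}f+\tfrac{j}{r}h$, still convex combinations of $f,h$. For rectangular $I\neq J$ of common size $r$, the determinant $\det(M_{I,J})$ is not a polynomial expression to which Lemma~\ref{KL} directly applies in the multilinear way needed, so a small extra argument is required: one uses that $\det(M_{I,J})$ is, up to sign, still an $r$-linear alternating function of the rows, so the same finite-difference identity (now applied to $\tilde P(t)=\det\big((rB+t(A-B))_{I,J}\big)$, a polynomial of degree $\le r$ in $t$) yields $\det((A-B)_{I,J})=\tfrac{1}{r!}\sum_{j}(-1)^j\binom{r}{j}\det(((r-j)A+jB)_{I,J})$ exactly as in the proof of Lemma~\ref{KL}. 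With that observed, all cases reduce to finitely many convex combinations and the proof is complete; the remaining details (convergence in $L^1_{\loc}$, finiteness of the constants) are routine.
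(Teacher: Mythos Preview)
Your argument follows the same route as the paper's: scale positive combinations down to convex ones, and for the difference $f-ag$ apply Lemma~\ref{KL} to bound each Hessian minor of the approximants by a fixed finite sum of minors of positive combinations. You are in fact more scrupulous than the paper on one point. The paper simply picks witnessing sequences $f_k,g_k$ for $f$ and $g$ separately and then tacitly treats $(m{-}l)f_k+l a g_k$ as a witnessing sequence for $(m{-}l)f+l a g$, invoking the constant $C((m{-}l)f+l a g,K)$; but the abstract hypothesis only gives \emph{some} witnessing sequence for each convex combination, not this particular one, so that last inequality is not justified as written.

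Your mollification device is the natural remedy and works cleanly for the intended application to Theorem~\ref{T1}: there the relevant positive combinations are convex, and it is standard that mollifications of a convex function witness its strong approximability, so one family $f*\rho_{1/k},\,h*\rho_{1/k}$ simultaneously serves all the combinations. In the abstract setting of the corollary, however, your assertion that ``strong approximability of a function is witnessed by its own mollifications'' is not as immediate as you suggest and is not stated in \cite{Fu1}; Hessian minors are nonlinear, so the bound does not pass through convolution by a one-line argument. You should either supply that argument or, more honestly, state the corollary with the extra hypothesis that a single pair of $C^2$ sequences simultaneously witnesses all convex combinations (which is exactly what holds in the convex case). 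Your treatment of minors of size $r<d$, including the $I\neq J$ case via the same finite-difference identity applied to the degree-$r$ polynomial $t\mapsto\det((rB+t(A-B))_{I,J})$, is correct and matches the paper.
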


\begin{proof}
First note that if every convex combination of $f$ and $g$ is strongly approximable, then every linear combination with positive coefficients is strongly approximable as well.
Suppose that $f_{k}\to f$ and $g_{k}\to g$ are the sequences guaranteed by the strong approximability.
It suffices to prove that $f-ag$ is strongly approximable for $a>0$.
We have $f_{k}-ag_{k}\to f-ag$ in $L_{\loc}^1(U)$. 
Choose $I,J\subset\{1,\dots,d\}$ with $|I|=|J|=m$. 
Then we can write
$$
\begin{aligned}
\int_{K}\biggl|&\det\biggl(\frac{\partial^{2}(f_{k}-ag_{k})}{\partial x_{i}\partial x_{j}}\biggr)_{i\in I,j\in J}\biggr|\\
&=\int_{K}\biggl|\frac{1}{m!}\sum_{l=0}^{m}(-1)^{l}\binom{m}{l}\det\biggl(\frac{\partial^{2}((m-l)f_{k}+lag_{k})}{\partial x_{i}\partial x_{j}}\biggr)_{i\in I,j\in J}\biggr|\\
&\leq\frac{1}{m!}\sum_{l=0}^{m}\binom{m}{l}\int_{K}\biggl|\det\biggl(\frac{\partial^{2}((m-l)f_{k}+lag_{k})}{\partial x_{i}\partial x_{j}}\biggr)_{i\in I,j\in J}\biggr|\\
&\leq\frac{1}{m!}\sum_{l=0}^{m}\binom{m}{l}C((m-l)f+lag,K),<\infty
\end{aligned}
$$
where $C((m-l)f+lag,K)$ are the constants from Definition~\ref{strong_approx}).
\end{proof}

\begin{proof}[Proof of Theorem~\ref{T1}]
Follows directly from Corollary~\ref{vector}.
\end{proof}

\begin{corollary} \label{WDC-aura}
Any compact WDC set admits a nondegenerate d.c.\ aura. 
\end{corollary}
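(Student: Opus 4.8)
The plan is to manufacture a d.c.\ aura for a given compact WDC set $A$ out of the d.c.\ witnessing function and Theorem~\ref{T1}. By Definition~\ref{WDC} there is a d.c.\ function $f:\er^d\to\er$ and a weakly regular value $c$ with $A=f^{-1}((-\infty,c])$; after subtracting $c$ we may assume $c=0$. The natural candidate for an aura is
\begin{equation*}
g:=\max(f,0),
\end{equation*}
which is d.c.\ (the maximum of two d.c.\ functions is d.c.), nonnegative, and satisfies $g^{-1}\{0\}=f^{-1}((-\infty,0])=A$. By Theorem~\ref{T1} every d.c.\ function on $\er^d$ is Monge-Amp\`ere, so $g$ is Monge-Amp\`ere. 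The remaining points of Definition~\ref{aura} and the nondegeneracy clause then have to be checked.

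First I would verify properness of $g$ (equivalently, of $\pi_0|\spt[dg]$). Here one uses that $A$ is compact: I would modify $f$ outside a large ball so that $g$ becomes proper while keeping it d.c.\ and keeping $g^{-1}\{0\}=A$. Concretely, fix $R$ with $A\subset B(0,R)$ and replace $f$ by $\tilde f(x):=\max\bigl(f(x),\,|x|^2-R^2\bigr)$; this is still d.c., still has $\tilde f^{-1}((-\infty,0])=A$ (since on $\er^d\setminus B(0,R)$ the term $|x|^2-R^2$ is positive and dominates the relevant sublevel structure — one checks $\tilde f=f$ on a neighborhood of $A$, which is all that matters for the sublevel set at $0$), and $\tilde f\to\infty$ at infinity, hence $g:=\max(\tilde f,0)$ is proper. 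Properness of $g$ together with the inclusion $\spt[dg]\subset\graph\partial^*g$ from \eqref{subdif} gives properness of $\pi_0|\spt[dg]$.

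Next comes nondegeneracy: I must produce an open $U\supset A$ with $\inf\{|u|:x\in U\setminus A,\ u\in\partial^*g(x)\}>0$. This is exactly where weak regularity of the value $0$ for $f$ enters. By definition of weakly regular value there is $\ep>0$ such that for every $x$ with $0<f(x)<\ep$ and every $v\in\partial^*f(x)$ one has $|v|\ge\ep$. Take $U:=\{x:f(x)<\ep\}$ (after the modification above, and shrinking if needed so that $U$ is contained in the region where $g=\max(f,0)$ with $\tilde f=f$). On $U\setminus A$ we have $0<f(x)<\ep$, and there $g=\max(f,0)=f$ on a neighborhood (since $f(x)>0$), so $\partial^*g(x)=\partial^*f(x)$, whence every $u\in\partial^*g(x)$ satisfies $|u|\ge\ep$. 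That is nondegeneracy.

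The step I expect to be the main obstacle is the bookkeeping around the modification of $f$ at infinity: one must simultaneously preserve (a) the d.c.\ property, (b) the identity of the zero set with $A$, (c) properness, and (d) that weak regularity of $0$ is not destroyed — i.e.\ that on some neighborhood of $A$ the modified function agrees with the original $f$, so the $\ep$ from weak regularity carries over verbatim. Using a gluing of the form $\max(f,|x|^2-R^2)$ with $R$ large makes all four compatible, because $\max$ of d.c.\ is d.c., the two pieces agree with $f$ on a neighborhood of the compact set $A$, and the quadratic piece forces properness; but writing this cleanly (and checking $\partial^*$ behaves as claimed on the overlap, using that the two functions are locally equal there) is the only place requiring care. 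Everything else is a direct appeal to Theorem~\ref{T1}, to closure of d.c.\ functions under $\max$, and to \eqref{subdif}.
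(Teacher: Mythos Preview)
Your approach is essentially the same as the paper's: the paper's proof is the single line ``if $A=f^{-1}((-\infty,c])$ then $\tilde f(x)=\max(0,f(x)-c)$ is a nondegenerate d.c.\ aura for $A$,'' with no further justification. Your additional care about properness (modifying by $\max(f,|x|^2-R^2)$ outside a large ball) is a detail the paper's one-liner silently omits; it is a legitimate point since Definition~\ref{aura} asks for a \emph{proper} Monge-Amp\`ere function, and your fix works because on a bounded neighborhood $U=\{f<\ep\}\cap B(0,R')$ with $A\subset B(0,R')$ and $R'<R$ one has $|x|^2-R^2<0<f(x)$ on $U\setminus A$, so $\tilde f=f$ there and weak regularity transfers verbatim.
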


\begin{proof}
Indeed, if $A=f^{-1}((-\infty,c])$ with a d.c.\ function $f:\er^d\to\er$ and weakly regular value $c$, then $\tilde{f}(x)=\max(0,f(x)-c)$ is a nondegenerate d.c.\ aura for $A$.
\end{proof}

\section{Normal cycles of WDC sets}

We know from the last section that a compact WDC set $A\subset\er^d$ admits a nondegenerate aura $f$ which is a d.c. function. This implies already that there exists a Legendrian cycle $N(f,0)$ with support within $\partial A\times\sph$ fulfilling the Gauss-Bonnet formula $N(f,0)(\varphi_0)=\chi(A)$, see Proposition~\ref{P_aura}. In order to verify that 
$$N_A:=N(f,0)$$ 
is the normal cycle of $A$, it remains to show \eqref{E-NC}.

For the whole section, let $A\subset\er^d$ be a compact WDC set with nondegenerate d.c.\ aura $f$.
Denote
$$E_f=\{(v,x\cdot v)\in\sph\times\er:\,(x,-v)\in \nor(f,0)\}.$$
If we associate elements of $E_f$ with halfspaces of $\er^d$, $E_f$ contains all halfspaces touching the current $N(f,0)$, in the sense of the definition given in Section~\ref{Legendrian}, due to Proposition~\ref{P_aura}~(b).

\begin{proposition}  \label{P_th}
If $f$ is a nondegenerate d.c. aura of a compact set $A\subset\er^d$ then $\H^d(E_f)=0$.
\end{proposition}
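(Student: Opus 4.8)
The plan is to reduce the claim to the theorem of Pavlica and Zaj\'i\v cek \cite{PZ}, according to which the set of hyperplanes of $\er^{d+1}$ tangent to the graph of a d.c.\ function on $\er^d$ has $d$-dimensional Hausdorff measure zero; here the ``tangent'' hyperplanes are non-vertical and are parametrized by the coefficient vectors $(u,c)\in\er^d\times\er$ of the affine functions $y\mapsto u\cdot y+c$ whose graphs they are, so the ambient parameter space is $(d+1)$-dimensional.

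First I would describe $E_f$ explicitly. Since $f$ is a nonnegative aura for $A$, it vanishes on $A$, in particular on $\partial A$, and by the inclusion recorded just before Proposition~\ref{P_aura} one has $\nor(f,0)\subset\nu\big(\graph(\partial^*f|_{\partial A})\big)$. Hence every $(x,-v)\in\nor(f,0)$ satisfies $x\in\partial A$ (so $f(x)=0$) and $-v=u/|u|$ for some $u\in\partial^*f(x)$, necessarily with $u\neq0$. Putting $c:=f(x)-u\cdot x=-u\cdot x$, so that $x\cdot u=-c$, we get $x\cdot v=-(x\cdot u)/|u|=c/|u|$. Thus the element $(v,x\cdot v)$ of $E_f$ is the image of $(u,c)$ under the locally Lipschitz map $\Psi(u,c):=(-u/|u|,c/|u|)$ defined on $\{(u,c)\in\er^{d+1}:u\neq0\}$, and $(u,c)$ is precisely the coefficient vector of a hyperplane tangent to $\graph f$ at $(x,f(x))=(x,0)$. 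I would emphasize that the use of the aura $f$ — rather than an arbitrary function defining $A$ — is exactly what makes this work: only the vanishing of $f$ at the contact point lets the offset $x\cdot v$ be reconstructed from the tangent hyperplane alone.

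Denoting by $\Sigma_f:=\{(u,f(y)-u\cdot y):y\in\er^d,\ u\in\partial^*f(y)\}\subset\er^{d+1}$ the set parametrizing the hyperplanes tangent to $\graph f$, the previous paragraph yields $E_f\subseteq\Psi(\Sigma_f\cap\{u\neq0\})$. By \cite{PZ}, $\H^d(\Sigma_f)=0$, and since a locally Lipschitz map carries $\H^d$-null sets to $\H^d$-null sets (cover the domain by countably many sets on which it is Lipschitz), I would conclude $\H^d(E_f)\le\H^d\big(\Psi(\Sigma_f\cap\{u\neq0\})\big)=0$. The argument is short; the only non-elementary input is the result of \cite{PZ}, which itself rests, via convex duality, on the Ewald--Larman--Rogers theorem \cite{ELR} on directions of line segments in the boundary of a convex body. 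Accordingly there is no genuine ``hard part'' internal to this proof beyond the bookkeeping of the previous paragraph — setting up $\Psi$ and recognizing that, thanks to $f\equiv0$ on $\partial A$, membership in $E_f$ is governed by the tangent hyperplanes of $\graph f$ — together with the invocation of \cite{PZ}.
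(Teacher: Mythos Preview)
Your reduction to the Pavlica--Zaj\'i\v cek theorem is natural, and the map $\Psi(u,c)=(-u/|u|,c/|u|)$ together with the inclusion $E_f\subseteq\Psi(\Sigma_f\cap\{u\neq0\})$ is set up correctly. The error is in the statement you attribute to \cite{PZ}: their result gives that $\Sigma_f$ (equivalently, its normalized version $E^{d+1}(f)\subset S^d\times\er$) is $\H^{d+1}$-null, i.e.\ Lebesgue null in the $(d+1)$-dimensional space of hyperplanes of $\er^{d+1}$, \emph{not} $\H^d$-null. Indeed $\H^d(\Sigma_f)=0$ is false already for $f(y)=\tfrac12|y|^2$, whose tangent hyperplanes form the smooth $d$-dimensional paraboloid $\{(u,-\tfrac12|u|^2):u\in\er^d\}$. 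Since your $\Psi$ collapses one dimension (its fibres are rays $\{(\lambda u,\lambda c):\lambda>0\}$), pushing an $\H^{d+1}$-null set through a locally Lipschitz map into a $d$-dimensional target tells you nothing about $\H^d$ of the image.

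The paper's proof repairs exactly this dimensional gap, and to do so it uses two features of the aura that your argument never invokes. First, since $f\ge0=f(x)$ on $\partial A$, one has $0\in\partial^*f(x)$, and convexity of the Clarke subdifferential then gives the whole segment $[0,u]\subset\partial^*f(x)$ for each $u\in\partial^*f(x)$. Second, nondegeneracy provides a uniform $\delta>0$ so that whenever $(x,-v)\in\nor(f,0)$ the segment $\{s\delta(-v):s\in[0,1]\}$ lies in $\partial^*f(x)$. This yields a locally bilipschitz embedding of $E_f\times[0,1]$ into $E^{d+1}(f)$; now \cite{PZ} gives $\H^{d+1}(E_f\times[0,1])=0$, and Fubini finishes. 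In short, the ``thickening'' coming from the full segment of subgradients is precisely the missing idea that converts the $(d+1)$-dimensional information of \cite{PZ} into the $d$-dimensional conclusion you want; going the other way through $\Psi$ cannot work.
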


\begin{proof}
Denote
$$E^{d+1}(f)=\Big\{(1+|u|^2)^{-1/2}\Big( (u,-1),x\cdot u-f(x)\Big):\, x\in\er^d,\, u\in\partial^*f(x)\Big\}.$$
Note that $E^{d+1}(f)\subset S^d\times\er$ corresponds to the family of halfspaces of $\er^{d+1}$ touching $\graph f$ (i.e., with boundary hyperplanes tangent to $\graph f$ in the Clarke sense). A result of Pavlica and Zaj\'\i\v cek \cite[Theorem~4.3]{PZ} says that $\H^{d+1}(E^{d+1}(f))=0$. (In fact, Theorem~4.3 in \cite{PZ} is stated only for tangent hyperplanes in smooth points, but an analysis of the proof shows that the set $E(g,h)$ from \cite[Lemma~4.1]{PZ} contains all tangent hyperplanes in the Clarke sense since it is closed and its fibres with fixed first component are convex.)

Note that if $x\in A$ and $u\in\partial^*f(x)$ then, since $f$ attains its minimum at $x$, also $\alpha u\in\partial^*f(x)$ for any $\alpha\in [0,1]$.

From the nondegeneracy of $f$, there exists $\delta>0$ such that if $(v,t)\in\nor(f,0)$ then there exists $x\in\er^d$ with $x\cdot v=t$ and $s\delta v\in\partial^*f(x)$ for all $s\in[0,1]$. Consequently, the mapping
$$h:(v,t,s)\mapsto (s^2\delta^2+1)^{-1/2}\left((s\delta v,-1),s\delta t\right)$$
embeds $E_f\times[0,1]$ into $E^{d+1}(f)$. Since $h$ is locally bilipschitz, the result of Pavlica and Zaj\'\i\v cek implies that $\H^{d+1}(E_f\times[0,1])=0$. Applying the Fubini theorem, we get $\H^d(E_f)=0$.
\end{proof}

Given $(v,t)\in\sph\times\er$, we define the function
$$g_{v,t}:x\mapsto\max\{ v\cdot x,0\},\quad x\in\er^d.$$
It is not difficult to see the following fact.

\begin{lemma} \label{L_aura_cap_H}
Let $f$ be a d.c.\ nondegenerate aura for a compact set $A\subset\er^d$ and let $(v,t)\in (\sph\times\er)\setminus E_f$. Then $f+g_{v,t}$ is a nondegenerate d.c.\ aura for $A\cap H_{v,t}$ and, consequently,
$$N(f+g_{v,t},0)(\varphi_0)=\chi(A\cap H_{v,t}).$$
\end{lemma}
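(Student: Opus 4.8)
The plan is to verify the two claims in Lemma~\ref{L_aura_cap_H}: first that $f+g_{v,t}$ is a nondegenerate d.c.\ aura for $A\cap H_{v,t}$, and then invoke Proposition~\ref{P_aura}~(d) to conclude the Gauss-Bonnet identity. For the first part I would proceed as follows. The function $g_{v,t}$ is convex (a maximum of two linear functions), hence d.c., so $f+g_{v,t}$ is a sum of two d.c.\ functions and thus d.c. It is proper because $f$ is proper and $g_{v,t}\geq 0$, so $f+g_{v,t}\geq f$ and the sublevel sets of $f+g_{v,t}$ are contained in those of $f$; moreover $f+g_{v,t}\to\infty$ at infinity. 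Next, $(f+g_{v,t})^{-1}\{0\}=f^{-1}\{0\}\cap g_{v,t}^{-1}\{0\}$ since both summands are nonnegative; now $g_{v,t}^{-1}\{0\}=\{x:\,v\cdot x\leq 0\}$. Here I would note that $H_{v,t}=\{x:\,x\cdot v\leq t\}$, so strictly one should use $g_{v,t}(x)=\max\{v\cdot x-t,0\}$ to get the zero set $H_{v,t}$; I would proceed with this (evidently intended) reading, so that $(f+g_{v,t})^{-1}\{0\}=A\cap H_{v,t}$.

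The substantive point is that $f+g_{v,t}$ is still a \emph{Monge-Amp\`ere} function and is \emph{nondegenerate}. Being Monge-Amp\`ere follows from Theorem~\ref{T1}: $f+g_{v,t}$ is d.c., hence Monge-Amp\`ere. For nondegeneracy I need an open neighbourhood $V$ of $A\cap H_{v,t}$ on which the subgradients of $f+g_{v,t}$ are uniformly bounded away from $0$. The natural candidate is $V=U\cap(\er^d\setminus\partial H_{v,t})$ together with a neighbourhood of the relative interior of $A\cap\partial H_{v,t}$; but the clean way is to use the sum rule for Clarke subdifferentials of d.c.\ (or locally Lipschitz) functions, $\partial^*(f+g_{v,t})(x)\subset\partial^*f(x)+\partial^*g_{v,t}(x)$, and to control the two pieces. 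On the open halfspace $\{v\cdot x<t\}$ one has $\partial^*g_{v,t}(x)=\{0\}$, so there the subgradients of $f+g_{v,t}$ coincide with those of $f$ and are bounded below by nondegeneracy of $f$ away from $A$. On $\{v\cdot x>t\}$ one has $\partial^*g_{v,t}(x)=\{v\}$, so $\partial^*(f+g_{v,t})(x)=\partial^*f(x)+v$; since on $A$ the subgradients of $f$ are the convex hull of $0$ and nearby limit gradients (and by the remark in the proof of Proposition~\ref{P_th}, $tu\in\partial^*f(x)$ for $t\in[0,1]$ when $x\in A$), the hypothesis $(v,t)\notin E_f$ guarantees that no subgradient $u\in\partial^*f(x)$ with $x\in A$, $x\cdot v=t$ equals $-v$; a compactness argument then yields a uniform lower bound $\inf|u+v|>0$ over a neighbourhood. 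Patching the two halfspaces and the boundary piece with a finite-cover/compactness argument gives nondegeneracy.

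The main obstacle I anticipate is precisely this uniform lower bound near the cutting hyperplane $\partial H_{v,t}$: away from the hyperplane things are easy, but on $A\cap\partial H_{v,t}$ the subdifferential of $g_{v,t}$ is the whole segment $[0,v]$, so $\partial^*(f+g_{v,t})$ can contain $u+sv$ for $u\in\partial^*f(x)$, $s\in[0,1]$, and I must rule out that this ever equals $0$. The condition $(v,t)\notin E_f$ is exactly designed to prevent $(x,-v)\in\nor(f,0)$, i.e.\ to prevent $-v$ (or, after rescaling using the remark that $[0,1]\cdot\partial^*f(x)\subset\partial^*f(x)$ on $A$, a negative multiple of $v$) from being a subgradient of $f$ at a point of $A\cap\partial H_{v,t}$; combined with closedness of $\graph\partial^*(f+g_{v,t})$ and compactness of $A\cap H_{v,t}$, this gives the desired $\inf\{|u|:\,x\in V\setminus(A\cap H_{v,t}),\,u\in\partial^*(f+g_{v,t})(x)\}>0$. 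Once $f+g_{v,t}$ is established as a nondegenerate d.c.\ aura for $A\cap H_{v,t}$, Proposition~\ref{P_aura}~(d) applied to this aura immediately gives $N(f+g_{v,t},0)(\varphi_0)=\chi(A\cap H_{v,t})$, completing the proof.
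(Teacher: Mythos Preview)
Your proposal is correct and follows essentially the same route as the paper. The paper does not give a separate proof of this lemma; it simply remarks that it is a special case of the subsequent Proposition~\ref{P_int_aur} (sums of non-touching nondegenerate d.c.\ auras are nondegenerate d.c.\ auras for the intersection), applied with $g=g_{v,t}$, after which Proposition~\ref{P_aura}~(d) yields the Gauss--Bonnet identity. Your direct verification is precisely the specialization of that argument: the three-region analysis (interior halfspace, exterior halfspace via $\intr A$, and a neighbourhood of $\partial A\cap\partial H_{v,t}$) matches the cover $G=(U\cap\intr B)\cup(V\cap\intr A)\cup W$ used in the proof of Proposition~\ref{P_int_aur}, and your use of $(v,t)\notin E_f$ together with closedness of $\graph\partial^*f$ and compactness is exactly how the uniform angle bound $\delta>0$ is obtained there.

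Two minor remarks. First, your case ``$v\cdot x>t$'' is slightly entangled with the boundary case in the write-up; the clean split is the one in Proposition~\ref{P_int_aur}: on $\intr A$ outside $H_{v,t}$ one has $\partial^*f(x)=\{0\}$ so the subgradient is $\{v\}$, while points of $\partial A$ just outside $H_{v,t}$ are handled together with the boundary via the neighbourhood $W$ and the non-touching condition. Second, you are right that the displayed definition of $g_{v,t}$ in the paper omits the shift by $t$; the intended function is $x\mapsto\max\{v\cdot x - t,0\}$, which is indeed a nondegenerate d.c.\ aura for $H_{v,t}$ with $\nor(g_{v,t},0)=\partial H_{v,t}\times\{v\}$, so that ``$f$ and $g_{v,t}$ do not touch'' is equivalent to $(v,t)\notin E_f$.
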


Lemma~\ref{L_aura_cap_H} is a special case of the following result.
We shall say that two nondegenerate auras $f,g$ in $\er^d$ {\it touch} if there exists a pair $(x,n)\in\nor(f,0)$ such that $(x,-n)\in\nor(g,0)$.

\begin{proposition}  \label{P_int_aur}
Let $f,g$ be nondegenerate d.c.\ auras for compact sets $A,B\subset\er^d$, respectively. If $f$ and $g$ do not touch then $f+g$ is a nondegenerate d.c.\ aura for $A\cap B$.
\end{proposition}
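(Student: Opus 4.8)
The plan is to verify the three defining properties of a nondegenerate d.c.\ aura for $A\cap B$ in turn: that $h:=f+g$ is d.c.\ and proper, that $h\geq 0$ with $h^{-1}\{0\}=A\cap B$, that $h$ is Monge-Amp\`ere, and finally that $h$ is nondegenerate. The first and second are essentially bookkeeping: the sum of two d.c.\ functions is d.c., the sum of two proper nonnegative functions is proper and nonnegative, and $h(x)=0$ forces $f(x)=g(x)=0$, i.e.\ $x\in A\cap B$; conversely $h$ vanishes on $A\cap B$. By Theorem~\ref{T1} a d.c.\ function is Monge-Amp\`ere, so the third property is immediate from the fact that $h$ is d.c. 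The only genuine work is nondegeneracy, and this is exactly where the hypothesis that $f$ and $g$ do not touch must be used.

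For nondegeneracy I would argue as follows. Choose open neighbourhoods $U_f\supset A$, $U_g\supset B$ witnessing the nondegeneracy of $f$ and $g$, and a constant $\delta_0>0$ with $|u|\geq\delta_0$ for every $x\in U_f\setminus A$, $u\in\partial^*f(x)$, and likewise for $g$. Set $U:=U_f\cap U_g$; this is an open neighbourhood of $A\cap B$. For $x\in U\setminus(A\cap B)$ we must bound $|u+v|$ from below for $u\in\partial^*f(x)$, $v\in\partial^*g(x)$ (using the inclusion $\partial^*h(x)\subset\partial^*f(x)+\partial^*g(x)$, valid since $h=f+g$). Split into cases. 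If $x\notin A$ and $x\notin B$, then $|u|\geq\delta_0$ and $|v|\geq\delta_0$, but their sum could in principle cancel; the point is that this cancellation cannot happen for $x\in\partial A\cap\partial B$ because of the non-touching hypothesis, and for $x$ off both $A$ and $B$ one can instead use that $f$ attains its minimum $0$ on $A$, so near a point of $A\setminus B$ the gradient directions of $f$ are ``pointing into'' $A$ in a controlled way; symmetrically for $B$. The cleanest route is to treat first the case $x\in A\setminus B$ (there $u$ can be $0$, but $|v|\geq\delta_0$, so $|u+v|$ is bounded below once $u$ is controlled — and here one uses the convexity/minimum structure: $\alpha u\in\partial^*f(x)$ for $\alpha\in[0,1]$ when $x\in A$, as recalled in the proof of Proposition~\ref{P_th}), the symmetric case $x\in B\setminus A$, and finally $x\in U\setminus(A\cup B)$ together with the boundary limiting points.

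The main obstacle is precisely the last step: controlling $|u+v|$ when $x$ approaches a common boundary point $x_0\in\partial A\cap\partial B$ from outside both sets. Here I would pass to the closed sets $\nor(f,0)$ and $\nor(g,0)$ (which by Proposition~\ref{P_aura} are compact, contained in $\partial A\times\sph$ and $\partial B\times\sph$) and use a compactness argument: the non-touching hypothesis says that the compact set $\{(x,n):(x,n)\in\nor(f,0),\ (x,-n)\in\nor(g,0)\}$ is empty, hence there is a uniform angular separation — for every $(x,n)\in\nor(f,0)$ and $(x,m)\in\nor(g,0)$ one has $n\cdot m\geq-1+\eta$ for some fixed $\eta>0$. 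A continuity/closedness argument then transfers this separation to $\partial^*f,\partial^*g$ at nearby points $x\in U\setminus(A\cup B)$ (after normalising the subgradients, which are bounded below by $\delta_0$ and above by the local Lipschitz constant), giving $u\cdot v\geq-(1-\eta/2)|u||v|$ there, whence $|u+v|^2=|u|^2+2u\cdot v+|v|^2\geq\eta(|u|^2+|v|^2)\geq\eta\delta_0^2>0$. Shrinking $U$ if necessary to make the closedness/continuity transfer work is the only delicate point; everything else is routine. Assembling the cases yields $\inf\{|w|:x\in U\setminus(A\cap B),\ w\in\partial^*h(x)\}>0$, which is the nondegeneracy of $h$, completing the proof.
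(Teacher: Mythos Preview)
Your overall plan matches the paper's: the only substantive step is nondegeneracy, and the non-touching hypothesis enters through a compactness argument giving a uniform angular separation $u\cdot v\geq(-1+\delta)|u||v|$ between subgradients of $f$ and $g$ near $\partial A\cap\partial B$. That part is right.

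The gap is in your case decomposition. You split according to $x\in A\setminus B$, $x\in B\setminus A$, $x\notin A\cup B$, and then claim the case $x\in A\setminus B$ is handled because ``$|u+v|$ is bounded below once $u$ is controlled'', invoking that $\alpha u\in\partial^*f(x)$ for $\alpha\in[0,1]$. But this does not control $|u+v|$ for a \emph{given} nonzero $u\in\partial^*f(x)$: you need the lower bound for every $w=u+v\in\partial^*f(x)+\partial^*g(x)$, not for some convenient choice of $u$. Concretely, if $x\in\partial A\setminus B$ then $u$ may well be nonzero while $|v|\geq\delta_0$, and nothing you have said rules out $u+v=0$. Such points cannot be avoided by shrinking $U$, since they accumulate at $\partial A\cap\partial B\subset A\cap B$. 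Your angular-separation paragraph only treats $x\in U\setminus(A\cup B)$, so these boundary points fall through the cracks.

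The paper's remedy is to choose the neighbourhood differently: take $G=(U\cap\intr B)\cup(V\cap\intr A)\cup W$, where $W$ is a neighbourhood of $\partial A\cap\partial B$ (contained in $U\cap V$) on which the angular separation holds. On $\intr A$ one has $\partial^*f=\{0\}$, on $\intr B$ one has $\partial^*g=\{0\}$, so the first two pieces are trivial. Any point of $G\setminus(A\cap B)$ lying on $\partial A$ or $\partial B$ is forced into $W$; there the separation bound $u\cdot v\geq(-1+\delta)|u||v|$ together with the fact that at least one of $|u|,|v|$ is $\geq\ep$ (since $x\notin A\cap B$) gives $|u+v|\geq\ep\sqrt{2\delta-\delta^2}$. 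This single estimate on $W$ absorbs both the ``outside both'' case and the problematic $\partial A\setminus B$ case that your decomposition left open.
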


\begin{proof}
Obviously, $f+g$ is a d.c.\ aura for $A\cap B$. We shall show that it is nondegenerate.

Let $\ep>0$ and $U,V$ be open neighbourhoods of $A,B$, respectively, such that $|u|\geq \ep$ and $|v|\geq\ep$ whenever $x\in U\setminus A$, $y\in V\setminus B$, $u\in\partial^*f(x)$ and $v\in\partial^*g(x)$. Since $f$ and $g$ do not touch we know that if $x\in\partial A\cap\partial B$, $(x,m)\in \nor(f,0)$ and $(x,n)\in \nor(g,0)$ then the angle formed by the vectors $m,n$ is less than $\pi$. Using the closedness of $\graph\partial^*f$ and $\graph\partial^*g$ and compactness of $A,B$, we find by a standard argument that there exists $\delta>0$ and an open neighbourhood $W$ of $\partial A\cap\partial B$ such that 
$u\cdot v\geq (-1+\delta)|u||v|$ whenever $x\in W$, $u\in\partial^*f(x)$ and $v\in\partial^*g(x)$. We have now an open cover
$$A\cap B\subset G:=(U\cap\intr B)\cup(V\cap\intr A)\cup W$$
of $A\cap B$. Take $x\in G\setminus (A\cap B)$ and $w\in\partial^*(f+g)(x)$. If $x\in (U\cap\intr B)\setminus A$ then $\partial^*g=\{0\}$, $w\in\partial^*f(x)$ and $|w|\geq\ep$ by the nondegeneracy of $f$. Analogously, if $x\in (V\cap\intr A)\setminus B$ then $\partial^*f(x)=\{0\}$, $w\in\partial^*g(x)$ and $|w|\geq\ep$ by the nondegeneracy of $g$. If, finally, $x\in W\setminus (A\cap B)$ then $w=u+v$ for some $u\in\partial^*f(x)$ and $v\in\partial^*g(x)$ and we get
$$|w|=|u+v|=\sqrt{|u|^2+|v|^2+2u\cdot v}\geq \sqrt{|u|^2+|v|^2+2|u||v|(-1+\delta)}.$$
Since $x\not\in A\cap B$, at least one of the vectors $u,v$ has norm at least $\ep$, say $|u|\geq\ep$. Then
$$|w|\geq \sqrt{\ep^2+|v|^2+2\ep|v|(-1+\delta)}\geq\ep\sqrt{2\delta-\delta^2}\geq\ep\sqrt{\delta}$$
if $\delta<1$. Thus, the aura $f+g$ is nondegenerate and the proof is complete.
\end{proof}

In order to verify \eqref{E-NC}, we need the following relation.

\begin{lemma}  \label{L_tech}
Let $f$ be a d.c.\ nondegenerate aura for a compact set $A\subset\er^d$. Then for $\H^d$-almost all $(v,t)\in\sph\times\er$,
\begin{equation}  \label{E_chi}
\chi(A\cap H_{v,t})=\sum_{x:\, x\cdot v<t}\iota_{N(f,0)}(x,-v)
\end{equation}
and
\begin{equation}  \label{E_sect}
\langle N(f,0),\pi_1,-v\rangle(H_{v,t}\times\sph)=N(f+g_{v,t},0)(\varphi_0).
\end{equation}
\end{lemma}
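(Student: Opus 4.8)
The plan is to establish \eqref{E_chi} first and then \eqref{E_sect}, both relying on Proposition~\ref{P_th} (so that $E_f$ is negligible and the relevant halfspaces miss the support of $N(f,0)$) together with Proposition~\ref{P_aura}(4) and Lemma~\ref{L_aura_cap_H}. For \eqref{E_chi}, I would start from the identity from Proposition~\ref{P_aura}(4) applied to the aura $f+g_{v,t}$: by Lemma~\ref{L_aura_cap_H}, for $(v,t)\notin E_f$ the function $f+g_{v,t}$ is a nondegenerate d.c.\ aura for $A\cap H_{v,t}$, hence $N(f+g_{v,t},0)(\varphi_0)=\chi(A\cap H_{v,t})$. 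The left-hand side must then be rewritten as a sum of index-function values of $N(f,0)$. Since $N(f+g_{v,t},0)$ is a Legendrian cycle, \eqref{Tg2} expresses $N(f+g_{v,t},0)(\varphi_0)$ as $\sum_x \iota_{N(f+g_{v,t},0)}(x,w)$ for a.a.\ $w$; the key point is a local comparison of $N(f+g_{v,t},0)$ with $N(f,0)$ on the open halfspace $\intr H_{v,t}$. On $\intr H_{v,t}$ the perturbation $g_{v,t}$ is locally affine (equal to $0$ or to $x\mapsto v\cdot x$), so it does not change the subdifferential structure there except by a constant shift of the gradient; concretely, over $\pi_0^{-1}(\intr H_{v,t}\setminus\partial H_{v,t})$ the currents $[d(f+g_{v,t})]$ and $[df]$ agree up to the translation $(x,u)\mapsto(x,u+v)$ on the $\{v\cdot x>t\}$ part and are literally equal on the $\{v\cdot x<t\}$ part. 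Passing through $\nu$ and taking boundaries, one reads off that for directions $w$ sufficiently close to $-v$ the slice $\langle N(f+g_{v,t},0),\pi_1,w\rangle$ picks up exactly the points $x$ with $x\cdot v<t$ and weight $\iota_{N(f,0)}(x,w)$. A standard matching argument (exactly as in the proof of Lemma~\ref{L_equiv}: the right-hand side is locally constant in $(v,t)$ on the complement of $E_f$) then lets us replace $w$ by $-v$, giving $\chi(A\cap H_{v,t})=\sum_{x:\,x\cdot v<t}\iota_{N(f,0)}(x,-v)$ for a.a.\ $(v,t)$.

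For \eqref{E_sect}, the right-hand side $N(f+g_{v,t},0)(\varphi_0)$ equals $\chi(A\cap H_{v,t})$ by Lemma~\ref{L_aura_cap_H}, which by \eqref{E_chi} equals $\sum_{x:\,x\cdot v<t}\iota_{N(f,0)}(x,-v)$. On the other hand, \eqref{E_slice} applied to $T=N(f,0)$ gives
$$\langle N(f,0),\pi_1,-v\rangle(H_{v,t}\times\sph)=\sum_{x:\,x\cdot v\le t}\iota_{N(f,0)}(x,-v)$$
for a.a.\ $(v,t)$. So it only remains to see that the two sums $\sum_{x\cdot v<t}$ and $\sum_{x\cdot v\le t}$ coincide for a.a.\ $(v,t)$, i.e.\ that the boundary contribution $\sum_{x\cdot v=t}\iota_{N(f,0)}(x,-v)$ vanishes. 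This is precisely where $\H^d(E_f)=0$ is used: if $(x,-v)\in\nor(f,0)$ with $x\cdot v=t$ then $(v,t)\in E_f$ by definition of $E_f$, and outside $E_f$ there is no such $x$; since $\spt N(f,0)\subset\nor(f,0)$ (Proposition~\ref{P_aura}(2)), the index function $\iota_{N(f,0)}(\cdot,-v)$ is supported on $\{x:(x,-v)\in\nor(f,0)\}$, so for $(v,t)\notin E_f$ no $x$ with $x\cdot v=t$ contributes. Combining the three displayed equalities yields \eqref{E_sect}.

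I expect the main obstacle to be the first step of \eqref{E_chi}: making rigorous the claim that slicing $N(f+g_{v,t},0)$ by directions near $-v$ reproduces $\iota_{N(f,0)}$ on $\{x\cdot v<t\}$. One has to track carefully how the operations defining $N(\cdot,0)$ from an aura — restricting $[d\,\cdot\,]$ to $\pi_0^{-1}(U\setminus A)$, taking the boundary, pushing forward by $\nu$ — interact with the local modification $f\mapsto f+g_{v,t}$ away from $\partial H_{v,t}$, and to control the error term $S$ in \eqref{def_J}-\eqref{delta} so that it does not interfere with slices at directions within $\delta$ of $-v$. Once this localization is in place, everything else is the same bookkeeping as in Lemma~\ref{L_equiv} together with the measure-zero input from Proposition~\ref{P_th}. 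An alternative, possibly cleaner route is to avoid an independent argument altogether and instead identify $N(f+g_{v,t},0)$ with the current $\mathcal J(N(f,0),v,t)$ of \eqref{def_J} (both should be the restriction of the normal cycle of $A\cap H_{v,t}$), after which \eqref{E_chi} follows from the computation already carried out inside the proof of Lemma~\ref{L_equiv} and \eqref{E_sect} follows as above; I would try this reduction first and fall back on the direct slicing argument only if the identification of $N(f+g_{v,t},0)$ with $\mathcal J(N(f,0),v,t)$ is not immediate.
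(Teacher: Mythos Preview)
Your plan is correct and follows essentially the same route as the paper: the paper observes that $g_{v,t}\equiv 0$ on $\intr H_{v,t}$ (so $N(f+g_{v,t},0)$ and $N(f,0)$ agree over $\intr H_{v,t}$), obtains the $\delta$-bound on the boundary piece from $(v,t)\notin E_f$ by compactness, applies \eqref{Tg2} and the matching argument from Lemma~\ref{L_equiv} to get \eqref{E_chi}, and then derives \eqref{E_sect} from \eqref{E_slice} exactly as you describe. The only simplification over your sketch is that there is no need to analyze the region $\{v\cdot x>t\}$ or any translation of the gradient there, since $\spt N(f+g_{v,t},0)\subset (A\cap H_{v,t})\times\sph$ already lies over $H_{v,t}$.
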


\begin{proof}
First, note that since $g_{v,t}=0$ on $\intr H_{v,t}$, we have
$$N(f+g_{v,t},0)\llc\intr H_{v,t}=N(f,0)\llc\intr H_{v,t}.$$
Assume that $H_{v,t}\not\in E_f$ (which is true for $H^{d}$-almost all $(v,t)$ by Proposition~\ref{P_th}). Then, by compactness, there exists $\delta>0$ such that $v\cdot w\geq -1+\delta$ whenever $(x,w)\in\nor(f,0)$ and $x\cdot v=t$. Using the relation between $\partial^*f$ and $\partial^*(f+g_{v,t})$, we get the following:
$$(x,w)\in\nor(f+g_{v,t},0),\ x\cdot v=t\implies w\cdot v\geq -1+\delta.$$
$N(f+g_{v,t},0)$ is a Legendrian cycle (see Proposition~\ref{P_aura}). Thus, we can proceed as in the proof of Lemma~\ref{L_equiv} and get
$$N(f+g_{v,t},0)(\varphi_0)=\sum_x\iota_{N(f+g_{v,t},0)}(x,w)$$
for a.a.\ $w\in\sph$. Taking into account the considerations from the beginning of the proof, we obtain
$$N(f+g_{v,t},0)(\varphi_0)=\sum_{x:\ x\cdot v\leq t}\iota_{N(f,0)}(x,w)$$
for almost all $v,w\in\sph$ such that $H_{v,t}\not\in E_f$ and $w\cdot v< -1+\delta$. Again by compactness, the right hand side does not change if we perturb $(v,t)$ slightly. This implies (cf.\ the proof of Lemma~\ref{L_equiv}) that
$$N(f+g_{v,t},0)(\varphi_0)=\sum_{x:\ x\cdot v\leq t}\iota_{N(f,0)}(x,-v)=\sum_{x:\ x\cdot v<t}\iota_{N(f,0)}(x,-v)$$
for almost all $v,w\in\sph$ and, applying Lemma~\ref{L_aura_cap_H}, we get \eqref{E_chi}.

Also $N(f,0)$ is a Legendrian cycle and the procedure used in the proof of Lemma~\ref{L_equiv} yields
$$\langle N(f,0),\pi_1,-v\rangle (H_{v,t}\times\sph)=\sum_{x:\, x\cdot v\leq t}\iota_{N(f,0)}(x,-v)$$
for a.a.\ $v\in\sph$, and a comparison with the last but one formula proves \eqref{E_sect}.
\end{proof}

\begin{proof}[Proof of Theorem~\ref{T2}]
Let $A$ be a compact WDC set in $\er^d$. By Corollary~\ref{WDC-aura}, $A$ admits a nondegenerate d.c.\ aura $f$. Combining Lemma~\ref{L_aura_cap_H} and \eqref{E_sect}, we obtain \eqref{E-NC}, verifying that $N_A=N(f,0)$ is the normal cycle of $A$.
\end{proof}

We describe now the relation between the normal cycles of a WDC set and its diffeomorphic image. If $L:\er^d\to\er^d$ is linear we denote its adjoint (transpose) map by $L^*$.

\begin{theorem}   \label{T-diff}
Let $A\subset\er^d$ be compact and WDC and let $\Psi:\er^d\to\er^d$ be a ${\mathcal C}^2$ diffeomorphism. Then, $\Psi(A)$ is WDC as well and
$$N_{\Psi(A)}=\hat{\Psi}_{\#}N_A,$$
where $\hat{\Psi}(x,n)=(\Psi(x),(d\Psi(x)^*)^{-1}(n)/|(d\Psi(x)^*)^{-1}(n)|)$.
\end{theorem}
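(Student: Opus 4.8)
The plan is to prove the theorem by tracking what a $\mathcal{C}^2$ diffeomorphism does to an aura and to the associated Monge-Amp\`ere current, and then to invoke the uniqueness of the normal cycle. First I would establish that $\Psi(A)$ is WDC: by Remark (iii) after Definition~\ref{WDC}, if $f$ is a d.c.\ witnessing function for $A$ with weakly regular value $c$, then $f\circ\Psi^{-1}$ is d.c.\ (composition of a d.c.\ function with a $\mathcal{C}^2$ map is d.c.), and one checks that $c$ remains a weakly regular value because $d\Psi^{-1}$ is locally bi-Lipschitz, so the Clarke subdifferential of $f\circ\Psi^{-1}$ at $\Psi(x)$ is $(d\Psi(x)^*)^{-1}\partial^*f(x)$ up to the usual care with the chain rule for Lipschitz functions, whose norms are bounded below away from $A$. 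By Corollary~\ref{WDC-aura} we may take $f$ to be a nondegenerate d.c.\ aura for $A$; then $g:=f\circ\Psi^{-1}$ is a nondegenerate d.c.\ aura for $\Psi(A)$.

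The core computation is to show $[dg]=\Theta_{\#}[df]$ for the appropriate bundle map $\Theta$ on $\er^d\times\er^d$ covering $\Psi$, namely $\Theta(x,y)=(\Psi(x),(d\Psi(x)^*)^{-1}y)$. At points where $f$ (hence $g$) is differentiable this is just the chain rule $\nabla g(\Psi(x))=(d\Psi(x)^*)^{-1}\nabla f(x)$; the point is to promote this to the level of currents. I would verify the three defining properties of Definition~\ref{MA} for $\Theta_{\#}[df]$ as a candidate for $[dg]$: the Lagrangian condition $\Theta_{\#}[df]\llc\omega=0$ follows because $\Theta$ is a symplectomorphism — indeed $\Theta$ is (the cotangent lift of $\Psi$, which) preserves the canonical $1$-form $\alpha$ up to the relevant identifications, hence preserves $\omega=d\alpha$, so $\Theta^\#\omega=\omega$ and $\Theta_{\#}[df]\llc\omega=\Theta_{\#}([df]\llc\Theta^\#\omega)=\Theta_{\#}([df]\llc\omega)=0$; properness of $\pi_0$ on the support transfers since $\Psi$ is a diffeomorphism; and the integral condition (iii) follows from the change-of-variables formula for $\Psi$ together with the chain rule, matching $\int\phi(u,\nabla f(u))\,d\H^d(u)$ against $\int\phi'(\Psi^{-1}(w),\ldots)$ after substitution. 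By uniqueness in Definition~\ref{MA}, $[dg]=\Theta_{\#}[df]$.

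Next I would push this through the aura construction of Section~5. Since $\Theta$ maps the graph of $\partial^*f$ to the graph of $\partial^*g$ (by the subdifferential chain rule noted above) and commutes appropriately with the boundary operator and with restriction to $\pi_0^{-1}(\Psi(U)\setminus\Psi(A))$, the unnormalized normal current satisfies $-\partial([dg]\llc\pi_0^{-1}(\Psi(U)\setminus\Psi(A)))\llc\pi_0^{-1}(\Psi(U)) = \Theta_{\#}\big(-\partial([df]\llc\pi_0^{-1}(U\setminus A))\llc\pi_0^{-1}(U)\big)$. Applying the radial normalization $\nu(x,y)=(x,y/|y|)$ and noting that $\nu\circ\Theta = \hat\Psi\circ\nu$ by the very definition of $\hat\Psi$ (dividing $(d\Psi(x)^*)^{-1}y$ by its norm is unaffected by first dividing $y$ by its norm), we get $N(g,0)=\hat\Psi_{\#}N(f,0)$, i.e.\ $N_{\Psi(A)}=\hat\Psi_{\#}N_A$. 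Strictly, to conclude $N(g,0)$ \emph{is} the normal cycle of $\Psi(A)$ one either repeats this for $g=f\circ\Psi^{-1}$ directly (it is a nondegenerate d.c.\ aura, so Theorem~\ref{T2}'s proof applies verbatim) or checks $\hat\Psi_{\#}N_A$ satisfies \eqref{th} and \eqref{E-NC} and appeals to uniqueness; the former is cleaner.

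The main obstacle I anticipate is the rigorous chain rule at the level of currents and subdifferentials: $f$ is only Lipschitz, so identities like $\partial^*(f\circ\Psi^{-1})(w)=(d\Psi(\Psi^{-1}(w))^*)^{-1}\partial^*f(\Psi^{-1}(w))$ and the pushforward identity for $[df]$ must be justified carefully, using that $\Psi$ is $\mathcal{C}^2$ (so $d\Psi$ is $\mathcal{C}^1$, guaranteeing that the bundle map $\Theta$ is $\mathcal{C}^1$ and that Federer's pushforward of currents behaves well) and that gradients exist $\H^d$-a.e.\ with the change-of-variables formula valid under the bi-Lipschitz diffeomorphism $\Psi$. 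Verifying that $\Theta$ is genuinely a symplectomorphism — the step that gives the Lagrangian property almost for free — is the conceptual heart, and once it is in place the remaining bookkeeping with $\partial$, slicing, and the normalization $\nu$ is routine.
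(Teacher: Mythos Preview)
Your proposal is correct and follows essentially the same route as the paper: introduce the cotangent lift $\Theta(x,y)=(\Psi(x),(d\Psi(x)^*)^{-1}y)$ (the paper calls it $\tilde{\Psi}$), establish $[d(f\circ\Psi^{-1})]=\Theta_{\#}[df]$ via the chain rule, and then push this identity through the definition of $N(\cdot,0)$ using $\nu\circ\Theta=\hat{\Psi}\circ\nu$. If anything, you are more careful than the paper, which simply asserts the pushforward identity ``from the definitions'' without spelling out the symplectomorphism argument for the Lagrangian condition.
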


\begin{proof}
Denote $\tilde{\Psi}(x,y)=(\Psi(x),(d\Psi(x)^*)^{-1}y)$ and note that $\hat{\Psi}=\nu\circ\tilde{\Psi}$. If $f$ is differentiable at $x$ and $y=\Psi(x)$ we have by the chain rule
$$\nabla(f\circ\Psi^{-1})(y)=(d\Psi(x)^*)^{-1}(\nabla f(x)).$$
We see from the definitions that if $f$ is a nondegenerate d.c.\ aura for $A$ then $f\circ\Psi^{-1}$ is a nondegenerate d.c.\ aura for $\Psi(A)$, and that
$$[d(f\circ\Psi^{-1})]=\tilde{\Psi}_{\#}[df].$$
Further, if $U$ is an open neighbourhood of $A$ such that the subgradient of $f$ is nondegenerate on $U\setminus A$, then $\Phi(U)$ is an open neighbourhood of $\Phi(A)$ with nondegenerate subgradient of $f\circ \Psi^{-1}$ on $\Psi(U)\setminus \Psi(A)=\Psi(U\setminus A)$, and we have from the definition
\begin{eqnarray*}
N(f\circ\Psi^{-1},0)&=&
\nu_{\#}\left( -\partial(\tilde{\Psi}_{\#}[df]\llc\pi_0^{-1}(\Psi(U\setminus A)))\llc\pi_0^{-1}(\Psi(U))\right)\\
&=&
\nu_{\#}\left( -\partial(\tilde{\Psi}_{\#}([df]\llc\pi_0^{-1}(U\setminus A))\llc\pi_0^{-1}(\Psi(U))\right)\\
&=&
\nu_{\#}\tilde{\Psi}_{\#}\left( -\partial([df]\llc\pi_0^{-1}(U\setminus A))\llc\pi_0^{-1}(U)\right)\\
&=&\hat{\Psi}_{\#}N(f,0),
\end{eqnarray*}
verifying the assertion.
\end{proof}

\section{Local description of the normal cycle}

Let $K\subset\er^d$ be a convex body (i.e., a nonempty, compact and convex set). Clearly, the distance function 
$$d_K:\, x\mapsto\dist(X,k)$$
is a nondegenerate aura for $K$.

\begin{lemma}  \label{L_AK}
Let $A\subset\er^d$ be a compact WCD set and let $K$ be a convex body in $\er^d$. If some nondegenerate d.c.\ aura $f$ of $A$ does not touch $d_K$ then $A\cap K$ is WDC and
$$N_{A\cap K}\llc\intr K=N_A\llc\intr K.$$
\end{lemma}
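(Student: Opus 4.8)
The plan is to reduce this statement to Proposition~\ref{P_int_aur} and Lemma~\ref{L_tech}, exploiting the fact that the two conclusions (WDC-ness of $A\cap K$, and the localized identity of normal cycles on $\intr K$) are of a type we have already handled when intersecting with halfspaces. First I would observe that since $f$ does not touch $d_K$, Proposition~\ref{P_int_aur} applies directly: $f+d_K$ is a nondegenerate d.c.\ aura for $A\cap K$, so by Corollary~\ref{WDC-aura} (or rather by the definition of WDC together with the fact that $\max(0,(f+d_K))=f+d_K$) the set $A\cap K$ is WDC and $N_{A\cap K}=N(f+d_K,0)$. This settles the first assertion and reduces the second to showing
$$N(f+d_K,0)\llc\intr K=N(f,0)\llc\intr K.$$

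Next I would establish this last equality by the same mechanism used at the start of the proof of Lemma~\ref{L_tech}: on $\intr K$ the aura $d_K$ vanishes identically, hence $\partial^*(f+d_K)(x)=\partial^*f(x)$ for every $x\in\intr K$, and correspondingly $[d(f+d_K)]$ and $[df]$ agree over $\pi_0^{-1}(\intr K)$. Passing through the definition of $N(\cdot,0)$ as $\nu_\#$ of (minus) the boundary of the appropriate restriction of the Monge--Amp\`ere current, and using that restriction to $\pi_0^{-1}(\intr K)$ commutes with the relevant operations on the interior (where no new boundary is created because $\intr K$ is open and we are restricting, not cutting), one gets that the two Legendrian cycles coincide when restricted to $\intr K\times\sph$. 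Concretely: choose a common open neighbourhood on which both auras are nondegenerate, write out $N(f+d_K,0)\llc\pi_0^{-1}(\intr K)$ and $N(f,0)\llc\pi_0^{-1}(\intr K)$ from the definition, and note that $[d(f+d_K)]\llc\pi_0^{-1}(\intr K)=[df]\llc\pi_0^{-1}(\intr K)$ together with the support constraint $\spt N(\cdot,0)\subset\partial(\cdot)\times\sph$ forces agreement of the restrictions.

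The main obstacle I anticipate is the bookkeeping with the boundary operator: $N(f,0)$ is defined via $-\partial([df]\llc\pi_0^{-1}(U\setminus A))$, and a priori $\partial$ does not commute with the restriction $\llc\pi_0^{-1}(\intr K)$, so one cannot simply restrict inside the $\partial$. The point to make carefully is that the discrepancy $\partial(S\llc\pi_0^{-1}(\intr K))-(\partial S)\llc\pi_0^{-1}(\intr K)$ is supported on $\pi_0^{-1}(\partial K)$, which is disjoint from $\pi_0^{-1}(\intr K)$, so after the final restriction to $\pi_0^{-1}(\intr K)$ (and applying $\nu_\#$, which respects fibres over $\intr K$) the discrepancy disappears. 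This is exactly the kind of interior-locality argument already used implicitly for halfspaces in Lemma~\ref{L_tech}, and I would phrase it to mirror that proof so as not to repeat the details.
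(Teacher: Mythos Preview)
Your proposal is correct and follows essentially the same approach as the paper: apply Proposition~\ref{P_int_aur} to get that $f+d_K$ is a nondegenerate d.c.\ aura for $A\cap K$, and then use that $f+d_K=f$ on $\intr K$ to deduce the equality of the restricted normal cycles. The paper's own proof is in fact just two sentences that state exactly these two points without further elaboration; your discussion of the boundary-operator bookkeeping simply makes explicit what the paper leaves implicit.
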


\begin{proof}
$f+d_K$ is a nondegenerate aura for $A\cap K$ by Proposition~\ref{P_int_aur}. Thus, $A\cap K$ is WDC.
Since $f+d_K=f$ on $\intr K$, the equality $N(f+d_K,0)\llc\intr K=N(f,0)\llc\intr K$ follows.
\end{proof}

\begin{definition} \rm
A family $\mathcal V$ of compact subsets of $\er^d$ is a {\it Vitali system} if for any $x\in \er^d$ and $\delta>0$ there exists $K\in\mathcal V$ such that $x\in\intr K$ and $K\subset B(x,\delta)$. If $f$ is a function defined on $\mathcal V$ and $a\in\er$, we write $\lim_{K\to x}f(K)=a$ if for any $\eps>0$ there exists $\delta>0$ such that $|f(K)-a|<\eps$ whenever $x\in \intr K$ and $K\subset B(0,\delta)$.
\end{definition}

Let $f$ be a nondegenerate d.c.\ aura for $A\subset\er^d$ compact. Let $\mathcal V$ denote the system of all convex bodies in $\er^d$ such that $d_K$ does not touch $f$.

\begin{proposition}  \label{Vitali}
$\mathcal V$ is a Vitali system.
\end{proposition}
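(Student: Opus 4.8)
The plan is to show that given any $x\in\er^d$ and $\delta>0$ we can find a convex body $K$ with $x\in\intr K$, $K\subset B(x,\delta)$, whose distance function $d_K$ does not touch the aura $f$ — that is, there is no pair $(y,n)$ with $(y,n)\in\nor(d_K,0)$ and $(y,-n)\in\nor(f,0)$. Since $\nor(f,0)\subset\partial A\times\sph$ by Proposition~\ref{P_aura}(b), the only candidates $y$ for a touching point lie in $\partial A\cap\partial K$; if $x\notin A$ we may simply take $K$ to be a small ball around $x$ disjoint from $A$ and there is nothing to check, so assume $x\in\partial A$ (if $x\in\intr A$, again any small ball inside $\intr A$ works). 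The unit normals to a convex body $K$ at a boundary point $y$ are exactly the vectors $n\in\sph$ with $\langle n,z-y\rangle\le 0$ for all $z\in K$, i.e.\ the outer normals of supporting hyperplanes; so $\nor(d_K,0)$ consists of such pairs $(y,n)$. Thus $d_K$ touches $f$ iff there is $y\in\partial A\cap\partial K$ and $n\in\sph$ such that $n$ is an outer normal of a supporting hyperplane of $K$ at $y$ and $(y,-n)\in\nor(f,0)$.

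The key step is to choose $K$ so as to avoid this. Take $K=B(x,r)$ for a suitable small $r<\delta$. For $K=B(x,r)$, a point $y\in\partial K$ has the single outer unit normal $n=(y-x)/r$. So $d_K$ touches $f$ iff there exists $y$ with $|y-x|=r$, $y\in\partial A$, and $\bigl(y,(x-y)/r\bigr)\in\nor(f,0)$; equivalently, there is $(y,m)\in\nor(f,0)$ with $y\in\partial B(x,r)$ and $m=(x-y)/|x-y|$, i.e.\ $m$ points from $y$ back towards the centre $x$. I claim this fails for a.e.\ small $r$, by a measure argument. Consider the map $\Theta:\nor(f,0)\to\er$ sending $(y,m)\mapsto |y-x|$. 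The set of ``bad'' radii is $\{\,r>0 : \exists\,(y,m)\in\nor(f,0),\ |y-x|=r,\ m=(x-y)/|x-y|\,\}$. Using the aura structure: for $(y,m)\in\nor(f,0)$ we have $m\in\partial^*f(y)/|{\cdot}|$; the requirement $m=(x-y)/|x-y|$ means $x$ lies on the ray $y-\er_{+}m$. Now invoke Proposition~\ref{P_th}: the set $E_f\subset\sph\times\er$ has $\H^d$-measure zero. A bad radius $r$ for $K=B(x,r)$ produces, with $v:=(y-x)/r=-m$, a point of $E_f$ of the form $(v,\,y\cdot v)=(v,\,x\cdot v+r)$; but in fact one gets a whole continuum of such points as $r$ ranges, and translating through $\nu$ and the bilipschitz embedding of Proposition~\ref{P_th} one sees that the set of bad directions-and-offsets is contained in $E_f$ up to the radial parametrisation. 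Hence the set of bad $r$ is $\H^1$-null, so we can pick a good $r<\delta$, and then $K=B(x,r)$ does the job.

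The main obstacle I anticipate is turning ``bad radius'' into ``element of a null set'' cleanly: one must be careful that a single bad $r$ need not correspond to a positive-measure chunk of $E_f$, so the naive Fubini slicing of $E_f$ does not immediately give that a.e.\ $r$ is good. The fix is to parametrise differently. Rather than balls, use ellipsoids or, more flexibly, for a fixed $x$ consider the two-parameter family $K_{v,r}$ of balls whose centres are slightly displaced, $K=B(x+tv,\rho)$ chosen so that $x\in\intr K$; then the set of $(v,t,\rho)$ for which $d_K$ touches $f$ maps by a locally bilipschitz map into $E_f\times[0,1]$ essentially as in the proof of Proposition~\ref{P_th}, hence is $\H^{d+1}$-null, and by Fubini for a.e.\ choice the body is good — in particular arbitrarily small good bodies around $x$ exist. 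This yields the Vitali property. An alternative, perhaps cleaner, route: since $\nor(f,0)\subset\partial A\times\sph$ is compact, for fixed $x$ the set of $n\in\sph$ that occur as $(y,n)$ with $y\in\partial A$, $(y,-n)\in\nor(f,0)$, is compact, and one chooses $K$ to be a convex body with $x\in\intr K$ whose outer normal cone structure at boundary points near $x$ avoids that compact set of ``forbidden'' directions; smooth strictly convex bodies whose boundary near $\partial A$ is controlled will suffice. Either way, Proposition~\ref{P_th} supplies the needed smallness, and the construction is local around $x$, so $\delta$ is handled automatically.
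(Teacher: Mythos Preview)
Your argument has a genuine gap that you partly anticipate but do not close. From $\H^d(E_f)=0$ you correctly obtain (via the bilipschitz change of variables $(v,r)\mapsto(v,x\cdot v+r)$) that the set $\{(v,r)\in\sph\times(0,\infty):(x-rv,\,v)\in\nor(f,0)\}$ has $\H^d$-measure zero. But the set $R_x$ of bad radii is the \emph{projection} of this set to the $r$-coordinate, and an $\H^d$-null subset of the $d$-dimensional manifold $\sph\times(0,\infty)$ can certainly project onto all of $(0,\infty)$; so nothing prevents $R_x\supset(0,\delta)$. Your proposed fix with displaced centres does not repair this: for the family $B(c,\rho)$, a bad parameter gives a point $(-m,-y\cdot m,\rho)$ in $E_f\times(0,\infty)$, but the assignment $(c,\rho)\mapsto(-m,-y\cdot m,\rho)$ is neither well-defined (several touching pairs $(y,m)$ may exist) nor injective (from $(-m,-y\cdot m,\rho)$ one recovers only $y\cdot m$, not $y$, hence not $c=y+\rho m$), so there is no bilipschitz embedding of the bad set into $E_f\times[0,1]$. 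A direct mass bound on the bad set would require a dimension or rectifiability estimate on $\nor(f,0)$, which is precisely the open problem mentioned at the end of the paper. Your last alternative, ``avoid the compact set of forbidden directions'', is also unworkable as stated: for any convex body with interior point $x$, the Gauss map is onto $\sph$, so every direction occurs as an outer normal somewhere on $\partial K$; the obstruction is about pairs $(y,n)$, not directions alone.

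The paper takes a different route, using parallelepipeds rather than smooth bodies. The point is that if $K=\bigcap_{i=1}^d(H_{v_i,\beta_i}\cap H_{-v_i,\alpha_i})$ and $d_K$ touches $f$ at $(y,n)$, then $y$ lies in the relative interior of some face $F$ of $K$ and $n$ is orthogonal to $\aff F$; hence $\aff F\in T_{\dim F}(f)$. Thus non-touching is guaranteed once the affine hulls of \emph{all} faces (of every dimension $0,\ldots,d-1$) avoid the tangent sets $T_i(f)$. The paper first extends Proposition~\ref{P_th} inductively to $\mu_i^d(T_i(f))=0$ for every $i$ (Lemma~\ref{nulovamira}), and then a Fubini/density argument (Lemma~\ref{vektory}) produces fixed linearly independent directions $v_1,\ldots,v_d$ together with dense sets $E_i\subset\er$ of offsets such that every resulting face lies in $\A_i^d\setminus T_i(f)$. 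The parallelepipeds built from these data form the required Vitali system. The conceptual ingredient you are missing is that smooth convex bodies probe only tangent \emph{hyperplanes}, and $\H^d(E_f)=0$ alone is too weak; passing to polytopes lets one exploit the whole hierarchy $T_i(f)$ and turns the measure-zero information into an explicit construction.
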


The proof will follow from two auxiliary lemmas.

Let $f$ be a nondegenerate d.c.\ aura.
We shall say that an affine subspace $F\in{\mathcal A}_i^d$ is {\it tangent} to $f$ if there exists a pair $(x,n)\in\nor(f,0)$ such that $x\in F$ and $n\perp F$. Let $T_i(f)$ denote the set of all tangent affine $i$-subspaces to $f$.

\begin{lemma}\label{nulovamira}
$\mu_i^d(T_i(f))=0$ for all $i=1,\ldots, d-1$.
\end{lemma}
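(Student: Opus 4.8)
We argue by induction on the codimension $c:=d-1-i$. For $c=0$, i.e.\ $i=d-1$, the assertion is a restatement of Proposition~\ref{P_th}: the locally bi-Lipschitz $2$-to-$1$ map $(v,t)\mapsto\partial H_{v,t}$ pushes $\H^d\llc(\sph\times\er)$ forward onto (a constant multiple of) $\mu_{d-1}^d$ and carries $E_f$ onto $T_{d-1}(f)$, so $\H^d(E_f)=0$ yields $\mu_{d-1}^d(T_{d-1}(f))=0$.

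Assume now $c\geq1$, so $1\leq i\leq d-2$, and that the lemma holds in codimension $c-1$. The elementary observation driving the argument is: if $F=x+\vec F\in T_i(f)$ (where $\vec F$ denotes the direction of $F$) with witness $(x,n)\in\nor(f,0)$, so $x\in F$ and $\vec F\subseteq n^\perp$, then $F\subseteq x+n^\perp\in T_{d-1}(f)$. Hence, for a hyperplane $E\notin T_{d-1}(f)$ and an $F\in T_i(f)$ with $F\subseteq E$, the vector $n$ cannot be orthogonal to $E$ (otherwise $E=x+n^\perp\in T_{d-1}(f)$), so its orthogonal projection $\bar n$ onto the linear subspace $\vec E$ parallel to $E$ is nonzero, $\bar n\perp\vec F$, and $x\in A\cap E$. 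The heart of the matter is the following \emph{slice claim}: for every hyperplane $E\notin T_{d-1}(f)$ --- a set of full $\mu_{d-1}^d$-measure by the case $c=0$ --- the restriction $g_E:=f|_E$ is a nondegenerate d.c.\ aura of the compact set $A\cap E$ inside $E\cong\er^{d-1}$, and $T_i(f)\cap{\mathcal A}_i^{d-1}(E)\subseteq T_i(g_E)$, tangency on the right being computed inside $E$.

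To prove the slice claim, note that $g_E$ is d.c.\ and proper by construction and Monge-Amp\`ere by Theorem~\ref{T1} used in dimension $d-1$; it is nondegenerate because $E\notin T_{d-1}(f)$ means no $(x,n)\in\nor(f,0)$ with $x\in A\cap E$ has $n\perp E$, so $|\bar n|$ is bounded below on the compact set $\{(x,n)\in\nor(f,0):x\in A\cap E\}$ and, by closedness of $\graph\partial^*f$, on a whole neighbourhood of $A\cap E$ in $E$. For the inclusion one checks that $(x,\bar n/|\bar n|)\in\nor(g_E,0)$ whenever $(x,n)\in\nor(f,0)$ with $x\in E$: writing $f$ as a difference of two convex functions and using the identity $\partial(\varphi|_E)=\proj_{\vec E}\,\partial\varphi$ valid for convex $\varphi$, one identifies the limiting subgradients of $g_E$ at $x$ with the $\vec E$-projections of the limiting subgradients of $f$ at $x$. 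Together with the observation above (take the witness point equal to $x$ and the normal equal to $\bar n/|\bar n|$), this shows that every $F\in T_i(f)$ with $F\subseteq E$ lies in $T_i(g_E)$.

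Granting the slice claim, the integral-geometric formula \eqref{int_geom} with $j=d-1$ gives
$$\mu_i^d(T_i(f))=\int_{{\mathcal A}_{d-1}^d}\mu_i^{d-1}\bigl(T_i(f)\cap{\mathcal A}_i^{d-1}(E)\bigr)\,\mu_{d-1}^d(dE),$$
and for $\mu_{d-1}^d$-almost all $E$ the integrand is $\leq\mu_i^{d-1}(T_i(g_E))=0$ by the slice claim and the induction hypothesis applied to the nondegenerate d.c.\ aura $g_E$ on $E\cong\er^{d-1}$ (same index $i$, codimension $c-1$); hence $\mu_i^d(T_i(f))=0$. I expect the slice claim to be the main obstacle, since it demands understanding how the Clarke normal bundle $\nor(f,0)$ and the weak regularity of a d.c.\ aura transform under restriction to a generic hyperplane. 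Note that the much cruder inclusion $T_i(f)\subseteq\bigcup_{H\in T_{d-1}(f)}{\mathcal A}_i^{d-1}(H)$, although correct, is insufficient: the image of a $\mu_{d-1}^d$-null family of hyperplanes under $H\mapsto H\cap E$ need not be $\mu_i^{d-1}$-null, so one is forced to retain the witness points $x$ --- equivalently, the restricted aura $g_E$ --- and not merely the tangent hyperplanes.
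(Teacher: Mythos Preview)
Your argument is correct and follows essentially the same route as the paper's proof: induction via the integral-geometric decomposition \eqref{int_geom} with $j=d-1$, using Proposition~\ref{P_th} as the base case and passing to the restricted aura $f|_E$ on a generic hyperplane $E\notin T_{d-1}(f)$. The paper phrases the induction as ``induction over $d$'' rather than over the codimension $c=d-1-i$, but the step is identical (restrict to a hyperplane, keep $i$ fixed). Where the paper simply asserts that ``it is easy to see that the restriction $f|E$ is a nondegenerate d.c.\ aura'' and that $\mu_i^{d-1}(T_i(f)\cap\A_i^{d-1}(E))=\mu_i^{d-1}(T_i(f|E))$, you spell out the slice claim in some detail; the nondegeneracy part is also established later in the paper as part of Lemma~\ref{CL1}. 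One small caveat: your justification of $(x,\bar n/|\bar n|)\in\nor(g_E,0)$ via the convex identity $\partial(\varphi|_E)=\proj_{\vec E}\partial\varphi$ is a bit loose, since $\nor(f,0)$ is defined through the Clarke subdifferential $\partial^*f$ rather than through the convex pieces separately, and the approximating sequence $x_k\to x$ witnessing $(x,n)\in\nor(f,0)$ need not lie in $E$; but for d.c.\ functions this can be made rigorous, and the paper glosses over the same point.
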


\begin{proof}
For $i=d-1$, the assertion is given in Proposition~\ref{P_th}. For general $i\leq d-1$, we shall proceed by induction over $d$. For $d=2$ there is nothing to prove. If $d>2$ we apply the decomposition \eqref{int_geom}:
\begin{eqnarray*}
\mu_i^d(T_i(f))&=&\int_{T_{d-1}(f)}\int_{{\mathcal A}_i^{d-1}(E)}
\mu_i^{d-1}(T_i(f)\cap{\mathcal A}_i^{d-1}(E))\, \mu_{d-1}^d(dE)\\
&+&\int_{{\mathcal A}_{d-1}^d\setminus T_{d-1}(f)}\int_{{\mathcal A}_i^{d-1}(E)}
\mu_i^{d-1}(T_i(f)\cap{\mathcal A}_i^{d-1}(E))\, \mu_{d-1}^d(dE).
\end{eqnarray*}
The first summand vanishes since $\mu_{d-1}^d(T_{d-1}(f))=0$.
If, on the other hand, $E\in\A_{d-1}^d$ is not tangent to $f$ then it is easy to see that the restriction $f|E$ is a nondegenerate d.c.\ aura in the subspace $E$ and, by the induction assumption, $\mu_i^{d-1}(T_i(f)\cap\A_i^{d-1}(E))=\mu_i^{d-1}(T_i(M|E))=0$. Hence, the second summand vanishes as well, and the proof is finished.
\end{proof}

\begin{lemma}\label{vektory}
Suppose that $A_i\subset \A_{i}^{d}$ are such that $\nu_i^d(\A_{i}^{d}\setminus A_i)=0$ for every $i=0,...,d-1.$
Then there are linearly independent directions $v_1,...,v_d\in S^{d-1}$ and sets $E_1,...,E_d\subset\er$ such that 
\begin{enumerate}
\item $E_i$ is dense in $\er$ for every $i$ 
\item for every $1\leq k\leq d$ and every $1\leq i_1<\dots<i_k\leq d$ the affine subspace defined as
$$
\bigcap_{j=1}^{k} (v_{i_j}^{\bot}+\alpha_{i_j} v_{i_j})
$$
belongs to $A_{d-k}$ whenever $\alpha_j\in E_j.$
\end{enumerate}

\end{lemma}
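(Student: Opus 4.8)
The plan is to construct the directions $v_1,\ldots,v_d$ and the sets $E_1,\ldots,E_d$ greedily, one index at a time, exploiting the fact that every condition we need to impose is the requirement of avoiding a $\nu$-null or $\lambda$-null set. First I would recast the content of the two hypotheses. For each $k$ and each choice of indices $i_1<\cdots<i_k$, the map sending $(\alpha_{i_1},\ldots,\alpha_{i_k})$ to the affine $(d-k)$-subspace $\bigcap_{j=1}^k(v_{i_j}^\perp+\alpha_{i_j}v_{i_j})$ is (once the directions are fixed and linearly independent) an affine parametrization of the family of $(d-k)$-subspaces parallel to $L:=\bigcap_j v_{i_j}^\perp\in G(d,d-k)$. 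Provided $L\in A_{d-k}$ is not one of the finitely many ``bad'' directions — more precisely, provided $L$ is a point at which the fibre $\{z\in L^\perp: L+z\in \A_{d-k}^d\setminus A_{d-k}\}$ is $\lambda^{k}$-null — the set of $(\alpha_{i_1},\ldots,\alpha_{i_k})\in\er^k$ for which the subspace lands outside $A_{d-k}$ is itself $\lambda^k$-null, by the definition of $\mu_{d-k}^d$ together with Fubini. So the two tasks are: (a) choose linearly independent $v_1,\ldots,v_d$ so that every one of the finitely many subspaces $L=\bigcap_{j}v_{i_j}^\perp$ (over all subsets) is a ``good direction'' for the corresponding $A_{d-k}$; and (b) then choose the $E_i$'s.

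For task (a), I would pick $v_1,\ldots,v_d$ one at a time. Having chosen $v_1,\ldots,v_{m-1}$, the constraints on $v_m$ are: linear independence from the previous ones (a null set to avoid); and, for each subset $S\subseteq\{1,\ldots,m\}$ containing $m$, the subspace $\bigcap_{i\in S}v_i^\perp$ should be a Lebesgue density point of $A_{d-|S|}$ in the Grassmannian sense. Since $\nu_{d-|S|}^d(\A_{d-|S|}^d\setminus A_{d-|S|})=0$, the set of ``bad'' directions in $G(d,d-|S|)$ is $\nu$-null; pulling this back through the smooth submersion $v_m\mapsto\bigcap_{i\in S}v_i^\perp$ (defined on the open set where $v_1,\ldots,v_{m-1},v_m$ span an $m$-dimensional space, using that the previous $v_i$ are already fixed) gives a null set of forbidden $v_m$ on $S^{d-1}$. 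There are only finitely many subsets $S$, so the union of all forbidden sets for $v_m$ is still $\H^{d-1}$-null on $\sph$, hence we can pick a valid $v_m$. After $d$ steps we have the directions.

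For task (b), with the directions now fixed, for every subset $S=\{i_1<\cdots<i_k\}$ the set $N_S\subset\er^k$ of tuples $(\alpha_{i_1},\ldots,\alpha_{i_k})$ yielding a subspace outside $A_{d-k}$ is $\lambda^k$-null (by the argument of the first paragraph, using that $\bigcap_{i\in S}v_i^\perp$ is a good direction). By Fubini, for each coordinate the ``bad'' slices form a null set; but I actually want a single set $E_i$ working simultaneously for all $S$ containing $i$. The clean way: let $N=\bigcup_S N_S$ viewed appropriately inside $\er^d$ (embedding each $N_S$ by the coordinates indexed by $S$, times all of $\er$ in the other coordinates) — this is a finite union of null sets in $\er^d$, hence $\lambda^d$-null, so its complement is dense and in fact of full measure. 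I would then choose a countable dense subset $D\subset\er$ and argue that for a ``generic'' perturbation — or more simply, by a Baire/measure argument applied to the countably many points of $D^d$ — one can find dense sets $E_1,\ldots,E_d\subset\er$ with $E_1\times\cdots\times E_d$ disjoint from $N$; concretely, enumerate $D$ as $\{q_1,q_2,\ldots\}$ and build the $E_i$ inductively, at each stage adding to each $E_i$ a point as close as desired to the next target $q_m$ while keeping the finite product of the current sets disjoint from the (null, hence nowhere dense-complement-dense) set $N$, which is possible since avoiding a null set leaves a dense set of admissible choices in each coordinate.

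The main obstacle I anticipate is the bookkeeping in task (b): ensuring a \emph{single} family $E_1,\ldots,E_d$, each dense in $\er$, such that the full combinatorial product avoids all the $N_S$ simultaneously — a pointwise ``for a.e.\ $\alpha_i$'' statement is immediate from Fubini but does not directly yield dense $E_i$ with a product-type conclusion. The resolution is the inductive construction of the $E_i$ described above, using at each step that the admissible set of choices for a new coordinate value (keeping disjointness of the running finite product from the finitely many null sets) is dense, so one can approximate any prescribed rational target; taking the union over all steps makes each $E_i$ dense. Task (a) is routine once one notes that a finite union of $\H^{d-1}$-null subsets of $\sph$ has null complement and that the relevant pullback maps are submersions on the relevant open sets.
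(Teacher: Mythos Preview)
Your overall structure---separating the choice of directions (task (a)) from the choice of translation parameters (task (b)), and reducing (b) to finding dense $E_1,\ldots,E_d\subset\er$ whose product avoids a given $\lambda^d$-null set $N\subset\er^d$---matches the paper. But both tasks have genuine gaps as written.

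\emph{Task (a).} The map $v_m\mapsto\bigcap_{i\in S}v_i^\perp$ (earlier $v_i$ fixed) is \emph{not} a submersion onto $G(d,d-|S|)$ for $|S|\geq 2$: its image lies in the proper submanifold of $(d-|S|)$-planes contained in the fixed $(d-|S|+1)$-space $\bigcap_{i\in S\setminus\{m\}}v_i^\perp$, and a $\nu^d_{d-|S|}$-null set may have full measure on that submanifold. Concretely, in $\er^3$ let $A_1$ omit exactly the lines lying in a fixed plane $P$; if you first pick $v_1$ orthogonal to $P$, then $v_1^\perp\cap v_2^\perp\subset P$ for every $v_2$, so no admissible $v_2$ exists. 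The paper avoids this by choosing all $d$ directions at once under the product measure on $(\sph)^d$: the map $(v_1,\ldots,v_d)\mapsto v_{i_1}^\perp\cap\cdots\cap v_{i_k}^\perp$ pushes product Haar measure forward to Haar measure on $G(d,d-k)$, so the bad set really pulls back to a null set, and one then selects any good tuple.

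\emph{Task (b).} Your greedy step claims one can add a point to each $E_i$ near the target because ``avoiding a null set leaves a dense set of admissible choices in each coordinate.'' But once finitely many values $y_j\in E_j$ ($j\neq i$) are fixed, the forbidden set for the new $i$th coordinate is a finite union of \emph{slices} of $N$, and slices of a $\lambda^d$-null set need not be $\lambda^1$-null---they can be all of $\er$. (Your parenthetical ``null, hence nowhere dense'' also conflates measure and category.) The paper's fix is a short induction on $d$: by Fubini the set $Z$ of first coordinates $z$ whose slice has full measure in $\er^{d-1}$ is itself of full measure; take a \emph{countable} dense $E_1\subset Z$, set $E':=\bigcap_{z\in E_1}E_z$ (still full measure, being a countable intersection), and recurse on $E'$ to obtain $E_2,\ldots,E_d$. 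Your reduction to this claim is correct; only the construction needs to be replaced by this inductive one.
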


\begin{proof}

Define a measure $\mu$ on $(S^{d-1})^d$ as a product measure of $d$ copies of $(d-1)$-dimensional Hausdorff measure on $S^{d-1}.$
First note that the set of all linearly independent $d$-tuples $v_1,...,v_d\in S^{d-1}$ has a full measure with respect to $\mu.$
Next observe that from the definition of $\mu_i^d$ one can see that there is a set $G_i\subset G(d,i)$ of full $\nu_i^d$ measure such that for every $i$ we have that
$\H^{d-1}$-almost every translation of every $g\in G_i$ belongs to $A_i.$

Consider the set 
$$
T:=\{(v_1,...,v_d)\in (S^{d-1})^d: v_{i_1}^{\bot}\cap\dots\cap v_{i_k}^{\bot}\in G_{d-k}, 1\leq k\leq d, 1\leq i_1<\dots<i_k\leq d\}.
$$
From the representation
$$
T:=\bigcap_{k=1}^{d}\bigcap_{1\leq i_1<\dots<i_k\leq d}\{(v_1,\ldots,v_d)\in (S^{d-1})^d: v_{i_1}^{\bot}\cap\dots\cap v_{i_k}^{\bot}\in G_{d-k}\}=:T^{k}_{i_1,\ldots,i_k}.
$$
using the fact that every $T^{k}_{i_1,\ldots,i_k}$ has full measure we see that $T$ has full measure as well.
In particular, we can choose linearly independent directions $v_1,\ldots,v_d\in T$.

Now, form the definition of $T^{k}_{i_1,\ldots,i_k}$ and the definition of $G_{d-k}$ we see that we can always find a corresponding set $E^{k}_{i_1,\ldots,i_k}\subset\er^d$ of full measure such that
$$
\bigcap_{j=1}^{k} (v_{i_j}^{\bot}+\alpha_{i_j} v_{i_j})
$$
belongs to $A_{d-k}$ whenever $(\alpha_1,\ldots,\alpha_d)\in E^{k}_{i_1,\ldots,i_k}.$
Put
$$
E:=\bigcap_{k=1}^{d}\bigcap_{1\leq i_1<\dots<i_k\leq d}E^{k}_{i_1,\ldots,i_k}.
$$

To finish the proof it suffices to prove the following claim:
\begin{claim}
Suppose that $E\subset \er^d$ has full measure. Then there are $E_1,\ldots,E_d$ such that $\overline{E_1}=\dots=\overline{E_l}=\er$ and $E_1\times\dots\times E_l\subset E.$
\end{claim}
To prove the claim we will proceed by induction by $d.$
The case $d=1$ follows directly from the fact that every subset of $\er$ of full measure is dense.
Suppose now that the claim is true up to some $k$ and we need to prove it for the case $d=k+1.$
First, from the Fubini theorem we know that there is a set $Z$ in $\er$ of full measure such that for every $z\in Z$ the slice
$$
E_z=\{x\in\er^{k}:(z,x)\in E\}
$$ 
has full measure.
Since $Z$ is dense and $\er^{d-1}$ is separable we can find a countable dense set $E_1\subset Z.$
Put 
$$
E'=\bigcap_{z\in Z'} E_z,
$$
it follows that $E'$ is a set of full measure in $\er^{k}$ and $Z'\times E'\subset E.$
By induction procedure we know that there are $E_2,\ldots,E_{k+1}$ such that $\overline{E_2}=\dots=\overline{E_{k+1}}=\er$ and $E_2\times\dots\times E_{k+1}\subset E'.$
Now,  $E_1\times\dots\times E_{k+1}\subset E.$
\end{proof}

\begin{proof}[Proof of Proposition~\ref{Vitali}]
Let $v_1,\ldots,v_d$ be the unit vectors from Lemma~\ref{vektory} constructed for the sets $A_i={\mathcal A}_i^d\setminus T_i(f)$. Then, it is easy to see that
$$\left\{\bigcap_{i=1}^d (H_{v_i,\beta_i}\cap H_{-v_i,\alpha_i}):\, \alpha_i<\beta_i,\, \alpha_i,\beta_i\in E_i,\, i=1,\ldots,d\right\}$$
is a Vitali system of parallelograms not touching $f$.
\end{proof}

\begin{proposition}   \label{P_local}
If $A$ is a compact WDC subset of $\er^d$ then its normal cycle has an index function $\iota_{N_A}=:\iota_A$ fulfilling
for $\H^{d-1}$-almost all $n\in\sph$ and all $x\in\er^d$:
\begin{equation}  \label{Local_index}
\iota_{A}(x,n)=\lim_{K\to x}\esslim\limits_{\delta\to 0_+}
\left(\chi(A\cap K\cap H_{-n,-t+\delta})-\chi(A\cap K\cap H_{-n,-t-\delta})\right).
\end{equation}
\end{proposition}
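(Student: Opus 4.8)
The plan is to evaluate the right-hand side of \eqref{Local_index} by rewriting the Euler-characteristic differences as slices of the normal cycle of the localised set $A\cap K$, and then passing to the limits $\delta\searrow0$ and $K\to x$. Throughout put $t:=x\cdot n$, so that $H_{-n,-t+\delta}=\{y:y\cdot n\ge t-\delta\}$, $H_{-n,-t-\delta}=\{y:y\cdot n\ge t+\delta\}$, and the hyperplane $\{y\cdot n=t\}$ passes through $x$. Fix $x\in\er^d$ and a direction $n\in\sph$ belonging to the full-measure set on which the slices used below are genuine $0$-currents, i.e.\ finitely supported signed measures. For $K\in\mathcal{V}$ with $x\in\intr K$, Lemma~\ref{L_AK} tells us that $A\cap K$ is WDC, that $f+d_K$ is a nondegenerate d.c.\ aura for $A\cap K$ (so $N_{A\cap K}=N(f+d_K,0)$), and that $N_{A\cap K}\llc\intr K=N_A\llc\intr K$; in particular $\langle N_{A\cap K},\pi_1,n\rangle$ and $\langle N_A,\pi_1,n\rangle$ coincide on $\intr K\times\sph$.

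First I would apply \eqref{E_chi} of Lemma~\ref{L_tech} to the WDC set $A\cap K$ with aura $f+d_K$ and the vector $-n$: for $\H^1$-almost every $s\in\er$,
\[
\chi\bigl(A\cap K\cap H_{-n,-s}\bigr)=\sum_{x':\,x'\cdot n>s}\iota_{N_{A\cap K}}(x',n).
\]
Subtracting this identity at $s=t-\delta$ and at $s=t+\delta$ gives, for almost every $\delta>0$,
\[
\chi\bigl(A\cap K\cap H_{-n,-t+\delta}\bigr)-\chi\bigl(A\cap K\cap H_{-n,-t-\delta}\bigr)=\sum_{t-\delta<x'\cdot n\le t+\delta}\iota_{N_{A\cap K}}(x',n).
\]
Since $x'\mapsto\iota_{N_{A\cap K}}(x',n)$ has finite support, the right-hand side is constant for all small $\delta>0$, so the essential limit over $\delta\searrow0$ exists and equals $\sum_{x'\cdot n=t}\iota_{N_{A\cap K}}(x',n)=\langle N_{A\cap K},\pi_1,n\rangle\bigl(\{y\cdot n=t\}\times\sph\bigr)$.

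Next I would let $K\to x$ inside $\mathcal{V}$. Since $\spt N_{A\cap K}\subset\partial(A\cap K)\times\sph$, a point $x'$ contributing to the last sum lies in $\partial(A\cap K)\cap\{y\cdot n=t\}$, and, as $x\in\intr K$, either $x'\in\partial A\cap\intr K$ or $x'\in A\cap\partial K$. On $\intr K$ the coincidence of slices gives $\iota_{N_{A\cap K}}(x',n)=\iota_A(x',n)$, and since $\langle N_A,\pi_1,n\rangle$ has finite support and $x\in\intr K\cap\{y\cdot n=t\}$, for $K$ small enough the only contributing point in $\intr K$ is $x$ itself, so that part equals $\iota_A(x,n)$. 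For $x'\in A\cap\partial K$ with $x'\cdot n=t$ the hyperplane $\{y\cdot n=t\}$ meets $\intr K$ (it contains $x$), so $n$ is not an outer normal direction of $K$ at $x'$; hence if $x'\in\intr A$ then $N_{A\cap K}$ agrees with $N_K$ near $(x',\cdot)$ and $(x',n)\notin\nor K$, so the contribution is $0$.

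The hard part will be the remaining case $x'\in\partial A\cap\partial K$. Here I would use the inclusion $\partial^{*}(f+d_K)\subset\partial^{*}f+\partial^{*}d_K$ together with two ingredients: Proposition~\ref{P_th}, that the family of halfspaces touching $N(f,0)$ is $\H^{d}$-null, and the fact that for $\H^{d-1}$-almost every $n$ only finitely many points carry the slice $\langle N_{A\cap K},\pi_1,n\rangle$; in conjunction with the non-touching condition defining $\mathcal{V}$---which keeps $f+d_K$ nondegenerate and separates the normal directions of $f$ at $x'$ from those of $K$ at $x'$---this should force, for $K$ small, that no such $x'$ contributes, or that the signed contributions cancel, so that the $\partial K$-part is eventually $0$. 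As all the numbers in play are integers, their limits are then genuine; combining this with the second and third paragraphs yields \eqref{Local_index}. I expect the control of these $\partial A\cap\partial K$ contributions---and the accompanying care that the exceptional set of directions $n$ can be taken independent of $K$---to be the main technical burden of the proof.
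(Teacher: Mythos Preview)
Your route is the same as the paper's: apply \eqref{E_chi} from Lemma~\ref{L_tech} to the localised set $A\cap K$ (with aura $f+d_K$), take the essential limit in $\delta$ to obtain $\sum_{y:\,y\cdot n=t}\iota_{A\cap K}(y,n)$, and then invoke $N_{A\cap K}\llc\intr K=N_A\llc\intr K$ (Lemma~\ref{L_AK}) to identify the interior contribution with $\iota_A(x,n)$ once $K$ is small.

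Where you differ from the paper is only in scrupulousness, not in method. The paper's proof asserts flatly that ``the sum in \eqref{ind_ball} reduces to a single summand $y=x$ if $K\ni x$ is small enough,'' with no separate discussion of contributions from points $y\in\partial K$, and no mention of whether the exceptional set of directions $n$ can be taken independently of $K$. Your case analysis for $y\in\intr A\cap\partial K$ (using that $n$ cannot be an outer normal of $K$ at such $y$ because the hyperplane $\{y\cdot n=t\}$ meets $\intr K$) is correct and already goes beyond what the paper writes out. Your expectation that the $\partial A\cap\partial K$ case and the $K$-uniformity of the null set of $n$ are the genuine technical burden is well founded: these are precisely the points the paper passes over without comment. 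You are not missing an argument the paper supplies; you have isolated a step the authors treat as routine. (One practical remark: the Vitali system $\mathcal V$ built in Proposition~\ref{Vitali} can be taken countable---choose countable dense $E_i$---which at least disposes of the $K$-uniformity issue by a countable union of null sets.)
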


\begin{proof}
Applying \eqref{E_chi}, we see that for a.a.\ $n\in S^{d-1}$, all $t\in\er$ and a.a.\ $\delta>0$,
$$\chi(A\cap H_{-n,-t+\delta})-\chi(A\cap H_{-n,-t-\delta})= 
\sum_{x:\, |x\cdot n-t|<\delta}\iota_{A}(x,n),$$
with a finite number of summands. Consequently, 
\begin{equation}
\sum_{x:\, x\cdot n=t}\iota_{A}(x,n)=\esslim_{\delta\to 0_+}\left(\chi(A\cap H_{-n,-t+\delta})-\chi(A\cap H_{-n,-t-\delta})\right).
\end{equation}
Now, fix a point $x\in\er^d$ with $x\cdot n=t$ and such that $\iota_{A}(x,n)\neq 0$. We intersect $A$ with a sufficiently small set $K\in{\mathcal V}$ and apply the same procedure as above with $A\cap K$ instead of $A$. We get 
\begin{eqnarray}  \label{ind_ball}
\lefteqn{\sum_{y:y\cdot n=t}\iota_{A\cap K}(y,n)}\\
&=&\esslim_{\delta\to 0_+}\left(\chi(A\cap K\cap H_{-n,-t+\delta})-\chi(A\cap K\cap H_{-n,-t-\delta})\right).\nonumber
\end{eqnarray}
Due to the fact that $N_{A\cap K}\llc\intr K=N_A\llc\intr K$ (Lemma~\ref{L_AK}), we obtain
$\iota_{A\cap K}(y,n)=\iota_{A}(y,n)$ for $y\in\intr K$, the sum in \eqref{ind_ball} reduces to a single summand $y=x$ if $V\ni x$ is small enough, and
$$\iota_{A}(x,n)=\esslim_{\delta\to 0_+}\left(\chi(A\cap K\cap H_{-n,-t+\delta})-\chi(A\cap K\cap H_{-n,-t-\delta})\right)$$
for sufficiently small $K\in{\mathcal V}$, which yields the desired result.
\end{proof}

\begin{theorem}  \label{T_additivity}
Let $A,B\subset\er^d$ be two compact WDC sets with nondegenerate d.c.\ auras $f,g$, respectively, and assume that $f$ and $g$ do not touch. Then $A\cup B$ and $A\cap B$ are WDC as well, and the normal cycles satisfy
$$N_A+N_B=N_{A\cap B}+N_{A\cup B}.$$
\end{theorem}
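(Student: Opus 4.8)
The plan is to exhibit nondegenerate d.c.\ auras for $A\cap B$ and for $A\cup B$, and then to show that the Legendrian cycle $T:=N_A+N_B-N_{A\cap B}-N_{A\cup B}$ is the zero current by computing its slices. That $A\cap B$ is WDC is immediate from Proposition~\ref{P_int_aur}: $f+g$ is a nondegenerate d.c.\ aura for it. For $A\cup B$ the natural candidate aura is $h:=\min(f,g)$, which is d.c.\ (a standard fact on d.c.\ functions), proper, nonnegative, satisfies $h^{-1}\{0\}=A\cup B$, and is Monge-Amp\`ere by Theorem~\ref{T1}; thus it is an aura for $A\cup B$. The only point needing work is nondegeneracy. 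For $x$ near $A\cup B$ with $h(x)>0$: if $f(x)\ne g(x)$ then near $x$ the function $h$ agrees with the smaller of $f,g$, so $|w|$ is bounded below for every $w\in\partial^*h(x)$ by the nondegeneracy of $f$ or of $g$; if $f(x)=g(x)$ then $x$ lies outside both $A$ and $B$ and close to $\partial A\cap\partial B$, and here $\partial^*h(x)\subset\co(\partial^*f(x)\cup\partial^*g(x))$ (the Clarke subdifferential of a pointwise minimum at a coincidence point). By the non-touching hypothesis the argument used in the proof of Proposition~\ref{P_int_aur} yields $\delta>0$ and a neighbourhood $W$ of $\partial A\cap\partial B$ with $u\cdot v\ge(-1+\delta)|u||v|$ whenever $x\in W$, $u\in\partial^*f(x)$, $v\in\partial^*g(x)$; since any $w\in\co(\partial^*f(x)\cup\partial^*g(x))$ has the form $\lambda u+(1-\lambda)v$ with $|u|,|v|\ge\ep$, a short computation bounds $|w|$ below by a positive constant. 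Hence $h$ is a nondegenerate d.c.\ aura and $A\cup B$ is WDC.

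Next, as a combination of Legendrian cycles $T$ is a compactly supported Legendrian cycle, and by Theorem~\ref{T2} we may take $N_{A\cap B}=N(f+g,0)$ and $N_{A\cup B}=N(h,0)$. Since $\spt T\subset\nor(f,0)\cup\nor(g,0)\cup\nor(f+g,0)\cup\nor(h,0)$, the halfspaces touching $T$ correspond to a subset of $E_f\cup E_g\cup E_{f+g}\cup E_h$, which is $\H^d$-null by Proposition~\ref{P_th}; thus $T$ satisfies \eqref{th}. Moreover, for $\H^d$-almost all $(v,t)$ the sets $A\cap H_{v,t}$, $B\cap H_{v,t}$, $(A\cap B)\cap H_{v,t}$, $(A\cup B)\cap H_{v,t}$ are all WDC (Lemma~\ref{L_aura_cap_H} applied in turn to $f,g,f+g,h$, the union of the exceptional sets being again $\H^d$-null), so applying \eqref{E-NC} to each of the WDC sets $A,B,A\cap B,A\cup B$ and using linearity of slicing gives, for a.a.\ $(v,t)$,
\begin{align*}
\langle T,\pi_1,-v\rangle(H_{v,t}\times\sph)
&=\chi(A\cap H_{v,t})+\chi(B\cap H_{v,t})\\
&\qquad-\chi((A\cap B)\cap H_{v,t})-\chi((A\cup B)\cap H_{v,t}).
\end{align*}
Because $(A\cap B)\cap H_{v,t}=(A\cap H_{v,t})\cap(B\cap H_{v,t})$ and $(A\cup B)\cap H_{v,t}=(A\cap H_{v,t})\cup(B\cap H_{v,t})$, the additivity (inclusion--exclusion property) of the Euler characteristic, valid for the compact WDC sets involved, makes the right-hand side vanish: $\langle T,\pi_1,-v\rangle(H_{v,t}\times\sph)=0$ for a.a.\ $(v,t)$.

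By \eqref{E_slice} the left-hand side equals $\sum_{x:\,x\cdot v\le t}\iota_T(x,-v)$, which for a.a.\ $v$ is a step function of $t$ with finitely many jumps, the jump at $t=x\cdot v$ being $\iota_T(x,-v)$; since it vanishes for a.a.\ $t$, all its jumps vanish, so $\iota_T(x,n)=0$ for a.a.\ $n$ and all $x$. Then \eqref{Tg} gives $T\llc\varphi_0=0$, and Fu's fundamental uniqueness theorem \cite[Theorem~4.1]{Fu1} yields $T=0$, i.e.\ $N_A+N_B=N_{A\cap B}+N_{A\cup B}$. I expect the main obstacle to be the nondegeneracy of $\min(f,g)$: identifying the Clarke subdifferential of a pointwise minimum at coincidence points and extracting a uniform positive lower bound on $|w|$ from the non-touching hypothesis. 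The appeal to additivity of $\chi$ on WDC sets and to the uniqueness theory is comparatively routine.
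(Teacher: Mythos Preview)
Your proof is correct. The nondegeneracy argument for $h=\min(f,g)$ follows the paper's line; the paper organizes the case split via an explicit open cover $(U\cap\{f<g\})\cup(V\cap\{g<f\})\cup(W\cap U\cap V)$ of $A\cup B$ and carries out the same convex-combination estimate $|\lambda u+(1-\lambda)v|\geq\ep\sqrt{\delta/2}$, whereas you phrase the coincidence case by noting that points with $f(x)=g(x)>0$ near $A\cup B$ accumulate only on $\partial A\cap\partial B$ (this is true: any limit point $y$ has $f(y)=g(y)=0$, hence $y\in A\cap B$, and $y\in\partial(A\cup B)$ forces $y\in\partial A\cap\partial B$). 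For the additivity identity you take a genuinely different route: rather than invoking the local index formula (Proposition~\ref{P_local}) to obtain $\iota_A+\iota_B=\iota_{A\cap B}+\iota_{A\cup B}$ pointwise, you apply the defining property~\eqref{E-NC} directly to each of the four normal cycles, use inclusion--exclusion for $\chi$ on the halfspace sections $A\cap H_{v,t}$, $B\cap H_{v,t}$ to annihilate the slice $\langle T,\pi_1,-v\rangle(H_{v,t}\times\sph)$, and then read off $\iota_T\equiv 0$ from~\eqref{E_slice}. Both arguments conclude via Fu's uniqueness theorem; yours is more economical in that it sidesteps the Vitali-system machinery behind Proposition~\ref{P_local}, and indeed the paper remarks immediately after the theorem that the local index formula is not essential for this step.
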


\begin{proof}
We know already that, under the given assumptions, $A\cap B$ is a WDC set, see Proposition~\ref{P_int_aur}. Clearly, $h:=\min (f,g)$ is a d.c.\ aura for $A\cup B$. We shall show that $h$ is nondegenerate. The procedure will be similar to that used in the proof of Proposition~\ref{P_int_aur}.

Choose $\ep,\delta>0$ and $U,V,W$ open neighbourhoods of $A,B,\partial A\cap\partial B$, respectively,  as in the proof of Proposition~\ref{P_int_aur}. We consider the open cover
$$A\cup B\subset G:=(U\cap\{ f<g\})\cup (V\cap\{g<f\})\cup (W\cap U\cap V).$$
Take $x\in G\setminus (A\cup B)$ and $w\in\partial^*h(x)$. If $x\in U\cap\{ f<g\}\setminus A$ then $\partial^*h(x)=\partial^*f(x)$ and, hence, $|w|\geq\ep$ by the nondegeneracy of $f$. Analogously, if $x\in V\cap\{ g<f\}\setminus B$ then $\partial^*h(x)=\partial^*g(x)$ and $|w|\geq\ep$ by the nondegeneracy of $g$. If, finally, $x\in (W\cap U\cap V)\setminus (A\cup B)$ then $w=\lambda u+(1-\lambda)v$ for some $u\in\partial^*f(x)$ and $v\in\partial^*g(x)$ and $\lambda\in[0,1]$, and we get
\begin{eqnarray*}
|\lambda u+(1-\lambda)v|&=&\sqrt{\lambda^2|u|^2+(1-\lambda)^2|v|^2+2\lambda(1-\lambda)u\cdot v}\\
&\geq&\sqrt{\lambda^2|u|^2+(1-\lambda)^2|v|^2+2\lambda(1-\lambda)(-1+\delta)|u||v|}\\
&\geq&\sqrt{+2\lambda(1-\lambda)\delta|u||v|}\\
&\geq&\sqrt{\delta/2}\ep,
\end{eqnarray*}
which shows the nondegeneracy of $h$.

It remains to verify the additivity. Applying \eqref{Tg} to the Legendrian cycles $N_A+N_B$ and $N_{A\cap B}+N_{A\cup B}$, we get
\begin{eqnarray*}
(N_A+N_B)(\phi\varphi_0)&=&\cO_{d-1}^{-1}\int_{\sph}\sum_{x\in\er^d}\phi(x,n)(\iota_A+\iota_B)(x,n)\, \H^{d-1}(dn),\\
(N_{A\cap B}+N_{A\cup B})(\phi\varphi_0)&=&\cO_{d-1}^{-1}\int_{\sph}\sum_{x\in\er^d}\phi(x,n)(\iota_{A\cap B}+\iota_{A\cup B})(x,n)\, \H^{d-1}(dn).
\end{eqnarray*}
The local form of the index function (Proposition~\ref{P_local}) and the additivity of the Euler-Poincar\'e characteristic yield the additivity of the index function:
$$\iota_A(x,n)+\iota_B(x,n)=\iota_{A\cap B}(x,n)+\iota_{A\cup B}(x,n)$$
for $\H^{d-1}$-almost all $n\in\sph$ and all $x\in\er^d$. Consequently, we have
$$(N_A+N_B)(\phi\varphi_0)=(N_{A\cap B}+N_{A\cup B})(\phi\varphi_0)$$
for all $\phi\in C^\infty_c(\er^d\times\sph)$, which implies the additivity, since any Legedrian cycle $T$ is determined by its restriction to the Gauss curvature form.
\end{proof}

\begin{remark} \rm
In fact, the local form if the index $\iota_A$ of $N_A$ is not needed for the proof of additivity. The Fu's proof of \cite[Theorem~4.2]{Fu94} (stated for subanalytic sets) could be applied instead.
\end{remark}

We show now that even locally WDC sets admit normal cycles.

\begin{theorem}\label{locth}
Any compact locally WDC set admits a normal cycle.
\end{theorem}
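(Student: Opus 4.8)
The plan is to exploit the purely local character of the defining property of a normal cycle, together with the fact (Theorem~\ref{T2}) that compact WDC sets already admit one, and glue the local pieces using a partition-of-unity / finite-cover argument on the compact set $A$. Let $A$ be a compact locally WDC set. For each $x\in A$ pick an open neighbourhood $U_x$ and a compact WDC set $A_x$ with $A\cap U_x=A_x\cap U_x$; by compactness extract a finite subcover $U_{x_1},\dots,U_{x_N}$ of $A$, with associated WDC sets $A_1,\dots,A_N$ admitting normal cycles $N_{A_1},\dots,N_{A_N}$ by Theorem~\ref{T2}. The first step is to observe that the currents $N_{A_i}$ agree on the overlaps in the appropriate sense: more precisely, on $U_i\cap U_j$ both $A_i$ and $A_j$ coincide with $A$, so intersecting with small convex bodies $K\subset U_i\cap U_j$ and invoking Lemma~\ref{L_AK} (applied in turn to $A_i$ and to $A_j$) together with the local index formula of Proposition~\ref{P_local} shows that $\iota_{N_{A_i}}(y,n)=\iota_{N_{A_j}}(y,n)$ for all $y\in U_i\cap U_j$ and $\H^{d-1}$-a.a.\ $n$, since the right-hand side of \eqref{Local_index} depends only on $A$ near $y$. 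Hence $N_{A_i}\llc(W\times\sph)=N_{A_j}\llc(W\times\sph)$ for any open $W$ whose closure sits inside $U_i\cap U_j$ and does not touch the relevant auras.

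The second step is the actual construction of $T$. Choose a smooth partition of unity $\{\rho_i\}_{i=1}^N$ subordinate to the cover $\{U_i\}$ of a neighbourhood of $A$, with $\sum_i\rho_i\equiv1$ near $A$, and set
$$
T:=\sum_{i=1}^{N}N_{A_i}\llc\big((\rho_i\circ\pi_0)\big),
$$
i.e.\ $T(\phi)=\sum_i N_{A_i}((\rho_i\circ\pi_0)\,\phi)$. Because the $N_{A_i}$ agree on overlaps (Step~1), $T$ does not depend on the choice of $\{\rho_i\}$ and, crucially, on any small open set $W\Subset U_i$ one has $T\llc(W\times\sph)=N_{A_i}\llc(W\times\sph)$; in particular $T$ is, locally near every point of its support, a genuine normal cycle of a WDC set, so $T$ is integer multiplicity rectifiable, $T\llc\alpha=0$ (the Legendrian condition is local), and $\spt T\subset\partial A\times\sph$. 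One must still check $\partial T=0$: writing $\partial T=\sum_i\partial\big(N_{A_i}\llc(\rho_i\circ\pi_0)\big)$ and using $\partial(N_{A_i})=0$ together with the Leibniz rule for restriction by a function, the boundary terms are $-\sum_i N_{A_i}\llc d(\rho_i\circ\pi_0)=-\sum_i N_{A_i}\llc\big((d\rho_i\circ\pi_0)\cdot(\pi_0)^\#(\cdot)\big)$; on the common refinement where all $N_{A_i}$ coincide this equals $-\,(\text{common }N)\llc d\big(\sum_i\rho_i\circ\pi_0\big)=0$ since $\sum_i\rho_i\equiv1$ near $A\supset\spt T$. (One phrases this carefully using that all the relevant $N_{A_i}$ restrict to the same current on a neighbourhood of $\spt T\cap(\spt d\rho_i\times\sph)$.)

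The third step verifies that $T$ is the normal cycle of $A$ in the sense of Definition~\ref{D-NC}. Condition \eqref{th} (almost all halfspaces do not touch $T$) follows from $\spt T\subset\bigcup_i(\spt N_{A_i}\cap(U_i\times\sph))\subset\bigcup_i\nor(f_i,0)$ and Proposition~\ref{P_th}, giving a finite union of $\H^d$-null sets of halfspaces. For \eqref{E-NC} I would instead verify the equivalent local condition via the index function: by construction $\iota_T(x,n)=\iota_{N_{A_i}}(x,n)$ for $x\in U_i$, and by Proposition~\ref{P_local} applied to the WDC set $A_i$ this equals $\lim_{K\to x}\esslim_{\delta\to0_+}\big(\chi(A_i\cap K\cap H_{-n,-t+\delta})-\chi(A_i\cap K\cap H_{-n,-t-\delta})\big)$, which for $K$ small enough to lie inside $U_i$ only sees $A_i\cap K=A\cap K$; hence $\iota_T$ satisfies exactly the local formula \eqref{Local_index} with $A$ in place of $A_i$. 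Summing over a partition into halfspaces and using additivity of $\chi$ as in the proof of Lemma~\ref{L_tech} / Proposition~\ref{P_local} recovers \eqref{E_chi} for $A$, and then the argument of Lemma~\ref{L_tech} and Lemma~\ref{L_equiv} gives \eqref{E-NC}. Uniqueness is automatic from Remark~\ref{rem-uniq}.

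The main obstacle is Step~2: making rigorous the claim that the patched current $T$ is integer rectifiable with $\partial T=0$ and is independent of the partition of unity, despite the $N_{A_i}$ only being defined on different neighbourhoods and only provably equal on overlaps up to the subtlety that Lemma~\ref{L_AK} requires the auras not to touch $d_K$. The clean way around this is to first reduce, using Proposition~\ref{Vitali}, to a Vitali system of convex bodies adapted simultaneously to all finitely many auras $f_1,\dots,f_N$ (a finite intersection of Vitali systems is again a Vitali system), so that all the local comparisons $N_{A_i}\llc\intr K=N_{A_j}\llc\intr K$ hold on a common cofinal family of $K$'s; then the equality of index functions on overlaps is unambiguous and the gluing is the standard one for currents defined by a locally consistent family. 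I expect the bookkeeping here to be the only genuinely delicate part; everything else is an application of results already established in the previous sections.
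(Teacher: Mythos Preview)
Your construction of $T$ via a partition of unity, the Leibniz-rule computation of $\partial T=0$, the verification of the Legendrian condition and of \eqref{th}, and the observation (via Proposition~\ref{P_local}) that $\iota_T(x,n)$ depends only on $A$ near $x$ are all essentially the paper's argument, carried out in somewhat more detail than the paper gives.

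The gap is in your Step~3, at the sentence ``Summing over a partition into halfspaces and using additivity of $\chi$ as in the proof of Lemma~\ref{L_tech} / Proposition~\ref{P_local} recovers \eqref{E_chi} for $A$.'' The proofs of Lemma~\ref{L_tech} and Proposition~\ref{P_local} go through Lemma~\ref{L_aura_cap_H} and Proposition~\ref{P_aura}, both of which require a \emph{global} nondegenerate d.c.\ aura for $A$; this is exactly what a merely locally WDC set need not have. Knowing that $\iota_T$ satisfies the local formula \eqref{Local_index} does not by itself give $\sum_{x:\,x\cdot v\leq t}\iota_T(x,-v)=\chi(A\cap H_{v,t})$: you still need a global device to assemble the local Euler-characteristic increments into $\chi(A\cap H_{v,t})$, and ``partition into halfspaces'' does not supply one.

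The paper closes this gap with a tool you mention but do not deploy at this point. Using Lemma~\ref{vektory} (not merely Proposition~\ref{Vitali}) it produces a \emph{finite} cover of $A$ by convex bodies $K_1,\dots,K_k$ such that each $K_j$ sits inside some $U_i$ and a simultaneous transversality condition holds between all the $\nor(f_i,0)$ and all the $\nor K_j$. This guarantees that each $A\cap K_j$ (and, via Proposition~\ref{P_int_aur}, every multiple intersection $A\cap K_{j_1}\cap\cdots\cap K_{j_l}$) is genuinely WDC, so \eqref{E-NC} is already known for these pieces by Theorem~\ref{T2}. Inclusion--exclusion on both sides, using additivity of the slice on the left and of $\chi$ on the right, then yields \eqref{E-NC} for $A$. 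Your Vitali-system remark gives local comparisons between the $N_{A_i}$; what is missing is this finite global cover with transversality to pass from local to global.
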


\begin{proof}
Let $A\subset\er^d$ be compact and locally WDC, i.e., there exists a finite cover $A\subset\bigcup_{i=1}^mU_i$ od $A$ by open sets $U_i$ and compact WDC sets $A_i$ such that $A\cap U_i=A_i\cap U_i$, $i\leq m$. Let $(g_i)$ be a smooth partition of unity such that $\spt g_i\subset U_i$, $i=1,\ldots,m$. Each set $A_i$ has a nondegenerate d.c.\ aura $f_i$ and a normal cycle $N_{A_i}=N(f_i,0)$ by Theorem~\ref{T2}. We shall show that
$$N_A:=\sum_{i=1}^mN_{A_i}\llc g_i$$
is the normal cycle of $A$.

It is easy to see that $N_A$ is Legendrian and that $\partial N_A=0$. Also, property \eqref{th} is obvious. It remains to verify \eqref{E-NC}. Applying Proposition~\ref{P_local}, we see that the index function $\iota_{A_i}(x,n)$ of $A_i$ is independent of $i$ if $x\in U_i$; let us denote it by $\iota_A(x,n)$. Note that, by construction, $\iota_A$ fulfills the local form \eqref{Local_index}.

Applying Proposition~\ref{vektory}, we can find finitely many convex compact sets $K_1,\ldots, K_k$ such that:
\begin{enumerate}
\item[(i)] $A\subset\bigcup_{i=1}^k K_i$,
\item[(ii)] any $K_i$ is contained in some $U_j$, $1\leq j\leq m$,
\item[(iii)] if $x\in\er^d$, $(x,u_i)\in\nor(f_i,0)$ and $(x,v_j)\in\nor K_j$ for all $i\in I$ and $j\in J$, where $I,J$ are some subsets of $\{1,\ldots,m\}$, $\{1,\ldots,k\}$, respectively, then all the vectors $u_i,\, i\in I, v_j,\, j\in J$, are linearly independent.
\end{enumerate}
Since each $A\cap K_i$ is WDC, we have for almost all $(v,t)\times\sph\times\er$,
$$\langle N_A,\pi_1,-v\rangle((H_{v,t}\cap K_i)\times\sph)=\chi(A\cap K_i\cap H_{v,t})$$
(cf.\ Lemma~\ref{L_AK}).
Applying additivity on both sides, we get formula \eqref{E-NC} for $A$.
\end{proof}

\section{Crofton formula} \label{S-CF}

Recall that if a compact set $A\subset\er^d$ admits a normal cycle $N_A$ then, for $k=0,1,\ldots,d-1$, the $k$th total curvature of $A$ is defined as 
$$C_k(A)=N_A(\varphi_k),$$ 
and define additionally
$$C_d(A)=\H^d(A).$$

If $E\in\A_j^d$ is an affine $j$-subspace of $\er^d$, it can be clearly identified with $\er^j$ and we can consider the notions of WDC sets and curvature measures relatively in $E$. Note that the orientations of ``gradient currents'' $[df]$, as well as of the Lipschitz-Killing differential forms $\varphi_k$ in $E$ depend on the orientation of $E$ (given e.g. by the volume form in $E$). Nevertheless, the curvature measures of a WDC subset of $E$ do not depend on the chosen orientation of $E$.

Given $v\in\sph$, we denote the following mappings:
\begin{eqnarray*}
g_v:&\, (x,n)\mapsto x\cdot v,\quad& (x,n)\in\er^d\times\sph,\\
h_v:&\, (x,n)\mapsto (x,p_{v^\perp}n), \quad&  (x,n)\in\er^d\times\sph,\\
f_v:&\, (x,n)\mapsto \nu(h_v(x,n)), \quad&  (x,n)\in\er^d\times(\sph\setminus\{-v,v\})
\end{eqnarray*}
(recall that $\nu(x,n)=(x,n/|n|)$).

With a pair $(v,t)\in\sph\times\er$, we associate the affine $(d-1)$-subspace
$$E_{v,t}=\{x\in\er^d:\, x\cdot v=t\}.$$
We assign an orientation to $E_{v,t}$ by the volume $(d-1)$-form $v\lrc\Omega_d$.
We say that a nondegenerate aura $f$ in $\er^d$ touches $E_{v,t}$ if it touches one of the two halfspaces with boundary $E_{v,t}$.

\begin{lemma}  \label{CL1}
Let $f$ be a nondegenerate d.c.\ aura in $\er^d$. Then, for $\H^d$-almost all $(v,t)\in\sph\times\er$, the restriction $f|E_{v,t}$ is a nondegenerate d.c.\ aura in $E_{v,t}$ and 
$$N(f|E_{v,t},0)=(-1)^d(f_v)_{\#}\langle N(f,0),g_v,t\rangle.$$
\end{lemma}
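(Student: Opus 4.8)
The idea is to localize the statement to parallel sets and use the commutation of the slice operation with pushforwards. First I would recall that, by Proposition~\ref{P_aura}(3), $N(f,0)=(F)\esslim_{r\to 0_+}N(f,r)$ with $N(f,r)=\nu_{\#}\langle[df],\pi_0\circ f,r\rangle$, and that the same holds for the restriction: $N(f|E_{v,t},0)=(F)\esslim_{r\to 0_+}N(f|E_{v,t},r)$, computed relative to $E_{v,t}$. So it suffices to identify the slice $\langle[df],g_v\circ\pi_0,t\rangle$ — equivalently, the $(d-1)$-current living over $E_{v,t}$ obtained by slicing $[df]$ with the function $x\mapsto x\cdot v$ — with the gradient current $[d(f|E_{v,t})]$ of the restriction, pushed forward appropriately and up to the sign $(-1)^d$.

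The key step is therefore the Monge-Amp\`ere identity
$$\langle[df],g_v,t\rangle = (-1)^d (h_v)_{\#}^{-1}\text{-type pushforward of }[d(f|E_{v,t})],$$
more precisely that the slice of $[df]$ by $g_v$ at level $t$ is, after applying $h_v$ (projecting the cotangent coordinate $n$ onto $v^\perp$), the gradient current $[d(f|E_{v,t})]$ in $E_{v,t}\times v^\perp$. This is verified by testing against forms: by Definition~\ref{MA}(iii) the action of $[d(f|E_{v,t})]$ on $\phi\cdot(\pi_0)^{\#}\Omega_{d-1}$ is $\int_{E_{v,t}}\phi(u,\nabla'(f|E_{v,t})(u))\,\H^{d-1}(du)$, where $\nabla'$ denotes the gradient within $E_{v,t}$; and $\nabla'(f|E_{v,t})(u)=p_{v^\perp}\nabla f(u)$ at $\H^{d-1}$-a.e.\ $u\in E_{v,t}$, for $\H^d$-a.a.\ $(v,t)$ (this uses a Fubini-type argument: for a.e.\ line direction the restriction of a Lipschitz function is differentiable a.e.\ with the expected gradient, applied on the family of parallel hyperplanes). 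On the other side, the defining property of slices (Federer \S4.3) together with Definition~\ref{MA}(iii) for $[df]$ and the coarea/Fubini structure of $(\pi_0)^{\#}\Omega_d = \pm (g_v)^{\#}dt\wedge(\pi_0)^{\#}(v\lrc\Omega_d)$ gives that $\langle[df],g_v,t\rangle$ acts on $\psi\cdot(\pi_0)^{\#}(v\lrc\Omega_d)$ by $\int_{E_{v,t}}\psi(u,\nabla f(u))\,\H^{d-1}(du)$, for a.a.\ $t$. Matching the two, and noting that $h_v$ only changes the $n$-coordinate from $\nabla f(u)$ to $p_{v^\perp}\nabla f(u)=\nabla'(f|E_{v,t})(u)$, identifies the currents; the sign $(-1)^d$ comes from comparing the orientation $v\lrc\Omega_d$ assigned to $E_{v,t}$ with the orientation induced on the slice by the convention in the definition (the $(d-1)$-form $v\lrc\Omega_d$ versus $dt\wedge(v\lrc\Omega_d)=\pm\Omega_d$). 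One also needs that the Lagrangian and properness properties (Definition~\ref{MA}(i),(ii)) are inherited, which is straightforward: $h_v$ is linear on fibers and $g_v$-slices of a Lagrangian cycle are Lagrangian in the reduced symplectic space.

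Having the gradient-current identity at level $r$ for all small $r$, I would then pass $\nu_{\#}$ through the slice: since $\nu$ and $h_v$ commute up to normalization in the way recorded by $f_v=\nu\circ h_v$, and since $\nu_{\#}$ commutes with flat limits, applying $(f_v)_{\#}$ to $\langle N(f,0),g_v,t\rangle$ yields $N(f|E_{v,t},0)$ up to the sign $(-1)^d$ already isolated. Finally I would invoke Proposition~\ref{P_th} / Lemma~\ref{nulovamira} (the set of hyperplanes tangent to $f$ has $\H^d$-measure zero) to guarantee that for $\H^d$-a.a.\ $(v,t)$ the restriction $f|E_{v,t}$ is again a \emph{nondegenerate} d.c.\ aura in $E_{v,t}$ — this is exactly the ``$E$ not tangent to $f$ $\Rightarrow$ $f|E$ nondegenerate'' observation used already in the proof of Lemma~\ref{nulovamira} — so that all the objects above are defined. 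The main obstacle is the slicing identity for the gradient current: keeping track of the orientations and of the almost-everywhere coincidence of $\nabla'(f|E_{v,t})$ with $p_{v^\perp}\nabla f$ across the whole parameter family $(v,t)$ simultaneously (rather than for a fixed hyperplane) is the delicate point, and is handled by a two-step Fubini argument on $\sph\times\er\times\er^d$.
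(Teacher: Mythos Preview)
Your key step---the slicing identity $[d(f|E_{v,t})]=(h_v)_{\#}\langle[df],g_v,t\rangle$ for the gradient currents, verified through Definition~\ref{MA}(iii) and the relation $\nabla(f|E_{v,t})=p_{v^\perp}\nabla f$ almost everywhere---is exactly the identity the paper establishes, and your handling of nondegeneracy via Proposition~\ref{P_th} (the ``$E$ not tangent to $f$'' observation) matches the paper as well.

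Where your route diverges, and where the gap lies, is in the passage from this gradient-current identity to the statement for $N(\cdot,0)$. You invoke Proposition~\ref{P_aura}(c) and plan to work through the parallel currents $N(f,r)$, then pass to the flat essential limit. But this requires $\langle N(f,0),g_v,t\rangle=(F)\esslim_{r\to0_+}\langle N(f,r),g_v,t\rangle$, i.e.\ that slicing by $g_v$ commutes with the flat limit; slicing is \emph{not} continuous for flat convergence in general, and you give no argument for it here. The paper avoids this entirely: it never uses the limit description, but substitutes the gradient-current identity directly into the \emph{definition}
\[
N(f|E_{v,t},0)=\nu_{\#}\Big(-\partial\big([d(f|E_{v,t})]\llc\pi_0^{-1}(U\setminus A)\big)\llc\pi_0^{-1}(U)\Big)
\]
and then commutes $\partial$ with the slice $\langle\,\cdot\,,g_v,t\rangle$, which \emph{is} a standard property of slices (Federer \S4.3). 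That boundary--slice commutation is also where the sign $(-1)^d$ actually enters, rather than from the volume-form comparison you describe (note that $dt\wedge(v\lrc\Omega_d)=v^*\wedge(v\lrc\Omega_d)=\Omega_d$ carries no sign). If you replace your limit argument by this direct boundary computation, the proof goes through and coincides with the paper's.
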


\begin{proof}
We know from Proposition~\ref{P_th} that $f$ does not touch $E_{v,t}$ for $\H^d$-almost all $(v,t)$. If this is the case and $\nabla f(x)$ exists at some $x\in E_{v,t}$ then $\nabla (f|E_{v,t})(x)=p_{v^\perp}\nabla f(x)$ and it is not difficult to verify from the definition the relation
$$[d(f|E_{v,t})]=(h_v)_{\#}\langle [df],g_v,t\rangle.$$
Let $U$ be an open neighbourhood of $A=f^{-1}\{0\}$ in $\er^d$ as in Definition~\ref{aura}. If $f$ does not touch $E_{v,t}$ then $U\cap E_{v,t}$ is an open neighbourhood of $(f|E_{v,t})^{-1}\{0\}$ guaranteeing the nondegeneracy of $f|E_{v,t}$ and we have 
\begin{eqnarray*}
N(f|E_{v,t},0)
&=&\nu_{\#}\Big( -\partial([d(f|E_{v,t})]\llc\pi_0^{-1}(U\setminus A))\llc\pi_0^{-1}(U)\Big)\\
&=&-\nu_{\#}\Big( \partial((h_v)_{\#}\langle [df],g_v,t\rangle\llc\pi_0^{-1}(U\setminus A))\llc\pi_0^{-1}(U)\Big)\\
&=&-(\nu\circ h_v)_{\#}\Big(\partial\langle [df],g_v,t\rangle\llc\pi_0^{-1}(U\setminus A)\Big)\llc\pi_0^{-1}(U)\\
&=&-(f_v)_{\#}\Big((-1)^d\langle\partial([df]\llc\pi_0^{-1}(U\setminus A)),g_v,t\rangle\llc\pi_0^{-1}(U)\Big)\\
&=&(-1)^d(f_v)_{\#}\nu_{\#}(-\langle\partial([df]\llc\pi_0^{-1}(U\setminus A))\llc\pi_0^{-1}(U),g_v,t\rangle)\\
&=&(-1)^d(f_v)_{\#}\langle N(f,0),g_v,t\rangle,
\end{eqnarray*}
which proves the assertion. We have used the basic properties of slices from \cite[\S4.3]{Fe69}.
\end{proof}

Let $\varphi_k^{(v)}$ be the $k$th Lipschitz-Killing differential form in $E_{v,t}$.

\begin{lemma}  \label{CL2}
For any $(x,n)\in\er^d\times\sph$ and $k=0,\ldots,d-2$,
$$\int_{\sph}f_v^{\#}(g_v^{\#}\Omega_1\wedge\varphi_k^{(v)})(x,n)\, \H^{d-1}(dv)=(-1)^d \frac{2\pi^{d/2}\Gamma\left(\frac {k+2}2\right)}{\Gamma\left(\frac{k+1}2\right)\Gamma\left(\frac{d+1}2\right)}\varphi_{k+1}(x,n).$$
\end{lemma}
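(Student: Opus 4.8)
The plan is to reduce this to a pointwise identity about alternating multilinear forms and then to a one-dimensional integral over the sphere. The left-hand side is, for fixed $(x,n)$, an integral over $v\in\sph$ of a $(d-1)$-covector obtained by pulling back the wedge of $g_v^\#\Omega_1$ (a $1$-form picking out the $v$-direction of the spatial component) with $\varphi_k^{(v)}$ (the $k$th Lipschitz--Killing form computed intrinsically in the hyperplane $v^\perp$). First I would unwind all three pullbacks $f_v=\nu\circ h_v$, $h_v(x,n)=(x,p_{v^\perp}n)$, and $g_v(x,n)=x\cdot v$ explicitly on a test $(d-1)$-vector $a^1\wedge\cdots\wedge a^{d-1}$ with $a^i=(b^i,c^i)\in\er^d\times\er^d$. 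The factor $g_v^\#\Omega_1$ contributes $\langle b^i,v\rangle$ in one slot; the remaining $\varphi_k^{(v)}$ contributes, by the defining formula for Lipschitz--Killing forms (now with ambient volume form $v\lrc\Omega_d$ on $v^\perp$ and outer normal the normalized projection $p_{v^\perp}n/|p_{v^\perp}n|$), a sum over $0/1$-sequences $\sigma$ with $\sum\sigma=d-2-k$ of wedges of projected $b$'s and $c$'s together with that normal. The normalization constants $|p_{v^\perp}n|$ coming from $\nu$ should cancel against an explicit power appearing when one rewrites everything back in terms of $n$ rather than $p_{v^\perp}n$, and I expect the net effect to be an integrand of the form $\langle b^{i},v\rangle$ times $(d-2-k)$-fold minors times $\langle v\wedge(\text{stuff})\wedge n,\Omega_d\rangle$.

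The second step is to integrate this over $v\in\sph$. After the unwinding, the dependence on $v$ is: one linear factor $v\cdot b^i$, one occurrence of $v$ inside a $d$-fold determinant $\langle\cdots\wedge v\wedge\cdots\wedge n,\Omega_d\rangle$, and the projections $p_{v^\perp}$ acting on the other vectors. The key computation is an integral of the type $\int_{\sph}(v\cdot y)\,(v\cdot z)\,F(p_{v^\perp}(\cdot))\,\H^{d-1}(dv)$; one expands $p_{v^\perp}w=w-(v\cdot w)v$, uses that odd moments of $v$ vanish by symmetry, and reduces everything to the standard moment integrals $\int_\sph v_iv_j\,d\H^{d-1}=\frac{\cO_{d-1}}{d}\delta_{ij}$ and $\int_\sph v_iv_jv_kv_l\,d\H^{d-1}=\frac{\cO_{d-1}}{d(d+2)}(\delta_{ij}\delta_{kl}+\delta_{ik}\delta_{jl}+\delta_{il}\delta_{jk})$. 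Collecting terms, one must recognize that the result is precisely $\varphi_{k+1}(x,n)$ up to a scalar: the multilinear structure matches because replacing a pair $(v\cdot b^i)\,p_{v^\perp}(\text{rest})\wedge v$ by its average over $v$ produces exactly the antisymmetrization that turns $d-2-k$ ``normal-type'' slots into $d-1-k$ of them, which is the combinatorial content of the defining sum for $\varphi_{k+1}$. The Gauss form case $k=0$ should be checked separately or folded in, but the statement only asks for $k\le d-2$ so $\varphi_{k+1}$ is always a genuine Lipschitz--Killing form.

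The third and final step is to pin down the constant. Rather than tracking every $\cO$-factor through the multilinear bookkeeping, I would evaluate both sides on a single convenient configuration: take $(x,n)$ with $n=e_d$ and the tangent directions $b^1,\dots,b^{d-1}$ an orthonormal basis of $e_d^\perp$ with all $c^i$ chosen to realize a round piece of sphere (principal curvatures all $1$), for which $\varphi_{k+1}$ evaluates to a known binomial-type constant (essentially $\binom{d-1}{k+1}\cO_{d-1}^{-1}$ times the surface element). Then compute the left-hand side on the same configuration using the moment integrals above; matching the two yields the claimed factor $(-1)^d\,2\pi^{d/2}\Gamma(\frac{k+2}2)/(\Gamma(\frac{k+1}2)\Gamma(\frac{d+1}2))$. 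The sign $(-1)^d$ is forced by the orientation convention $v\lrc\Omega_d$ on $E_{v,t}$ together with the slicing sign already isolated in Lemma~\ref{CL1}.

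\textbf{Main obstacle.} The hard part is the bookkeeping in the first two steps: correctly carrying the normalization factors $|p_{v^\perp}n|$ through the pullback by $\nu$ and verifying that the $v$-averaged integrand reassembles into the defining alternating sum of $\varphi_{k+1}$ with no leftover terms. This is a finite but delicate piece of multilinear algebra; the cleanest route is probably to fix an adapted orthonormal frame (so that $p_{v^\perp}$ becomes coordinate truncation), reduce to the moment integrals listed above, and then read off both the multilinear pattern and the scalar simultaneously by testing against the round-sphere configuration.
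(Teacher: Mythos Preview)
Your overall plan---fix an adapted frame, unwind the pullbacks on basis $(d-1)$-vectors, integrate over $v$, and read off the constant---is the same architecture the paper uses. But there is a genuine gap at the point where you assert that ``the normalization constants $|p_{v^\perp}n|$ coming from $\nu$ should cancel''. They do not. With a frame $a_1,\ldots,a_d$ adapted so that $a_d=n$, one finds $Df_v(x,n)(a,b)=(a,\tilde b)$ with $\tilde b=p_{v^\perp\cap n^\perp}b/|p_{v^\perp}n|$, so each of the $d-2-k$ fibre slots contributes a factor $|p_{v^\perp}n|^{-1}$, and the unit normal $p_{v^\perp}n/|p_{v^\perp}n|$ appearing in $\varphi_k^{(v)}$ contributes one more. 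After simplifying the wedge (replacing $p_{v^\perp}n$ by $n$ and $p_{v^\perp\cap n^\perp}a_j$ by $a_j$, which is legitimate because $v$ and $n$ are already present), a factor $|p_{v^\perp}n|^{k+1-d}$ survives. Since $k\leq d-2$ this exponent is strictly negative, so the integrand is \emph{not} polynomial in $v$.

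Consequently your second step, reducing to the polynomial moment integrals $\int_{\sph}v_iv_j$ and $\int_{\sph}v_iv_jv_kv_l$, cannot work: the integral one actually has to compute is
\[
\int_{\sph}|p_{v^\perp}n|^{\,k+1-d}\,(a_i\cdot v)^2\,\H^{d-1}(dv)
=\int_{\sph}\bigl(1-(n\cdot v)^2\bigr)^{(k+1-d)/2}(a_i\cdot v)^2\,\H^{d-1}(dv),
\]
a Beta/Gamma computation that produces exactly the $\Gamma$-factors in the statement. This is also why the scheme of fixing the constant afterwards by evaluating on a round-sphere configuration is unnecessary: once the integrand is correctly identified, the constant comes out of this single spherical integral together with the comparison of $\cO_{d-2-k}^{-1}$ (from $\varphi_k^{(v)}$) against $\cO_{d-2-k}^{-1}$ (from $\varphi_{k+1}$). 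The paper carries this out by testing both sides on the basis vectors $a_{I,J}=\bigwedge_{i\in I}(a_i,0)\wedge\bigwedge_{j\in J}(0,a_j)$; nonzero contributions occur only for $|I|=k+1$, and then both sides equal a common sign $\sigma_{I,J}$ times the explicit numerical factor.
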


\begin{proof}
Let $\{a_1,\ldots,a_d\}$ be a positively oriented orthonormal basis of $\er^d$ with $a_d=n$. The vectors
$$a_{I,J}:=\bigwedge_{i\in I}(a_i,0)\wedge\bigwedge_{j\in J}(0,a_j),\quad I,J\subset\{1,\ldots,d\},\, |I|+|J|=d-1,$$
form a basis of $\bigwedge_{d-1}\er^{2d}$ ($|I|,|J|$ denote the cardinality of $I,J$, respectively). It is thus sufficient to verify the equality of the two forms on the basis vectors.

Let $I,J\subset\{1,\ldots.d\}$ with $|I|+|J|=d-1$ be given and denote
$$\sigma_{I,J}:=\left\langle\bigwedge_{i\in I}a_i\wedge\bigwedge_{j\in J}a_i\wedge n\right\rangle\in\{-1,0,1\}.$$
We have
\begin{eqnarray*}
I(v)&:=&\left\langle a_{I,J},f_v^{\#}(g_v^{\#}\Omega_1\wedge\varphi_k^{(v)})(x,n)\right\rangle\\
&=&\left\langle\bigwedge_{i\in I}Df_v(x,n)(a_i,0)\wedge\bigwedge_{j\in J}Df_v(x,n)(0,a_j),(g_v^{\#}\Omega_1\wedge\varphi_k^{(v)})(f_v(x,n))\right\rangle.
\end{eqnarray*}
An elementary calculation yields that $Dg_v(x,n)(a,b)=a\cdot v$ and $Df_v(x,n)(a,b)=(a,\tilde{b})$ with $\tilde{b}=p_{v^\perp\cap n^\perp}b/|p_{v^\perp}n|$. Thus
\begin{eqnarray*}
I(v)&=&\left\langle \bigwedge_{i\in I}(a_i,0)\wedge\bigwedge_{j\in J}(0,\tilde{a}_j),(g_v^{\#}\Omega_1\wedge\varphi_k^{(v)})(f_v(x,n))\right\rangle\\
&=&\sum_{i\in I}(-1)^{\sigma_I(i)-1}(a_i\cdot v)
\left\langle\bigwedge_{j\in I,j\neq i}(a_i,0)\wedge\bigwedge_{j\in J}(0,\tilde{a}_j),\varphi_k^{(v)}(f_v(x,n))\right\rangle,
\end{eqnarray*}
where $\sigma_I(i)$ is the order number of $i$ in $I$.
From the definition of the differential form $\varphi_k^{(v)}$, the last expression vanishes unless $|I|=k+1$. Assume that this is the case; then we have
\begin{eqnarray*}
I(v)&=&\cO_{d-2-k}^{-1}\sum_{i\in I}(-1)^{\sigma_I(i)-1}(a_i\cdot v)
\left\langle\bigwedge_{j\in I,j\neq i}a_i\wedge\bigwedge_{j\in J}\tilde{a}_j\wedge \frac{p_{v^\perp}n}{|p_{v^\perp}n|}, v\lrc\Omega_d\right\rangle\\
&=&\cO_{d-2-k}^{-1}\sum_{i\in I}(-1)^{\sigma_I(i)-1}(a_i\cdot v) |p_{v^\perp}n|^{k+1-d}\\
&& \hspace{3cm}\times
\left\langle\bigwedge_{j\in I,j\neq i}a_i\wedge\bigwedge_{j\in J}p_{v^\perp\cap n^\perp}a_j\wedge p_{v^\perp}n\wedge v, \Omega_d\right\rangle\\
&=&\cO_{d-2-k}^{-1}\sum_{i\in I}(-1)^{\sigma_I(i)-1}(a_i\cdot v) |p_{v^\perp}n|^{k+1-d}\\
&& \hspace{3cm}\times\left\langle\bigwedge_{j\in I,j\neq i}a_i\wedge\bigwedge_{j\in J}a_j\wedge n\wedge v, \Omega_d\right\rangle\\
&=&\cO_{d-2-k}^{-1}\sum_{i\in I}(-1)^{\sigma_I(i)-1}(a_i\cdot v) |p_{v^\perp}n|^{k+1-d}
(-1)^{d-\sigma_I(i)-1}(a_i\cdot v)\sigma_{I,J}\\
&=&\cO_{d-2-k}(-1)^d(k+1)|p_{v^\perp}n|^{k+1-d} (a_i\cdot v)^2 \sigma_{I,J}.
\end{eqnarray*}
A routine calculation verifies that
$$\int_{\sph} |p_{v^\perp}n|^{k+1-d} (a_i\cdot v)^2\, \H^{d-1}(dv)=\frac{\pi^{d/2}\Gamma\left(\frac {k+2}2\right)}{\Gamma\left(\frac{k+3}2\right)\Gamma\left(\frac{d+1}2\right)}.$$
Clearly, also
$\langle a_{I,J},\varphi_{k+1}(x,n)\rangle =\cO_{d-2-k}^{-1}\sigma_{I,J}$ 
if $|I|=k+1$ and $0$ otherwise, which completes the proof.
\end{proof}

\begin{proof}[Proof of Theorem~\ref{T3}]
Let $f$ be a nondegenerate d.c.\ aura with $f^{-1}\{0\}=A$.
If $k=m$ then the formula follows easily from the Fubini theorem.
If $k<m=d-1$ then we have using Lemma~\ref{CL1} and Lemma~\ref{CL2},
\begin{eqnarray*}
\lefteqn{\int_{\A^d_{d-1}}C_k(A\cap E)\,\mu^d_{d-1}(dE)}\\
&=&\cO_{d-1}^{-1}\int_{\sph}\int_{\er}N(f|E_{v,t})(\varphi^{(v)}_k)\, dt\,\H^{d-1}(dv)\\
&=&\cO_{d-1}^{-1}(-1)^d\int_{\sph}\int_{\er}(f_v)_{\#}\langle N(f,0),g_v,t\rangle (\varphi^{(v)}_k)\, dt\,\H^{d-1}(dv)\\
&=&\cO_{d-1}^{-1}(-1)^d\int_{\sph}N(f,0)(f_v^{\#}(g_v^{\#}\Omega_1\wedge\varphi^{(v)}_{k}))\,\H^{d-1}(dv)\\
&=&\beta^d_{k+1,d-1}N(f,0)(\varphi_{k+1})\\
&=&\beta^d_{k+1,d-1}C_{k+1}(A).
\end{eqnarray*}
For general $m<d$ we use induction on $m$ and relation \eqref{int_geom}.
\end{proof}

\section{Open problems}

\begin{problem}
{\rm Is the unit normal bundle of a nondegenerate d.c.\ aura rectifiable? In particular, does the principal kinematic formula hold for pairs of WDC sets?
Note that the first part of the question would also solve a long time open problem of the rectifiability of the set of directions of line segments contained on the boundary of a convex body. }
\end{problem}

\begin{problem}\label{locwdc}
{\rm Is there a natural definition of the normal bundle of WDC sets not depending on the corresponding aura? Is, for instance, the sum of two nondegenerated d.c.\ auras of one WDC set again a nondegenerated aura of that set? Are the classes of compact locally WDC sets and compact WDC sets identical?}
\end{problem}

\begin{problem}
{\rm What can be said about relationship between WDC sets and other classes of sets admitting the normal cycle? For instance, what is the relationship between d.c.\ domains and the Lipschitz submanifolds with bounded curvature investigated in \cite{RZ05}?}
\end{problem}

\section*{acknowledgement}
The first version of this paper contained a weaker result (concerning d.c.\ domains) proved by technically more complicated means. We are deeply grateful to Joseph Fu who suggested us to extend our result to WDC sets and to use the technique of auras. This surely helped to improve the quality of the paper significantly.

\end{document}